\newcommand{\ra}[1]{\renewcommand{\arraystretch}{#1}}
\titleformat{\chapter}[display]
{\normalfont\huge\bfseries}{\chaptertitlename\\thechapter}{20pt}{\Huge}
\titleformat{\subsubsection}[runin]
{\normalfont\normalsize\bfseries}{\thesubsubsection}{1em}{}
\titleformat{\paragraph}[runin]
{\normalfont\normalsize\bfseries}{\theparagraph}{1em}{}
\titleformat{\subparagraph}[runin]
{\normalfont\normalsize\bfseries}{\thesubparagraph}{1em}{}
\titlespacing*{\chapter} {0pt}{50pt}{40pt}
\titlespacing*{\section} {0pt}{3.5ex plus 1ex minus .2ex}{2.3ex plus .2ex}
\titlespacing*{\subsection} {0pt}{3.25ex plus 1ex minus .2ex}{1.5ex plus .2ex}
\titlespacing*{\subsubsection}{0pt}{3.25ex plus 1ex minus .2ex}{1.5ex plus .2ex}
\titlespacing*{\paragraph} {0pt}{3.25ex plus 1ex minus .2ex}{1em}
\titlespacing*{\subparagraph} {\parindent}{3.25ex plus 1ex minus .2ex}{1em}
\subjclass[2010]{Primary 14R15; secondary 13F20}
\keywords{Jacobian conjecture}
\newtheorem{theorem}{Theorem}[section]
\newtheorem{proposition}[theorem]{Proposition}
\newtheorem{corollary}[theorem]{Corollary}
\theoremstyle{definition}
\theoremstyle{remark}
\newtheorem{remark}[theorem]{Remark}
\DeclareMathOperator{\Aut}{Aut}
\DeclareMathOperator{\en}{en}
\DeclareMathOperator{\st}{st}
\DeclareMathOperator{\Succ}{Succ}
\DeclareMathOperator{\Pred}{Pred}
\DeclareMathOperator{\dir}{dir}
\DeclareMathOperator{\ord}{ord}
\newcommand{\ov}{\overline}
\DeclareMathOperator{\enF}{enF}
\begin{document}

\title{Increasing the degree of a possible counterexample to the Jacobian Conjecture from 100 to 108}

\author[Jorge A. Guccione]{Jorge A. Guccione$^{1,}$$^2$}
\address{$^1$ Universidad de Buenos Aires. Facultad de Ciencias Exactas y Naturales. Departamento de Matem\'atica. Buenos Aires. Argentina}
\address{$^2$ CONICET-Universidad de Buenos Aires. Instituto de Investigaciones Matem\'aticas ``Luis A. santal\'o'' (IMAS). Buenos Aires. Argentina}
\email{vander@dm.uba.ar}

\author[Juan J. Guccione]{Juan J. Guccione$^{1,}$$^3$}
\address{$^3$ CONICET. Instituto Argentino de Matem\'atica (IAM). Buenos Aires. Argentina}
\email{jjgucci@dm.uba.ar}

\thanks{Jorge A. Guccione and Juan J. Guccione were supported by UBACyT 20020150100153BA (UBA) and PIP 11220110100800CO (CONICET)}

\author[Rodrigo Horruitiner]{Rodrigo Horruitiner$^{4}$}
\address{$^4$Cornell University, Department of Mathematics, 310 Malott Hall, Cornell University, Ithaca, New York, USA.}
\email{rmh322@cornell.edu}

\author[Christian Valqui]{Christian Valqui$^{5,}$$^6$}
\address{$^5$Pontificia Universidad Cat\'olica del Per\'u, Secci\'on Matem\'aticas, PUCP, Av. Universitaria 1801, San Miguel, Lima 32, Per\'u.}

\address{$^6$Instituto de Matem\'atica y Ciencias Afines (IMCA) Calle Los Bi\'ologos 245. Urb San C\'esar.
La Molina, Lima 12, Per\'u.}
\email{cvalqui@pucp.edu.pe}
\thanks{This work was supported by CONCYTEC-FONDECYT within the framework of the contest ``Proyectos de Investigaci\'on B\'asica 2020-01'' [contract number 120-2020-FONDECYT]}

\begin{abstract}
We list all the pairs  $(\deg(P),\deg(Q))$ with $\max\{\deg(P),\deg(Q)\}< 125$ for any hypothetical counterexample to the plane Jacobian Conjecture and discard them all, except the pair $(72,108)$ (and the symmetric pair $(108,72)$), thus we confirm the lower bound of 100 obtained by Moh and raise it up to 108.
\end{abstract}

\maketitle

\setcounter{tocdepth}{2}
\tableofcontents

\section{Introduction}
The plane Jacobian Conjecture (see~\cite{K}) can be rephrased in the following manner:

If a pair of polynomials $(P,Q)$ in $K[x,y]$ satisfies $[P,Q]:= P_x Q_y-P_y Q_x\in K^{\times}$, then there exists a
polynomial inverse to the morphism $(P,Q):K^2\to K^2$.

In~\cite{M} the author discards all possible counterexamples with $\max\{\deg(P),\deg(Q)\}\le 100$ using approximate roots of
polynomials in two variables, but provides a detailed proof only for the smallest case.
In~\cite{GGV1} the shape of the support of possible counterexamples to the Jacobian Conjecture in dimension 2 is described in great detail.
Based on its results, and some results of~\cite{GGV2},
in~\cite{GGV5} a list of possible corners of small counterexample is given. We will adapt the
techniques of~\cite{GGV1}, \cite{GGV2} and~\cite{GGV3} in order to
discard nearly all the cases with $\max\{\deg(P),\deg(Q)\}<125$, which haven't been discarded previously. The only exception is
the case $(\deg P,\deg Q)=(72,108)$, and so, if one manages to discard this case, it would increase the lower bound from $108$ up to $125$.

The article is organized as follows. In section~\ref{seccion 1} we list all the cases with $\max\{\deg(P),\deg(Q)\}<125$, following~\cite{GGV5}.
There are 10 cases, from which we consider 5 to be discarded already by previous work of different authors. If we assume
$\deg(P)<\deg(Q)$, then the remaining 5 cases satisfy
$$
(\deg(P),\deg(Q))\in\{(56,84),(66,99),(72,108),(80,120)\},
$$
where there are two cases with $(\deg(P),\deg(Q))=(72,108)$. In section~\ref{seccion 2} we first discard the case
$\max\{\deg(P),\deg(Q)\}=120$, as a straightforward application of one of our previous results in~\cite{GGV2}.
 Then we discard $\max\{\deg(P),\deg(Q)\}=84$
using an inequality for the intersection number proved in~\cite{GGV6}.
In section~\ref{seccion 3} we apply some transformation of $K[x,y]$ introduced in~\cite{GGV1} in order to reduce the size
of the support of $P$ and $Q$ in all cases except in the discarded case $(\deg(P),\deg(Q))=(80,120)$.
In section~\ref{seccion 4} we use the systems of polynomial equations associated to a possible counterexample as in~\cite{GGV3} in order
to discard the case $(\deg(P),\deg(Q))=(66,99)$ and one of the cases with $(\deg(P),\deg(Q))=(72,108)$. For the other case
with $(\deg(P),\deg(Q))=(72,108)$ we couldn't solve the corresponding system of polynomial equations, thus it is left open.

In section~\ref{seccion 5} we provide another
proof of the case $(\deg(P),\deg(Q))=(56,84)$, without using the machinery of intersection numbers.

We want to emphasize that all the techniques developed in each of the articles~\cite{GGV2}, \cite{GGV3} and~\cite{GGV5}
are very useful in discarding some families of possible counterexamples. However, the Jacobian conjecture is a very hard problem,
so we have to combine these techniques in order to be able to increase the lower bound of 100 for $\max\{\deg(P),\deg(Q)\}$
established in 1983 by Moh in~\cite{M}, up to 108. With enough computing power we would be able to raise it up from 108 to 125,
since there is only one case left.

Since this article continues the work in~\cite{GGV1}, ~\cite{GGV2}, \cite{GGV3} and~\cite{GGV5}, we will use the notations and conventions established in these articles.

\section{Complete list of small cases}
\label{seccion 1}
Based on the tables obtained in sections~5 and~6 of~\cite{GGV5}, we begin with the study of the cases with $\max\{\deg(P),\deg(Q)\} < 125$.
The aim is to prove the following result:

\begin{theorem}
	If $(P,Q)$ is a counterexample to the Jacobian Conjecture, then we have either $\max\{\deg(P),\deg(Q)\} \geq 125$, or
$(\deg(P),\deg(Q))\in\{(72,108),(108,72)\}$.
\end{theorem}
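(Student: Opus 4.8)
The plan is to follow the roadmap laid out in the introduction and reduce the problem to finitely many explicit cases, then eliminate all but one of them. First I would invoke the results of~\cite{GGV5}: by the tables in sections~5 and~6 there, every hypothetical counterexample $(P,Q)$ with $\max\{\deg(P),\deg(Q)\}<125$ has $(\deg(P),\deg(Q))$ lying in an explicit list of $10$ pairs (up to the symmetry exchanging $P$ and $Q$). Five of these are already handled by prior work of Moh~\cite{M} and others, so it suffices to deal with the remaining five, which after assuming $\deg(P)<\deg(Q)$ are $(56,84)$, $(66,99)$, $(72,108)$ (appearing with two distinct corner configurations), and $(80,120)$. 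The whole theorem then amounts to discarding each of these except one of the two $(72,108)$ configurations.

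The case-by-case elimination proceeds with a different tool for each family. For $\max\{\deg(P),\deg(Q)\}=120$, i.e.\ the pair $(80,120)$, I would apply the result from~\cite{GGV2} directly: the shape of the support forces a contradiction, so this case falls immediately. For $(56,84)$, I would use the lower bound on the intersection number proved in~\cite{GGV6}, which is incompatible with the numerical data $(\deg(P),\deg(Q))=(56,84)$ (and section~\ref{seccion 5} gives an independent argument avoiding intersection numbers, as a cross-check). For the two remaining honest cases, $(66,99)$ and the ``solvable'' $(72,108)$ configuration, the plan is to apply the transformations of $K[x,y]$ from~\cite{GGV1} to shrink the supports of $P$ and $Q$, and then translate the Jacobian condition $[P,Q]\in K^{\times}$ into an explicit system of polynomial equations in the coefficients, as in~\cite{GGV3}; showing this system has no solution discards the case. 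The output is the statement as claimed, with the single surviving pair $(72,108)$ (and its mirror $(108,72)$) corresponding to the one $(72,108)$ corner configuration whose polynomial system we cannot currently decide.

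The main obstacle is the last step: solving — or proving unsolvable — the polynomial systems attached to $(66,99)$ and to the two $(72,108)$ configurations. These systems are large and, a priori, beyond routine hand computation; the reason $108$ rather than $125$ is the final bound is precisely that for one of the $(72,108)$ configurations the associated system resists the elimination techniques (and the available computing power), so it is left open rather than resolved. The supporting difficulty is bookkeeping: one must correctly enumerate the corners and admissible chains coming from~\cite{GGV5}, verify that exactly the claimed five cases remain after accounting for previously discarded ones, and apply the support-reduction transformations of~\cite{GGV1} without losing or duplicating configurations. Once these systems are set up correctly and shown to be inconsistent (except for the one open configuration), the theorem follows by assembling the individual case results.
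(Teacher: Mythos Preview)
Your proposal is correct and follows essentially the same route as the paper: enumerate the ten cases from~\cite{GGV5}, remove the five already handled in the literature, then eliminate $(80,120)$ via the last-corner argument of~\cite{GGV2}, $(56,84)$ via the intersection-number inequality of~\cite{GGV6} (with an alternative in section~\ref{seccion 5}), and $(66,99)$ together with one $(72,108)$ configuration via the support-reduction of section~\ref{seccion 3} and the polynomial systems of section~\ref{seccion 4}, leaving the other $(72,108)$ configuration open. The only refinement worth noting is that the paper's treatment of the polynomial systems is not a brute-force elimination but goes through the auxiliary square-root series $C$ and the rescaled coefficients $D_k$, which is what makes the $(66,99)$ and one $(72,108)$ system tractable by hand plus CAS.
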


Following~\cite{GGV5}, we take the smallest members of the families
$F_1,F_2,F_3,F_9,F_{17},F_{22}$ in section 5 of~\cite{GGV5} and three additional cases in the tables of section~6 of~\cite{GGV5}.
The following table presents the 10 resulting cases.
 Five of them have been shown to be impossible in various cited articles:
In~\cite{M} the cases with $\max\{\deg(P),\deg(Q)\}\in \{64,75,84,99\}$ have been considered, but only in the case 64 a complete proof is given.
 This case was also computed by Heitman in~\cite{H}. In~\cite{GGV4} this case and the case $(\deg P,\deg Q)=(80,112)$ have been discarded.
The case $\max\{\deg(P),\deg(Q)\}=75$ of Moh in~\cite{M}, corresponds to two cases in two different families in~\cite{GGV5}*{section 5},
and has been discarded in~\cite{GGV3}*{section 5}.
The case $\max\{\deg(P),\deg(Q)\}=120$ can be discarded by a straightforward argument mentioned in~\cite{GGV2}*{Remark 3.31},
which we will describe in detail in the next section. In that section we will discard also
the case $\max\{\deg(P),\deg(Q)\}=84$ using~\cite{GGV6}*{Theorem 7.3}.
\begin{center}
\ra{1.2}
\begin{tabular}[t]{clcc}
\toprule
$A_0$ & $( m,n)$& $\max\{\deg(P),\deg(Q)\}$ & \text{Discarded?} \\
\midrule
              $(4,12)$&{\color{red}(3,4)}&64 & \cite{GGV4}*{section 3.5},\cite{M}, \cite{H} \\
              $(4,12)$&{\color{red}(5,7)}&112 & \cite{GGV4}*{section 3.5} \\
              $(5,20)$&{\color{red}(2,3)}&75 & \cite{GGV3}*{section 5}, no detail in \cite{M} \\
              $(5,20)$&{\color{red}(3,2)}&75 & \cite{GGV3}*{section 5}, no detail in \cite{M} \\
              $(7,21)$&(2,3) & 84 & no detail in \cite{M} \\
              $(8,24)$&{\color{red}(2,3)}& 96 & \cite{GGV5}*{Proposition 6.1}\\
              $(8,28)$&*(3,2)& 108 & -\\
              $(8,32)$&(3,2)&120 & - \\
              $(9,24)$&(2,3) &99 & no detail in \cite{M} \\
              $(9,27)$&(2,3)&108 & -\\
\bottomrule
\end{tabular}
\end{center}
We highlight with red the cases that we consider solved. Four of the remaining five cases will be discarded in this paper. In section~\ref{seccion 2}
we will discard the cases $\max\{\deg(P),\deg(Q)\}=84$ and $\max\{\deg(P),\deg(Q)\}=120$. The case $\max\{\deg(P),\deg(Q)\}=84$
uses one of the main results of~\cite{GGV6}. In section~\ref{seccion 5} we offer another proof for this case.

\section{The cases $\max\{\deg(P),\deg(Q)\}=84$ and $\max\{\deg(P),\deg(Q)\}=120$}
\label{seccion 2}
We first discard the case $\max\{\deg(P),\deg(Q)\}=120$. This is the first case in the first table at page 28 of~\cite{GGV5}.

\begin{figure}[htb]
\centering
\begin{tikzpicture}[scale=0.6]
\draw[step=0.25cm,gray,very thin] (-0.03,-0.03) grid (3.3,8.3);
\draw [->] (0,0)--(8.5,0) node[anchor=north]{$x$};
\draw [->] (0,-0.3) --(0,9) node[anchor=east]{$y$};
\draw [red,thick,-] (1,6) -- (2,8)--(2,7)--(0.25,0);
\draw[blue] (2.5,8.5) node[fill=white, above=0pt, right=0pt]{ $A_0=(8,32)$};
\draw[blue](2.5,6.5) node[fill=white, above=0pt, right=0pt]{ $A_1=(8,28)$};

\fill[red]
(2,8) circle (3pt)
(2,7) circle (3pt)
(0.25,0) circle (3pt);

\draw (9.5,6.5) node[fill=white,right=2pt]{\Large $y\mapsto y+\lambda_1$};

\draw (9.5,5.5) node[fill=white,right=2pt]{\Large $x\mapsto x$};
\draw[->] (8,4) .. controls (11,5) .. (14,4);

\draw[step=0.25cm,gray,very thin] (15.97,-0.03) grid (19.3,8.3);
\draw [->] (16,0)--(24.5,0) node[anchor=north]{$x$};
\draw [->] (16,-0.3) --(16,9) node[anchor=east]{$y$};
\draw [red,thick,-] (17,6) -- (18,8)--(18,1)--(17.5,0.66);
\draw[blue] (18.5,7.5) node[fill=white, above=0pt, right=0pt]{ $A_0=(8,32)$};
\draw[blue](18.5,1.5) node[fill=white, above=0pt, right=0pt]{ $A_0'=(8,4)$};

\fill[red]
(18,8) circle (3pt)
(18,1) circle (3pt);
\end{tikzpicture}
\caption{Discarding $(8,32)$: $(8,4)$ is not a last possible corner}
\end{figure}
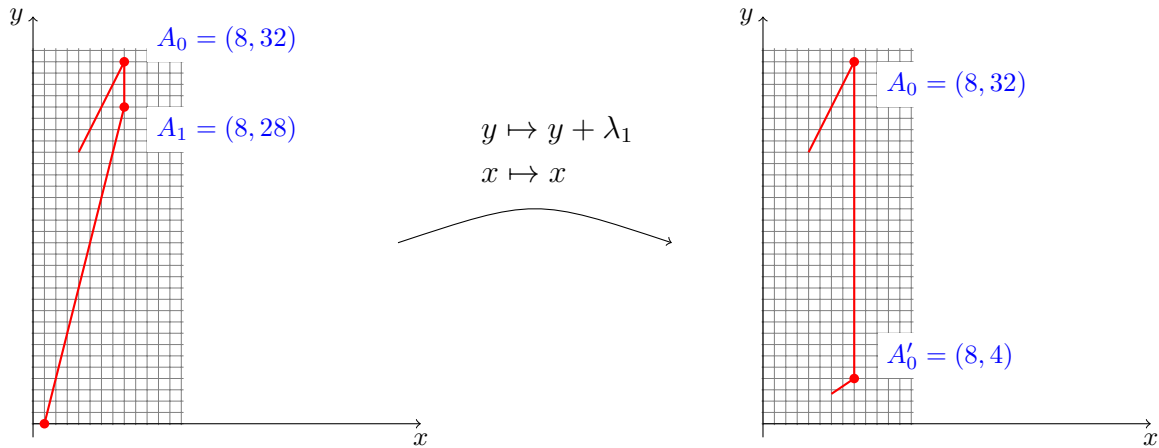

Here $A_0=(8,32)$, $A_1=(8,28)$ and $A_2=(11/4,7)$. Since
$v_{\rho_1,\sigma_1}(A_1)=v_{\rho_1,\sigma_1}(A_2)$ we have $(\rho_1,\sigma_1)=(4,-1)$, and so $\en_{4,-1}(F_1)=(6,21)$ (Note that $(6,21)$
is the only point of the form $\mu(8,28)$ with $v_{\rho_1,\sigma_1}(F_1)=v_{\rho_1,\sigma_1}(A_0)(1,1)=3$).  Since
$$
\en_{4,-1}(F_1)=\frac {p_1}{q_1} A_1=\frac 34 A_1
$$
we obtain that $q_1=4$, since by~\cite{GGV1}*{Theorem 7.6(5)} we know that $q_1=4|d_0$, where $d_0=\max\{d: \ell_{1,0}(P)=R^{md}\}$ (Note that
we use the order of~\cite{GGV5} on the directions, opposed to the order in~\cite{GGV1}). But $d_0\le 4$, since
$d_0\st_{1,0}(R)=\frac{1}{m}\st_{1,0}(P)=(8,28)=4(2,7)$ and $\gcd(2,7)=1$. Hence,
$$
\ell_{1,0}(P)=R^{4m}=(\lambda_P x^2 y^{7}(y-\lambda_1))^{4m},
$$
and if we apply the automorphism $\varphi$ given by $\varphi(x)= x$ and $\varphi(y)=y+\lambda_1$, then
we have $\ell_{1,0}(\varphi(P))= (\lambda_P x^2 y (y+\lambda_1)^{7})^{4m}$ and so $A_0'=(8,4)$ would be the last possible corner
of $\varphi(P)$ (See~\cite{GGV2}*{Definition 3.21}). But this is impossible
by~\cite{GGV2}*{Proposition 3.29}, which discards this case.

Now we discard the case $\max\{\deg(P),\deg(Q)\}=84$. We will use~\cite{GGV6}*{Theorem 7.3}. The present proof is very short, but uses
the machinery of~\cite{GGV6}. In the last section we will give a proof that is very similar to the proofs in the other cases.

We first compute $I_M$.
Remember that we are in the smallest case of the family $F_9$ in the table at page 25 of~\cite{GGV5}. So
$$
 A_0=(7,21),\quad A_0'= (1,0),\quad A_1= (11/7,2),\quad k=1,\quad m= j+2\quad\text{and}\quad n= 2j+3. \qquad
$$

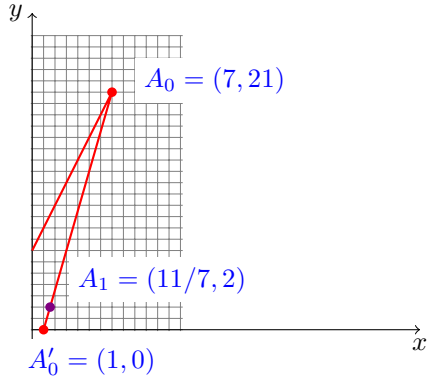
\begin{figure}[htb]
\centering
\begin{tikzpicture}[scale=0.6]
\draw[step=0.25cm,gray,very thin] (-0.03,-0.03) grid (3.3,6.5);
\draw [->] (0,0)--(8.5,0) node[anchor=north]{$x$};
\draw [->] (0,-0.3) --(0,7) node[anchor=east]{$y$};
\draw [red,thick,-] (0,1.75) -- (1.75,5.25)--(0.25,0);
\draw[blue] (2.25,5.5) node[fill=white, above=0pt, right=0pt]{ $A_0=(7,21)$};
\draw[blue](0.8,1.1) node[fill=white, above=0pt, right=0pt]{ $A_1=(11/7,2)$};
\draw[blue](-0.3,-0.7) node[fill=white, above=0pt, right=0pt]{ $A_0'=(1,0)$};

\fill[red]
(1.75,5.25) circle (3pt)
(0.25,0) circle (3pt);

\fill[violet]
(0.3929,0.5) circle (3pt);

\end{tikzpicture}
\caption{The case $\max\{\deg(P),\deg(Q)\}=84$: Family $F_9$.}
\end{figure}

Since $\en_{7,-2}(F)=(5,15)$, we have $F=xy \ov{f}(z^{7})$ for some $\ov{f}$ of degree 2, and so,
by~\cite{GGV1}*{Proposition 2.11(3)}, we have $\ell_{7,-2}(P)=x \ov{p}(z^7)$, where $\ov{p}$
is of degree $3m$ and has at most two different factors, with each factor with a multiplicity that is a multiple of $m$. Since $A_1=(11/7,2)$,
it has exactly two factors. Hence
$\ell_{7,-2}(P)=((z^7-\alpha_1)^2(z^7-\alpha_2))^m$, and so $\ell_{7,-2}(P)$ has 14 different factors. For 7 of them there is a final corner at
$A_1=(11/7,2)=(a/l,b)$,
and these yield the 7 major final $\pi$-roots of $P$. Moreover, for each of them, by~\cite{GGV6}*{Proposition 6.6(2)}, we
have $|D_{\tau}^{P_{\xi}}|=mb=2m$ and $\lambda_{\tau}^{Q}=1/7$, since $k=1$ and $l=7$.
Hence
$$
I_M=\sum_{\tau\in P_M} |D_{\tau}^{P_{\xi}}|\lambda_{\tau}^{Q}= 7 \cdot (2m)\frac 17=2(j+2).
$$
Since we consider the case $j=0$ we obtain $I_M=4$.

The other 7 factors yield seven final minor roots at $A_1=(9/7,1)$, and for each of them we have $\delta_{\tau}=-9/7$. Since $q_0=a=7$, a
standard argument (see~\cite{GGV6}*{Example 6.7})
 shows that there is only one final minor $\pi$-root $\tau$ corresponding to the 7m top minor roots of $P_{\xi}$
(see~\cite{GGV6}*{Proposition 6.4}). Moreover, one verifies that there exist
$\lambda_0$ and $\lambda_{-2}$ such that
$$
\tau=-x+\lambda_0+\lambda_{-2}x^{-2}+\pi x^{-3},
$$
and so $\delta_{\tau} = -3$. Consequently, by definition of $I_m$ (see~\cite{GGV6}*{Theorem 7.3}) we have
$$
I_m=1-\sum_{\tau\in P_m} (\delta_{\tau}+1)=1-(-3+1)-7(-9/7+1)=5.
$$
This contradicts the fact that, by~\cite{GGV6}*{Theorem 7.3}, we have $I_M\ge I_m$, and discards this case.

\section{Reducing the size of the Newton polygon}
\label{seccion 3}

In this section we apply some automorphisms reminiscent of the procedure in section 8 of the ArXiv version of~\cite{GGV1} (arXiv:1401.1784)
 to the Newton Polygons in order to greatly reduce their sizes.

\begin{proposition}[Case (9,27)]  \label{case927}
	If there is a counterexample to the Jacobian Conjecture in the case $(9,27)$, then there exist $P,Q \in L^{(1)}$ with $[P,Q] = x$ and
	\begin{align*}
	N(P) = \{(0,0),(1,1),(6,16), (6,18), (0,18)\} \\
	N(Q) = \{(0,0),(1,0),(9,24), (9,27), (0,27)\}
	\end{align*}
\end{proposition}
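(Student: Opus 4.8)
The plan is to feed into the argument the explicit combinatorial data of a hypothetical counterexample in the case $(9,27)$, which is the smallest member of the family $F_{22}$ of~\cite{GGV5}. From~\cite{GGV5} (together with~\cite{GGV1}) we obtain the corner $A_0=(9,27)$, the complete chain of corners $A_0,A_1,\dots,A_0'$ of the Newton polygon, the directions $(\rho_i,\sigma_i)$ attached to its edges, and the shape of each leading form $\ell_{\rho_i,\sigma_i}(P)$: up to a monomial it is a power of a polynomial in $z$ whose number of distinct factors is pinned down by the position of the neighbouring corner, exactly as in the computation carried out for the family $F_9$ in Section~\ref{seccion 2}. After passing to the ring $L^{(1)}$ of~\cite{GGV1}, the Jacobian hypothesis reads $[P,Q]=x$.

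Next I would shrink the Newton polygons by a finite sequence of automorphisms of $L^{(1)}$ of the type used in section~8 of the arXiv version of~\cite{GGV1}, namely $x\mapsto x$, $y\mapsto y+\lambda x^{a}$ (or the transposed move), possibly composed with rescaling a variable by a scalar. Each such automorphism has unit Jacobian, hence preserves the relation $[P,Q]=x$, and it clearly preserves membership in $L^{(1)}$. The key mechanism, already visible in the discarding of $(8,32)$ above where a single move $y\mapsto y+\lambda_1$ relocated a corner, is that whenever a boundary edge is governed by a leading form that is a monomial times a power of a single binomial $z^{\ell}-\alpha$, the corresponding move cancels that binomial and strictly reduces the polygon. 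In the case $(9,27)$ every edge except the principal one of slope $3$ — the direction attached to $A_1$ and $A_0'$ — is of this binomial type, so these edges can be eliminated one after another, while the slope-$3$ edge, which carries the genuine obstruction of the counterexample, cannot be flattened and survives. Once the constant term has been placed at $(0,0)$ and the two minimal monomials have been normalized (to $xy$ in $P$ and to $x$ in $Q$, which is forced by $[P,Q]=x$ and the slope-$1$ bottom edge of $N(P)$), one reads off that the remaining vertices are exactly $(0,0),(1,1),(6,16),(6,18),(0,18)$ for $P$ and $(0,0),(1,0),(9,24),(9,27),(0,27)$ for $Q$.

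The main obstacle is the bookkeeping that accompanies each reduction step: after substituting $y\mapsto y+(\text{polynomial in }x)$ into $P$ and $Q$ one must verify that no monomial is produced outside the two target pentagons and that all the asserted vertices are actually attained (non-degeneracy). This is handled by tracking the supports together with the explicit factorizations of the leading forms supplied by~\cite{GGV1} and~\cite{GGV5}; the observation that the slope-$3$ edges of $N(P)$ and of $N(Q)$ both point in the direction $(1,3)$ is what forces the two polygons to contract compatibly and to terminate with exactly the stated shapes. Recording that $P,Q$ remain in $L^{(1)}$ and that $[P,Q]=x$ is maintained throughout then completes the proof.
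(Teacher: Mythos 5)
Your proposal captures the overall strategy — iteratively apply $L^{(1)}$-automorphisms that cancel binomial leading forms to contract the Newton polygon — but it papers over the two places where the argument actually has content, and one of your explicit claims is false.

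First, the final polygons are not obtained by a sequence of elementary translations and rescalings: after the swap $\phi_1$ and the two translation moves $\phi_2:y\mapsto y+\alpha_1 x^{-2}$, $\phi_3:y\mapsto y+\alpha_2 x^{-3}$, the paper closes with the morphism $\varphi(x)=x^{-1}$, $\varphi(y)=x^{3}y$. This $\varphi$ is an automorphism of $L^{(1)}$ but not of $K[x,y]$, and it is not in your toolkit. It is also the step that produces the relation $[P,Q]=x$ in the conclusion: by the chain rule $[\varphi(P),\varphi(Q)]=-[P,Q]\,x$, so the original pair only satisfies $[P,Q]\in K^{\times}$ and the ``$=x$'' appears at the very end. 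Your sentence ``Each such automorphism has unit Jacobian, hence preserves the relation $[P,Q]=x$'' is therefore wrong on both counts: the starting relation is a nonzero constant, and the decisive final morphism has Jacobian $-x$, not $1$.

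Second, after $\phi_3$ shortens the edge through $(24,9)$ to end at $(21,8)$, one cannot simply ``read off'' the rest of the polygon. One must determine what happens on the remaining lower-left portion, and there is no binomial edge to cancel there. The paper does this by (i) listing the a priori possible positions $(a',b')$ of the next shared corner under the hypothesis $\en_{\rho_2,\sigma_2}(P)\sim\en_{\rho_2,\sigma_2}(Q)$ and discarding all but two of them via the divisibility constraint $21b'-8a'\mid 13\gcd(21-a',8-b')$, and then (ii) invoking~\cite{GGV1}*{Proposition 8.2} to force $\{\en_{\rho,\sigma}(P),\en_{\rho,\sigma}(Q)\}=\{(-k,0),(k+1,1)\}$ with $k=1$, which both fixes the bottom-left of the polygons and rules out the surviving $(a',b')$-cases by contradiction. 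This is the bulk of the proof and has no counterpart in your sketch. Your appeal to the parallelism of the slope-$3$ edges as the reason the polygons ``contract compatibly'' also inverts the logic: that parallelism is an output of the combinatorial constraints, not an input.

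So: the binomial-edge cancellations $\phi_2,\phi_3$ are correctly identified, but you are missing the final inversion morphism (and the attendant change in the Jacobian relation) and the entire bottom-left analysis via the divisibility table and Proposition~8.2 of~\cite{GGV1}. As written, the proposal asserts the conclusion rather than proves it.
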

\begin{proof}
	The corners of the polygons of $P$ and $Q$ are $\{(0,0),(1,0),(9,24),(9,27),(0,9)\}$ multiplied by $(m,n) = (2,3)$ respectively. The edge $\{(9,27), (0,9) \}$ is given by $y^9(xy^2 - \alpha_1)^9$, because $\enF = \frac{1}{9}(9,27)$ when looking at its corresponding direction, by ~\cite{GGV1}*{Corollary 7.4}. After applying the automorphism $\phi_1$ with $\phi_1(x) = y$ and $\phi_1(y) = x$ and then the automorphism $\phi_2$ with $\phi_2(x) = x$ and $\phi_2(y) = y + \alpha_1 x^{-2}$ we transform the corners of the polygons to $\{(0,0),(27,9),(24,9),(0,1),(-2,0)\}$, again multiplied by $(2,3)$ respectively. \\

\begin{figure}[htb]
\centering
\begin{tikzpicture}[scale=0.4]
\draw[step=0.5cm,gray,very thin] (-0.03,-0.03) grid (5.3,15.3);
\draw [->] (0,0)--(6.5,0) node[anchor=north]{$x$};
\draw [->] (0,-0.3) --(0,17) node[anchor=east]{$y$};
\draw [red,thick,-] (0,4.5) -- (4.5,13.5)--(4.5,12)--(0.5,0);
\draw[blue] (5,13.5) node[fill=white, above=0pt, right=0pt]{ $A_0=(9,27)$};
\draw[blue](5,12) node[fill=white, above=0pt, right=0pt]{ $A_1=(9,24)$};

\fill[red] (0,4.5) circle (3pt)
(4.5,13.5) circle (3pt)
(4.5,12) circle (3pt)
(0.5,0) circle (3pt);

\draw (6,7) node[fill=white,right=2pt]{\Large $y\mapsto x$};

\draw (6,8) node[fill=white,right=2pt]{\Large $x\mapsto y$};
\draw[->] (6,9) .. controls (9,9) .. (11.5,10);

\draw[step=0.5cm,gray,very thin] (12-0.03,10-0.03) grid (27.3,15.3);
\draw [->] (12,10)--(28.5,10) node[anchor=north]{$x$};
\draw [->] (12,10-0.3) --(12,17) node[anchor=east]{$y$};
\draw [red,thick,-] (12,10.5) -- (24,14.5)--(25.5,14.5)--(16.5,10);

\fill[red] (12,10.5) circle (3pt)
(24,14.5) circle (3pt)
(25.5,14.5) circle (3pt)
(16.5,10) circle (3pt);

\draw[step=0.5cm,gray,very thin] (10.97,-0.03) grid (27.3,5.3);
\draw [->] (10,0)--(28.5,0) node[anchor=north]{$x$};
\draw [->] (12,-0.3) --(12,7) node[anchor=east]{$y$};
\draw [red,thick,-] (11,0) -- (12,0.5)--(24,4.5)--(25.5,4.5)--(12,0);

\draw[->] (18,9) .. controls (18.5,7.5) .. (18,6);

\draw (19,7) node[fill=white,right=2pt]{\Large $y\mapsto y+\alpha_1 x^{-2}$};

\draw (19,8) node[fill=white,right=2pt]{\Large $x\mapsto x$};

\fill[red] (11,0) circle (3pt)
(12,0.5) circle (3pt)
(24,4.5) circle (3pt)
(25.5,4.5) circle (3pt)
(12,0) circle (3pt);
\end{tikzpicture}
\caption{Applying $\phi_2\circ \phi_1$ to the case $(9,27)$}
\end{figure}
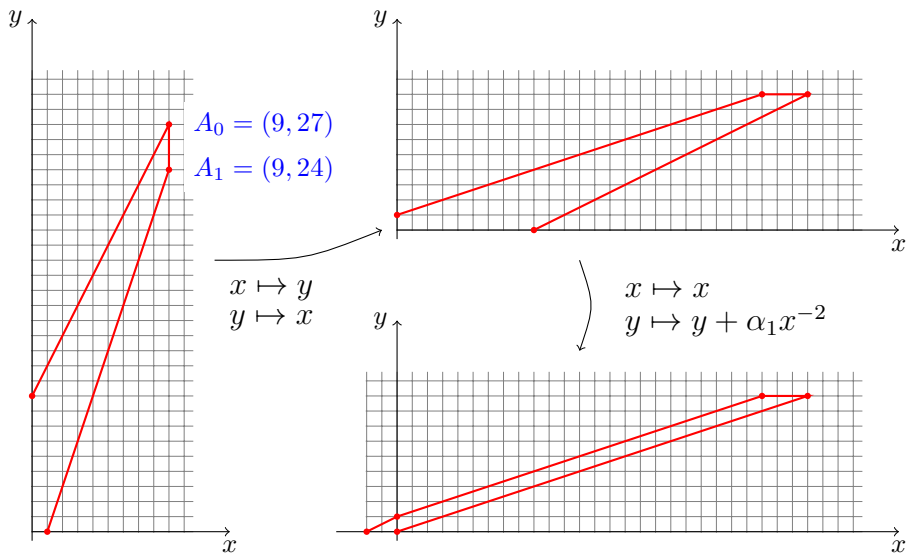
	
	The edge $\{(24,9), (0,1)\}$ is given by $y(yx^3 - \alpha_2)^8$ for some constant $\alpha_2$ (this corresponds to the edge $\{(1,0), (9,24)\}$ in the original polygon which is of this form). Apply the automorphism $\phi_3$ given by $\phi_3(x) = x$ and $\phi_3(y) = y + \alpha_2 x^{-3}$ to reduce this edge to $\{(24,9), (21,8)\}$. Let us analyze the possibilities for the opposite vertex in the other edge containing $(21,8)$.
	
	To do this, set $(\rho_2, \sigma_2) = \min\{\Succ_P(-1,3), \Succ_Q(-1,3)\}$ Then if $\en_{\rho_2,\sigma_2}(P) \sim \en_{\rho_2,\sigma_2}(Q)$, the point $(a',b') = \frac{1}{2} \en_{\rho_2,\sigma_2}(P) = \frac{1}{3} \en_{\rho_2,\sigma_2}(Q)$ could be at any of \\
$\{(-2,0),(-1,0),(1,1),(2,1),(4,2),(5,2),(7,3),(10,4),(13,5)\}$. To discard all but $(5,2)$ and $(13,5)$, it is enough to check that there cannot exist an element $F$ in the other cases. The corner of $F$ which is not $(1,1)$ must be of the form $(1,1) + c(21-a',8-b')/\gcd(21-a',8-b')$ for some positive integer $c$, so that
\begin{align*}
	(1,1) + c\frac{(21-a', 8-b')}{\gcd(21-a',8-b')} = \frac{p}{q} (21,8).
\end{align*}
Computing $v_{-8,21}$ on both sides of this equality gives $13\gcd(21-a',8-b') + c(8a' - 21b') = 0$, which implies that $21b' - 8a' | 13 \gcd(21-a',8-b')$. For each of the possibilities for $(a',b')$ above, we can discard $(1,1)$ as $(a',b')$ cannot lie in the diagonal, and for the rest, only $(5,2)$ and $(13,5)$ satisfy this divisibility condition, as can be seen in the table below.
\begin{center}
\ra{1.2}
\begin{tabular}[t]{clcc}
\toprule
$(a',b')$ & $21b' - 8a'$ & $13 \gcd(21-a', 8-b')$ \\
\midrule
$(-2,0)$ & $16$ & $13$  \\
$(-1,0)$ & $8$ & $26$ \\
$(2,1)$ & $5$ & $13$ \\
$(4,2)$ & $10$ & $13$ \\
$(5,2)$ & $2$ & $26$ \\
$(7,3)$ & $7$ & $13$ \\
$(10,4)$ & $4$ & $13$ \\
$(13,5)$ & $1$ & $13$ \\
\bottomrule
\end{tabular}
\end{center}

	Whether we continue assuming $(a',b') \in \{ (5,2), (13,5) \}$ or we consider instead the case $\en_{\rho_2,\sigma_2}(P) \not \sim \en_{\rho_2,\sigma_2}(Q)$, we can apply \cite{GGV1}*{Proposition 8.2} and get the existence of $k \in \mathbb{N}$ with
	\[
		(k+1)b < a \hspace{1cm} \text{ and } \hspace{1cm} \{\en_{\rho,\sigma}(P), \en_{\rho,\sigma}(Q)\} = \{(-k,0), (k+1,1) \},
	\]

\begin{figure}[htb]
\centering
\begin{tikzpicture}[scale=0.2]
\draw[step=0.5cm,gray,very thin] (-0.03,17-0.03) grid (28.3,27.3);
\draw [thick,->] (0,17)--(29,17) node[anchor=north]{$x$};
\draw [thick,->] (0,17-0.3) --(0,28) node[anchor=east]{$y$};
\draw [red,thick,-] (0,17) -- (1,17.5)--(21,25)--(24,26)--(27,26)--(0,17);

\fill[red] (0,17) circle (5pt)
(1,17.5) circle (5pt)
(21,25) circle (5pt)
(24,26) circle (5pt)
(27,26) circle (5pt);

\draw[step=0.5cm,gray,very thin] (-1.03,-0.03) grid (41.3,13.8);
\draw [thick,->] (0,0)--(42.5,0) node[anchor=north]{$x$};
\draw [thick,->] (0,-0.3) --(0,15) node[anchor=east]{$y$};
\draw [red,thick,-] (-0.5,0) -- (31.5,12)--(36,13.5)--(40.5,13.5)--(0,0);

\fill[red] (-0.5,0) circle (5pt)
(31.5,12) circle (5pt)
(36,13.5) circle (5pt)
(40.5,13.5) circle (5pt)
(0,0) circle (5pt);

\draw (19,21) node[fill=white,right=4pt]{\Huge $P$};

\draw (29,7) node[fill=white,right=4pt]{\Huge $Q$};

\end{tikzpicture}
\caption{The shape of $P$ and $Q$}
\end{figure}
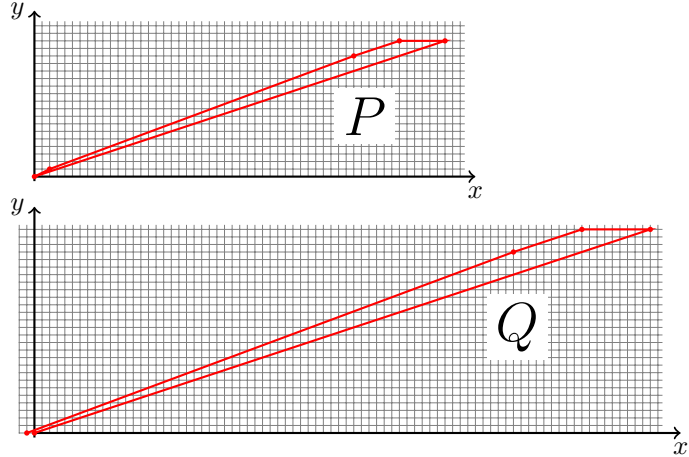

	where $(a,b)$ is one of $\{(21,8), (13,5), (5,2)\}$ and $(\rho,\sigma)$ is the direction corresponding to the edge in question.
In all cases this gives $k = 1$ and so  $\{\en_{\rho,\sigma}(P), \en_{\rho,\sigma}(Q)\}$ $= \{(-1,0), (2,1) \}$, with the same direction
$(\rho, \sigma) = (-3,8)$. (Note that if we started assuming $(a',b') \in \{(5,2), (13,5)\}$ then we have reached a contradiction, as we have a
different end for the direction $(-3,8)$.) Since $\st_{-3,8}(P) = (42,16)$ and $\st_{-3,8}(Q) = (63,24)$, we get that $\en_{-3,8}(P) = (2,1)$ and
$\en_{-3,8}(Q) = (-1,0)$. In fact, $(-3,8) \times ((42,16)-(-1,0)) \neq 0$.
	
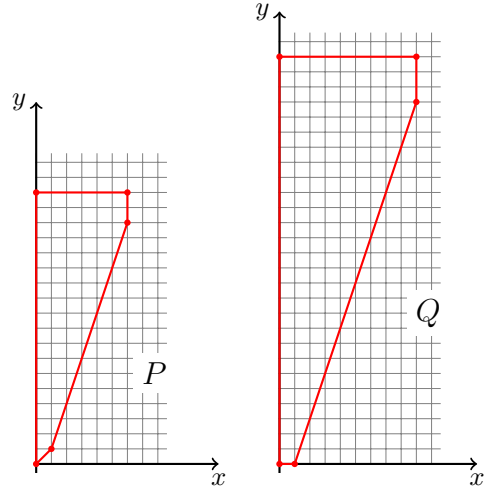
\begin{figure}[htb]
\centering
\begin{tikzpicture}[scale=0.4]
\draw[step=0.5cm,gray,very thin] (-0.03,-0.03) grid (4.3,10.3);
\draw [thick,->] (0,0)--(6,0) node[anchor=north]{$x$};
\draw [thick,->] (0,-0.3) --(0,12) node[anchor=east]{$y$};
\draw [red,thick,-] (0,0) -- (0.5,0.5)--(3,8)--(3,9)--(0,9)--(0,0);

\fill[red] (0,0) circle (3pt)
(0.5,0.5) circle (3pt)
(3,8) circle (3pt)
(3,9) circle (3pt)
(0,9) circle (3pt);

\draw[step=0.5cm,gray,very thin] (7.97,-0.03) grid (13.3,14.3);
\draw [thick,->] (8,0)--(14.5,0) node[anchor=north]{$x$};
\draw [thick,->] (8,-0.3) --(8,15) node[anchor=east]{$y$};
\draw [red,thick,-] (8,0) -- (8.5,0)--(12.5,12)--(12.5,13.5)--(8,13.5) --(8,0);

\fill[red] (8,0) circle (3pt)
(8.5,0) circle (3pt)
(12.5,12) circle (3pt)
(12.5,13.5) circle (3pt)
(8,13.5) circle (3pt);

\draw (3,3) node[fill=white,right=2pt]{\Large $P$};

\draw (12,5) node[fill=white,right=2pt]{\Large $Q$};

\end{tikzpicture}
\caption{The shape of $P$ and $Q$ according to Proposition~\ref{case927}}
\end{figure}

	For convenience, we now apply the morphism $\varphi$ such that $\varphi(x) = x^{-1}$ and $\varphi(y) = x^3y$. Note that this is an
 automorphism of $L^{(1)}=K[x,x^{-1},y]$ but not of $K[x,y]$. By the chain rule we have
$$
[\varphi(P), \varphi(Q)] = \varphi[P,Q] [\varphi(x), \varphi(y)] = -[P,Q] x,
$$
and multiplying by a nonzero constant we can assume $[\varphi(P), \varphi(Q)]=x$. This transforms the polygons of $P$ and $Q$ into
\begin{align*}
	N(P) = \{(0,0),(1,1),(6,16), (6,18), (0,18)\} \\
	N(Q) = \{(0,0),(1,0),(9,24), (9,27), (0,27)\}
\end{align*}
as desired.
\end{proof}

\begin{proposition}[Case (9,24)] \label{case924}
	If there is a counterexample to the Jacobian Conjecture in the case $(9,24)$, then there exist $P,Q \in L^{(1)}$ with $[P,Q] = x$ and one of the following cases holds:
	\begin{enumerate}
	\item
	$N(P) = \{(0,0),(1,1),(6,16), (6,18), (0,12)\} , \;
	N(Q) = \{(0,0),(1,0),(9,24), (9,27), (0,18)\}$
	\item  	$N(P) = \{(0,0),(1,1),(6,16), (6,18), (0,6)\},\phantom{2} \;
	N(Q) = \{(0,0),(1,0),(9,24), (9,27), (0,9)\}$
	\item $	N(P) = \{(0,0),(1,1),(6,16), (6,18)\},\phantom{, (0,12)} \;
	N(Q) = \{(0,0),(1,0),(9,24), (9,27)\}$
	\end{enumerate}

\end{proposition}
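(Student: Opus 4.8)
The plan is to repeat, with the appropriate modifications, the normalization carried out in the proof of Proposition~\ref{case927}. One starts from the Newton polygons of a hypothetical counterexample in the case $(9,24)$, whose corners form a fixed list of lattice points multiplied respectively by $(m,n)=(2,3)$ (read off from the tables in~\cite{GGV5}), and applies a sequence of automorphisms of $L^{(1)}=K[x,x^{-1},y]$ of the shape $x\mapsto x$, $y\mapsto y+\alpha x^{-j}$, together with the coordinate swap $\phi_1\colon x\mapsto y,\ y\mapsto x$, so as to strip off the ``upper'' edges of the two polygons one at a time. Each step is justified exactly as there: on the edge realizing the extremal valuation the polynomial is forced by \cite{GGV1}*{Corollary~7.4} to have the form $y^{s}(xy^{2}-\alpha_1)^{t}$, which after the swap becomes $x^{s'}(yx^{2}-\alpha_1)^{t}$ and is annihilated by $\phi_2\colon y\mapsto y+\alpha_1x^{-2}$; the next edge has the form $y(yx^{3}-\alpha_2)^{8}$ and is shortened by $\phi_3\colon y\mapsto y+\alpha_2x^{-3}$ to a monomial together with one neighbouring lattice point.

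The core of the argument is then the analysis of the last free vertex, carried out as in Proposition~\ref{case927}: set $(\rho_2,\sigma_2)=\min\{\Succ_P(\cdot,\cdot),\Succ_Q(\cdot,\cdot)\}$ for the relevant direction, and distinguish the sub-cases $\en_{\rho_2,\sigma_2}(P)\sim\en_{\rho_2,\sigma_2}(Q)$ and $\en_{\rho_2,\sigma_2}(P)\not\sim\en_{\rho_2,\sigma_2}(Q)$. In the first I would list the possible positions of the normalized endpoint $(a',b')=\tfrac1m\en_{\rho_2,\sigma_2}(P)=\tfrac1n\en_{\rho_2,\sigma_2}(Q)$ and discard all but a short sublist by the same valuation-and-divisibility computation (writing the non-diagonal corner as $(1,1)+c\,(a-a',b-b')/\gcd(a-a',b-b')$, equating it to $\tfrac pq(a,b)$, and evaluating the valuation perpendicular to the edge on both sides). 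Then \cite{GGV1}*{Proposition~8.2} yields $k\in\mathbb N$ with $(k+1)b<a$ and $\{\en_{\rho,\sigma}(P),\en_{\rho,\sigma}(Q)\}=\{(-k,0),(k+1,1)\}$, which determines $k$ on each surviving branch.

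The difference with the case $(9,27)$ is that this analysis does not single out one configuration. The exponent $s$ of the pure power of $y$ on the first reduced edge is not forced — in $(9,27)$ the Newton polygon itself fixed it — and once the divisibility table and the bound $\max\{\deg P,\deg Q\}=99$ defining this case are taken into account, exactly three admissible values remain, according to whether the upper-left edge of $N(P)$ lands on $(0,12)$, on $(0,6)$, or misses the $y$-axis entirely. In each of the three branches I would finish as in Proposition~\ref{case927}: read off $\en_{\rho,\sigma}(P)$ and $\en_{\rho,\sigma}(Q)$ from the known positions of $\st_{\rho,\sigma}(P)$ and $\st_{\rho,\sigma}(Q)$, check the non-degeneracy $(\rho,\sigma)\times(\st_{\rho,\sigma}(P)-\en_{\rho,\sigma}(P))\neq0$, and then apply a monomial rescaling $\varphi$ of the same type as in Proposition~\ref{case927} and normalize the Jacobian to $x$, obtaining the three pairs $(N(P),N(Q))$ in the statement.

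The main obstacle will be exactly this case split. One must verify that the three branches are genuinely exhaustive, i.e.\ that the divisibility table, the inequality $(k+1)b<a$ from \cite{GGV1}*{Proposition~8.2}, and the degree bound together eliminate every other candidate for the free vertex; and one must track $N(P)$ and $N(Q)$ through the composite automorphism consistently in each branch, since a single mistracked corner would corrupt the final list. A secondary subtlety, inherited from the $(9,27)$ case, is handling the $\sim$ and $\not\sim$ sub-cases uniformly so that no configuration is overlooked.
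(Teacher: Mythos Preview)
Your outline tracks the $(9,27)$ proof too closely and misses the point at which the $(9,24)$ argument genuinely diverges. The three branches do \emph{not} arise from an undetermined exponent $s$ in a factor $y^{s}(xy^{2}-\alpha_1)^{t}$, nor from the divisibility table for the vertex past $(21,8)$; that part of the argument goes through verbatim and produces a single outcome, exactly as in Proposition~\ref{case927}.

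The new phenomenon is on the \emph{other} side of the polygon. In the $(9,27)$ case the initial polygon has two edges meeting at $(9,27)$, and the one descending to $(0,9)$ is forced (since $\en F=\tfrac19(9,27)$) to be a ninth power of a single linear factor, so $\phi_2$ collapses it cleanly. In the $(9,24)$ case the top vertex is $(9,24)$ itself; after the swap one has $\en_{-1,3}(F)=\tfrac23(24,9)$, hence $q=3$, and Corollary~7.4 only gives $\ell_{1,-2}(P)=\lambda R^{3m}$ with $R$ of degree~$3$ in $z=x^{2}y$. Thus $R$ may be $x^{2}(z-\lambda_1)^{3}$, $x^{2}(z-\lambda_1)^{2}(z-\lambda_2)$, or $x^{2}(z-\lambda_1)(z-\lambda_2)(z-\lambda_3)$. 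The three-factor case has to be eliminated separately, and the paper does this with \cite{GGV2}*{Proposition~3.12}; the two-factor case yields branch~(1); in the one-factor case $\phi_2$ kills the whole edge and a \emph{second} application of the possible-starting-points analysis (the algorithm with $(a,b)=(8,3)$) shows the new preceding vertex lies in $\{(0,0),(9,3)\}$, giving branches~(3) and~(2). Along the way one needs \cite{vdE}*{Proposition~10.2.6} to exclude the spurious direction $(2,-5)$ and the spurious starting point $(6,2)$, and to guarantee $\lambda_1\neq0$.

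So the missing ingredients in your plan are: (i) recognizing that $q=3$ here rather than $q=9$, so the relevant edge is \emph{not} a pure power; (ii) the use of \cite{GGV2}*{Proposition~3.12} to discard the three-distinct-factors case; (iii) the iterated starting-point algorithm and \cite{vdE}*{Proposition~10.2.6} to pin down the residual vertex after $\phi_2$ in the cube case. Once these are in place, the remainder (the $\phi_3$ step, Proposition~8.2, and the monomial rescaling $\varphi$) does proceed exactly as in $(9,27)$.
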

\begin{proof}
	We will prove first that the corners of the polygons of $P$ and $Q$ are $\{(0,0),(1,0),(9,24), (0,6)\}$ multiplied by $(m,n) = (2,3)$.

For this we apply the automorphism $\phi_1$ with $\phi_1(x) = y$, $\phi_1(y) = x$ and we use ~\cite{GGV1}*{Corollary 7.4} with
$(\rho_0,\sigma_0)=(-1,3)$, $l=1$ and $(a/l,b)=(24,9)$. Then $\en_{-1,3}(F) = (16,6)=\frac{2}{3}(24,9)$, which means that $q=3$ in the corollary,
 and for
$(\rho,\sigma)=\Pred_P(-1,3)$ we have $(\rho,\sigma)\in ](0,-1),(1,-1)]$ and there exists a $(\rho,\sigma)$-homogeneous element $R$,
such that $\ell_{\rho,\sigma}(P)=\lambda R^{qm}=\lambda R^{3m}$ and $v_{\rho,\sigma}(R)>0$. Thus $\en_{\rho,\sigma}(R)=(8,3)$, and there exists a
$(\rho,\sigma)$-homogeneous
element $G$ satisfying
\begin{equation}\label{condicion}
  [R,G]=R^i\quad\text{for some}\quad i.
\end{equation}
 Therefore, we are in the setting of~\cite{GGV2}*{Proposition 3.12}. The following
algorithm with $l=1$, $a=8$ and $b=3$, shows that $\Pred_{P}(1,0)\in\{(1,-2),(2,-5)\}$.

\begin{algorithm}[H]\label{algoritmo}
    \SetAlgoLined\DontPrintSemicolon
    \SetKwFunction{PossibleStartingPoints}{PossibleStartingPoints}
    \SetKwProg{myalg}{Algorithm}{}{}
    \SetKwProg{myproc}{Procedure}{}{}
    \myproc{\PossibleStartingPoints{$M$}}{
    \KwIn{A corner $(a/l,b)\in\frac 1l \mathds{N}\times \mathds{N}$}
	\KwOut{A list $\PossibleStartingPoints$ of points $(c/l,d)$ such that~\eqref{condicion} is satisfied}
        \For{$d=0$ \KwTo $b-1$}{
            \For{$c=\lfloor d*\frac ab\rfloor+1$ \KwTo $ld+a-bl-1$}{
                $N_1 = \gcd(a - c, b - d)$;\\
                $N_2 = \gcd(c, d)$;\\
                $(\rho,\sigma) =\dir((a/l,b)-(c/l,d))$;\\
                $s = (\rho a+\sigma l b)/\gcd(\rho a+\sigma l b, l(\rho+\sigma))$;\\
                \If{($s\mid N_2$ \normalfont\textbf{ and } $d>0$), \normalfont\textbf{ or } $s\le N_1$}
                    {$(c/l,d) \hookrightarrow \PossibleStartingPoints$}
                }
            }
    }
    {\bf RETURN} List $\PossibleStartingPoints$
    \caption{Possible starting points}
\end{algorithm}

 But $\Pred_{P}(1,0)=(2,-5)$ leads to $\st_{2,-5}(P)=3m(3,1)$,
and $\Pred_{P}(2,-5)\le (1,-3)$, and then $\deg_x(P(x,0))\le 0$,  which contradicts~\cite{vdE}*{Proposition 10.2.6}, since $P\in kK[x,y]$.
Similarly $\st_{1,-2}(R)=(6,2)$ leads to $\Pred_{P}(1,-2)\le (1,-3)$, using the above algorithm. But then $\deg_x(P(x,0))\le 0$,  which again
contradicts~\cite{vdE}*{Proposition 10.2.6}. By the same~\cite{vdE}*{Proposition 10.2.6} we have $\Pred_{P}(1,0)\le (3,-8)$ and
$\st_{\rho,\sigma}(R)\ne (0,0)$ for $(\rho,\sigma)=\Pred_{P}(1,0)$.

Thus $\Pred_{P}(1,0)=(1,-2)$ and $\st_{1,-2}(R)\in \{(2,0),(4,1)\}$.

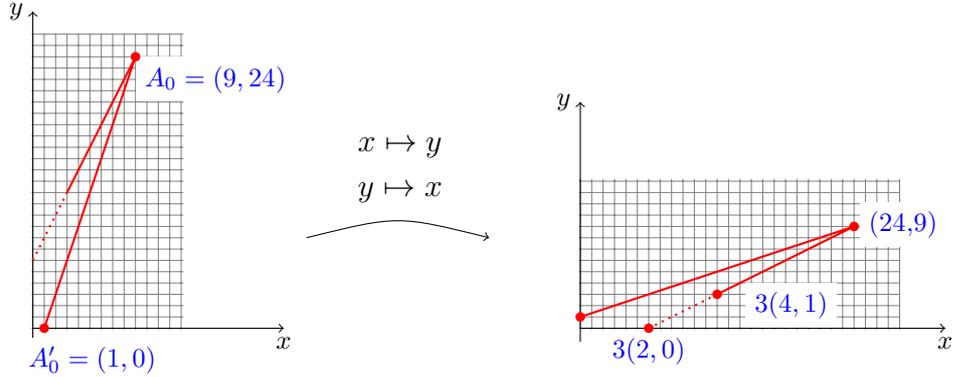
\begin{figure}[htb]
\centering
\begin{tikzpicture}[scale=0.6]
\draw[step=0.25cm,gray,very thin] (-0.03,-0.03) grid (3.3,6.5);
\draw [->] (0,0)--(5.5,0) node[anchor=north]{$x$};
\draw [->] (0,-0.3) --(0,7) node[anchor=east]{$y$};
\draw [red,thick,dotted] (0,1.5) -- (0.75,3);
\draw [red,thick,-] (0.75,3) -- (2.25,6)--(0.25,0);
\draw[blue] (2.25,5.5) node[fill=white, above=0pt, right=0pt]{ $A_0=(9,24)$};
\draw[blue](-0.3,-0.7) node[fill=white, above=0pt, right=0pt]{ $A_0'=(1,0)$};
\fill[red]
(2.25,6) circle (3pt)
(0.25,0) circle (3pt);
\draw (6.8,3) node[fill=white,right=2pt]{\Large $y\mapsto x$};

\draw (6.8,4) node[fill=white,right=2pt]{\Large $x\mapsto y$};
\draw[->] (6,2) .. controls (8,2.5) .. (10,2);

\draw[step=0.25cm,gray,very thin] (12-0.03,-0.03) grid (19,3.3);
\draw [->] (12,0)--(20,0) node[anchor=north]{$x$};
\draw [->] (12,-0.3) --(12,5) node[anchor=east]{$y$};
\draw [red,thick,dotted] (15,0.75)--(13.5,0);
\draw [red,thick,-] (12,0.25) -- (18,2.25)--(15,0.75);
\fill[red] (12,0.25) circle (3pt)
(15,0.75) circle (3pt)
(18,2.25) circle (3pt)
(13.5,0) circle (3pt);
\draw[blue] (13.5,-0.5) node {$3(2,0)$};
\draw[blue] (15.5,0.5) node[fill=white,right=2pt] {$3(4,1)$};
\draw[blue] (18,2.3) node[fill=white,right=2pt]{(24,9)};

\end{tikzpicture}
\caption{$\Pred_{P}(1,0)=(-1,2)$ and $\st_{1,-2}(R)\in \{(2,0),(4,1)\}$.}
\end{figure}

By~\cite{GGV1}*{Corollary 7.4} we have
$\ell_{1,-2}(P_1)=\lambda R^{3m}$, and $R$ has either one, two or three linear factors in $z=x^2y$, hence
$$
R=x^2(z-\lambda_1)^3,\quad R=x^2(z-\lambda_1)^2(z-\lambda_2)\quad \text{or}\quad R=x^2(z-\lambda_1)(z-\lambda_2)(z-\lambda_3),
$$
for three different $\lambda_i$.
Let $\ov\varphi$ be given by $\ov\varphi(x)=x$ and $\ov\varphi(y)=y+\lambda_1x^{-2}$.
In the last case we obtain $(c,d)=(12,3)$ and $(a,b)=(24,9)$, and $s=\vartheta,N_1,N_2$ in~\cite{GGV2}*{Proposition 3.12} are given by
$s=\vartheta=6$, $N_1=6$, $N_2=3$. By~\cite{GGV2}*{Proposition 3.12}(2) there is a linear factor of $R^3$ with multiplicity $s=\vartheta=6$,
contradicting the fact that all linear factors of $R$ are different, thus eliminating the third case.

Moreover $\lambda_1\ne 0$, since in the first case $\lambda_1=0$ implies $\Pred_P(1,0)=(2,-5)$ or
$\Pred_P(1,0)\le (8,-3)$, which is impossible by the argument above; and similarly in the second case $\lambda_1=0$ implies
$\st_{\rho,\sigma}(R)=(6,2)$, which is also impossible by the argument above.

If $R=x^2(z-\lambda_1)^2(z-\lambda_2)$, apply an automorphism $\phi_2$ with $\phi_2(x) = x$ and $\phi_2(y) = y + \lambda_1 x^{-2}$ and we transform the corners of the polygons to $\{(0,0),(18,6),(24,9),(0,1),(-2,0)\}$, again multiplied by $(2,3)$ respectively. In fact,
if $\Pred_P(1,-2)=(\rho,\sigma)$, then
$\ell_{\rho,\sigma}(P)=\lambda R_1^{3m}$, hence $\en_{\rho,\sigma}(R_1)=(6,2)$, and the algorithm above shows that
$\st_{\rho,\sigma}(R_1)=(0,0)$

After doing the transformations on the edge $\{(24,9), (0,1)\}$ exactly as in the case $(9,27)$, one obtains the desired form:
\begin{align*}
	N(P) = \{(0,0),(1,1),(6,16), (6,18), (0,12)\} \\
	N(Q) = \{(0,0),(1,0),(9,24), (9,27), (0,18)\}.
\end{align*}

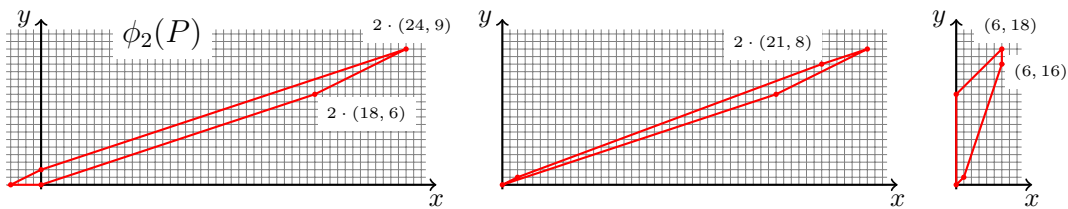
\begin{figure}[htb]
\centering
\begin{tikzpicture}[scale=0.2]
\draw[step=0.5cm,gray,very thin] (-2.3,17-0.03) grid (25.3,27.3);
\draw [thick,->] (0,17)--(26,17) node[anchor=north]{$x$};
\draw [thick,->] (0,17-0.3) --(0,28) node[anchor=east]{$y$};
\draw [red,thick,-] (0,17) -- (-2,17)--(0,18)--(24,26)--(18,23)--(0,17);

\fill[red] (0,17) circle (5pt)
(0,18) circle (5pt)
(-2,17) circle (5pt)
(24,26) circle (5pt)
(18,23) circle (5pt);
\draw (4,27) node[fill=white,right=4pt]{\Large $\phi_2(P)$};
\draw (21,27.5) node[fill=white,right=1pt] {\tiny $2\cdot (24,9)$};
\draw (18,21.7) node[fill=white,right=1pt] {\tiny $2\cdot (18,6)$};
\end{tikzpicture}
\begin{tikzpicture}[scale=0.2]
\draw[step=0.5cm,gray,very thin] (-0.03,17-0.03) grid (25.3,27.3);
\draw [thick,->] (0,17)--(26,17) node[anchor=north]{$x$};
\draw [thick,->] (0,17-0.3) --(0,28) node[anchor=east]{$y$};
\draw [red,thick,-] (0,17) -- (1,17.5)--(21,25)--(24,26)--(18,23)--(0,17);

\fill[red] (0,17) circle (5pt)
(1,17.5) circle (5pt)
(21,25) circle (5pt)
(24,26) circle (5pt)
(18,23) circle (5pt);
\draw (14.4,26.4) node[fill=white,right=1pt] {\tiny $2\cdot (21,8)$};

\end{tikzpicture}
\begin{tikzpicture}[scale=0.2]
\draw[step=0.5cm,gray,very thin] (-0.03,17-0.03) grid (4.3,27.3);
\draw [thick,->] (0,17)--(5,17) node[anchor=north]{$x$};
\draw [thick,->] (0,17-0.3) --(0,28) node[anchor=east]{$y$};
\draw [red,thick,-] (0,17) -- (0.5,17.5)--(3,25)--(3,26)--(0,23)--(0,17);
\fill[red] (0,17) circle (5pt)
(0.5,17.5) circle (5pt)
(3,25) circle (5pt)
(3,26) circle (5pt)
(0,23) circle (5pt);
\draw (1,27.5) node[fill=white,right=1pt] {\tiny $(6,18)$};
\draw (3,24.5) node[fill=white,right=1pt] {\tiny $(6,16)$};
\end{tikzpicture}

\caption{The transformation of $P$ when $R=x^2(z-\lambda_1)^2(z-\lambda_2)$.}
\end{figure}

On the other hand, if $R=x^2(z-\lambda_1)^3$ then the edge $\{(6,0), (24,9)\}$ is completely cut by $\phi_2$, and the new (preceding) edge $\{(a',b'),(24,9)\}$ must satisfy $(a',b') \in \{(0,0), (9,3)\}$. In fact, $\en F = \frac{2}{3}(9,24)$ and so the algorithm above with $l=1$, $a=8$ and $b=3$, shows that $\st_{\rho,\sigma}(\phi_2(P)) \in 3m\{(0,0),(3,1)\}$.
 Transforming the edge $\{(24,9),(0,1)\}$ as before, we arrive at the polygons
\begin{align*}
	N(P) = \{(0,0),(1,1),(6,16), (6,18)\} \\
	N(Q) = \{(0,0),(1,0),(9,24), (9,27)\}
\end{align*}

\begin{figure}[htb]
\centering
\begin{tikzpicture}[scale=0.2]
\draw[step=0.5cm,gray,very thin] (-2.3,17-0.03) grid (25.3,27.3);
\draw [thick,->] (0,17)--(26,17) node[anchor=north]{$x$};
\draw [thick,->] (0,17-0.3) --(0,28) node[anchor=east]{$y$};
\draw [red,thick,-] (0,17) -- (-2,17)--(0,18)--(24,26)--(0,17);

\fill[red] (0,17) circle (5pt)
(0,18) circle (5pt)
(-2,17) circle (5pt)
(24,26) circle (5pt);
\draw (4,27) node[fill=white,right=4pt]{\Large $\phi_2(P)$};
\draw (21,27.5) node[fill=white,right=1pt] {\tiny $2\cdot (24,9)$};
\end{tikzpicture}
\begin{tikzpicture}[scale=0.2]
\draw[step=0.5cm,gray,very thin] (-0.03,17-0.03) grid (25.3,27.3);
\draw [thick,->] (0,17)--(26,17) node[anchor=north]{$x$};
\draw [thick,->] (0,17-0.3) --(0,28) node[anchor=east]{$y$};
\draw [red,-] (0,17) -- (1,17.5)--(21,25)--(24,26)--(0,17);

\fill[red] (0,17) circle (5pt)
(1,17.5) circle (5pt)
(21,25) circle (5pt)
(24,26) circle (5pt);
\draw (14.4,26.4) node[fill=white,right=1pt] {\tiny $2\cdot (21,8)$};

\end{tikzpicture}
\begin{tikzpicture}[scale=0.2]
\draw[step=0.5cm,gray,very thin] (-0.03,17-0.03) grid (4.3,27.3);
\draw [thick,->] (0,17)--(5,17) node[anchor=north]{$x$};
\draw [thick,->] (0,17-0.3) --(0,28) node[anchor=east]{$y$};
\draw [red,thick,-] (0,17) -- (0.5,17.5)--(3,25)--(3,26)--(0,17);
\fill[red] (0,17) circle (5pt)
(0.5,17.5) circle (5pt)
(3,25) circle (5pt)
(3,26) circle (5pt);
\draw (1,27.5) node[fill=white,right=1pt] {\tiny $(6,18)$};
\draw (3,24.5) node[fill=white,right=1pt] {\tiny $(6,16)$};
\end{tikzpicture}

\caption{The transformation of $P$ when $R=x^2(z-\lambda_1)^3$ and $(a',b')=(0,0)$.}
\end{figure}
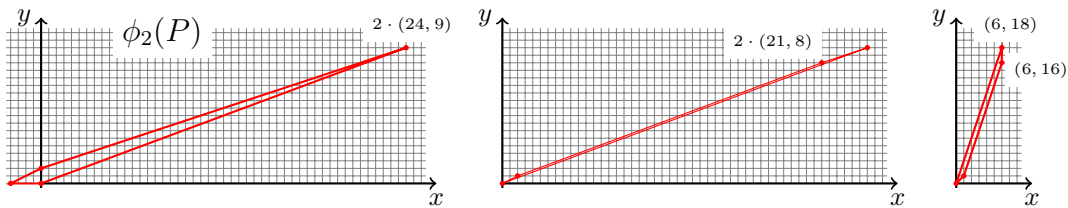

and
 \begin{align*}
	N(P) = \{(0,0),(1,1),(6,16), (6,18), (0,6)\} \\
	N(Q) = \{(0,0),(1,0),(9,24), (9,27), (0,9)\}
\end{align*}

respectively.

\begin{figure}[htb]
\centering
\begin{tikzpicture}[scale=0.2]
\draw[step=0.5cm,gray,very thin] (-2.3,17-0.03) grid (25.3,27.3);
\draw [thick,->] (0,17)--(26,17) node[anchor=north]{$x$};
\draw [thick,->] (0,17-0.3) --(0,28) node[anchor=east]{$y$};
\draw [red,thick,-] (0,17) -- (-2,17)--(0,18)--(24,26)--(9,20)--(0,17);

\fill[red] (0,17) circle (5pt)
(0,18) circle (5pt)
(-2,17) circle (5pt)
(24,26) circle (5pt)
(9,20) circle (5pt);
\draw (4,27) node[fill=white,right=4pt]{\Large $\phi_2(P)$};
\draw (21,27.5) node[fill=white,right=1pt] {\tiny $2\cdot (24,9)$};
\draw (9,19) node[fill=white,right=1pt] {\tiny $2\cdot (9,3)$};
\end{tikzpicture}
\begin{tikzpicture}[scale=0.2]
\draw[step=0.5cm,gray,very thin] (-0.03,17-0.03) grid (25.3,27.3);
\draw [thick,->] (0,17)--(26,17) node[anchor=north]{$x$};
\draw [thick,->] (0,17-0.3) --(0,28) node[anchor=east]{$y$};
\draw [red,thick,-] (0,17) -- (1,17.5)--(21,25)--(24,26)--(9,20)--(0,17);

\fill[red] (0,17) circle (5pt)
(1,17.5) circle (5pt)
(21,25) circle (5pt)
(24,26) circle (5pt)
(9,20) circle (5pt);
\draw (14.4,26.4) node[fill=white,right=1pt] {\tiny $2\cdot (21,8)$};

\end{tikzpicture}
\begin{tikzpicture}[scale=0.2]
\draw[step=0.5cm,gray,very thin] (-0.03,17-0.03) grid (4.3,27.3);
\draw [thick,->] (0,17)--(5,17) node[anchor=north]{$x$};
\draw [thick,->] (0,17-0.3) --(0,28) node[anchor=east]{$y$};
\draw [red,thick,-] (0,17) -- (0.5,17.5)--(3,25)--(3,26)--(0,20)--(0,17);
\fill[red] (0,17) circle (5pt)
(0.5,17.5) circle (5pt)
(3,25) circle (5pt)
(3,26) circle (5pt)
(0,20) circle (5pt);
\draw (1,27.5) node[fill=white,right=1pt] {\tiny $(6,18)$};
\draw (3,24.5) node[fill=white,right=1pt] {\tiny $(6,16)$};
\end{tikzpicture}

\caption{The transformation of $P$ when $R=x^2(z-\lambda_1)^3$ and $(a',b')=(9,3)$.}
\end{figure}
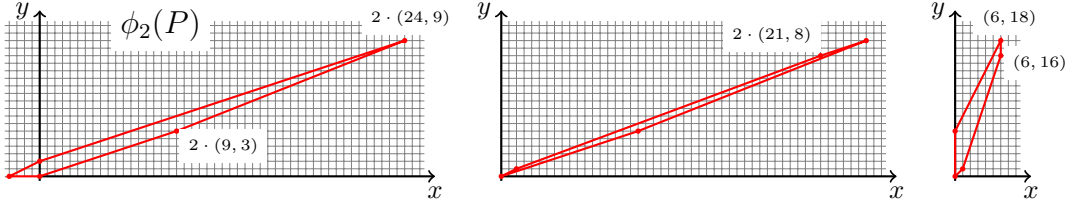

\end{proof}

\begin{proposition}[Case (8,28)]
	If there is a counterexample to the Jacobian Conjecture in the case $(8,28)$, then there exist $P,Q \in L^{(1)}$ with $[P,Q] = x^2$ and one of the following cases holds:
\begin{enumerate}
	\item $N(P) = \{(0,0),(1,0),(8,14), (8,16), (0,8)\}, \; N(Q) = \{(0,0),(2,1),(12,21), (12,24), (0,12)\}.$
	\item $N(P) = \{(0,0),(1,0),(8,14), (8,16)\}, \;	N(Q) = \{(0,0),(2,1),(12,21), (12,24)\}$
\end{enumerate}
	
\end{proposition}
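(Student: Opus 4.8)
The plan is to follow, almost verbatim, the template of Propositions~\ref{case927} and~\ref{case924}: a fixed list of Newton polygon corners taken from~\cite{GGV5}, a short chain of automorphisms of $L^{(1)}$ that straightens the polygon, an elimination of all but two intermediate vertices, and a final monomial change of variables that normalises the bracket. First I would recall from~\cite{GGV5} the smallest member of the $(8,28)$ family (here $(m,n)=(3,2)$): the corners of $N(P)$ and $N(Q)$ form a fixed list $\mathcal C=\{(0,0),(1,0),A_1',A_0',C_0'\}$ scaled by $m$ and $n$ respectively, with $A_0'=(8,28)$ the top corner and $C_0'=(0,c_0)$ on the $y$-axis. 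By~\cite{GGV1}*{Corollary~7.4}, applied in the direction of the edge $\{C_0',A_0'\}$, one has $\en_{\rho_0,\sigma_0}(F)=\frac1q A_0'$ for an explicit $q$, so that edge of $P$ (and likewise of $Q$) is $y^{j}(x^{a}y^{b}-\alpha_1)^{q}$ for explicit $a,b,j$; and, exactly as in the other two cases, the edge $\{(1,0),A_1'\}$ is $y(x^{a'}y^{b'}-\alpha_2)^{q'}$ for an explicit $q'$ and a constant $\alpha_2$.

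Next I would straighten the polygon. Apply the coordinate swap $\phi_1$ with $\phi_1(x)=y$, $\phi_1(y)=x$, then $\phi_2$ with $\phi_2(x)=x$, $\phi_2(y)=y+\alpha_1x^{-e_1}$, which cuts away almost all of the first edge and brings the top of the polygon next to the $x$-axis, and then $\phi_3$ with $\phi_3(x)=x$, $\phi_3(y)=y+\alpha_2x^{-e_2}$ to shorten the second edge; throughout I would track the vertices through the monomial substitutions induced by $\phi_1,\phi_2,\phi_3$, as in the figures of Propositions~\ref{case927} and~\ref{case924}.

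Then comes the substantial part: pinning down the remaining inner vertex. Following the proof of Proposition~\ref{case927}, set $(\rho_2,\sigma_2)=\min\{\Succ_P(\rho_1,\sigma_1),\Succ_Q(\rho_1,\sigma_1)\}$ for the appropriate predecessor direction $(\rho_1,\sigma_1)$, list the lattice points $(a',b')=\frac1m\en_{\rho_2,\sigma_2}(P)=\frac1n\en_{\rho_2,\sigma_2}(Q)$ lying weakly below the relevant line, and prune them with the divisibility condition obtained by evaluating a monomial valuation on the identity $(1,1)+c\,\dir\big((a,b)-(a',b')\big)=\frac pq(a,b)$, arranged in a short table as in Proposition~\ref{case927}. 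For the surviving candidates, and also in the case $\en_{\rho_2,\sigma_2}(P)\not\sim\en_{\rho_2,\sigma_2}(Q)$, I would invoke~\cite{GGV1}*{Proposition~8.2} to get $\{\en_{\rho,\sigma}(P),\en_{\rho,\sigma}(Q)\}=\{(-k,0),(k+1,1)\}$ with $(k+1)b<a$, and check that only $k=1$ is possible. In parallel with Proposition~\ref{case924}, I would analyse the factorisation of the $(1,0)$-edge homogeneous element $R$ in the relevant monomial $z$ (it has between one and $q$ distinct linear factors) and use Algorithm~\ref{algoritmo} together with~\cite{GGV2}*{Proposition~3.12} and~\cite{vdE}*{Proposition~10.2.6} (via $\deg_xP(x,0)>0$) to discard all but two of the resulting sub-cases; for each survivor one further run of Algorithm~\ref{algoritmo} fixes the last vertex, giving exactly the two polygon shapes in the statement.

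Finally I would apply the automorphism $\varphi$ of $L^{(1)}=K[x,x^{-1},y]$ with $\varphi(x)=x^{-1}$ and $\varphi(y)=x^{4}y$; by the chain rule $[\varphi(P),\varphi(Q)]=\varphi[P,Q]\,[\varphi(x),\varphi(y)]=-x^{2}[P,Q]$, so after rescaling by a nonzero constant we may assume $[\varphi(P),\varphi(Q)]=x^{2}$, and since $\varphi$ sends $x^{a}y^{b}$ to $x^{4b-a}y^{b}$ it carries the two polygons obtained in the previous step to precisely $(1)$ and $(2)$. The hard part will be the elimination of intermediate vertices: as in Proposition~\ref{case924}, one must be certain that the enumeration of candidate inner corners is exhaustive, carry the side-conditions (all linear factors of $R$ distinct, $(k+1)b<a$, and $\deg_xP(x,0)>0$) correctly through every branch, and confirm that exactly the two claimed branches survive; the earlier steps and the final automorphism are routine bookkeeping once the set-up is fixed.
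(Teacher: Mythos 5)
Your plan reproduces, at the level of structure, exactly the argument in the paper: first pin down the initial lower side by applying the coordinate swap $\phi_1$, using \cite{GGV1}*{Corollary 7.4} (here $q=4$, $\en_{-1,4}(F)=(21,6)=\frac34(28,8)$), Algorithm~\ref{algoritmo} together with \cite{GGV2}*{Proposition 3.12}, \cite{GGV6}*{Proposition 2.5} and \cite{vdE}*{Proposition 10.2.6} to constrain $\Pred_P(1,0)$ and the factorisation of $R$; then straighten the edge $\{(28,8),(0,1)\}$ by $\phi_3(y)=y+\alpha x^{-4}$ and use the case-927 vertex enumeration plus \cite{GGV1}*{Proposition 8.2} to force $k=1$ and the ends $\{(-1,0),(2,1)\}$; and finally apply $\varphi(x)=x^{-1}$, $\varphi(y)=x^4y$, whose Jacobian $-x^2$ normalises the bracket to $x^2$ and sends the lattice point $(a,b)$ to $(4b-a,b)$, producing the two claimed polygons.

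The one place where your sketch is looser than the paper is the branch count: the paper derives three candidate lower sides a), b), c) from whether $R$ has one, two, or three linear factors (and within the single-factor case whether the new predecessor direction is $(1,-3)$ or $(2,-7)$), and the key observation is that after straightening the upper edge, cases a) and b) produce identical polygons, so only two shapes survive; your proposal asserts ``exactly the two claimed branches survive'' without isolating this coincidence. That collapse, rather than a simple elimination, is why the proposition ends with two cases rather than three, and is the one substantive step you should make explicit when fleshing out the plan. Otherwise the approach and the key lemmas invoked match the paper's proof.
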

\begin{proof}
We first prove that the corners of the polygons of $P$ and $Q$ in this case are given by
$\{ (0,0), (1,0), (8,28), (0,4) \}$, multiplied by $(m,n) = (3,2)$.

 For this we apply the automorphism $\phi_1$ with $\phi_1(x) = y$, $\phi_1(y) = x$ and we use ~\cite{GGV1}*{Corollary 7.4} with
$(\rho_0,\sigma_0)=(-1,4)$, $l=1$ and $(a/l,b)=(28,8)$. Then $\en_{-1,4}(F) = (21,6)=\frac{3}{4}(28,8)$, which means that $q=4$ in the corollary,
 and for
$(\rho,\sigma)=\Pred_P(-1,4)$ we have $(\rho,\sigma)\in ](0,-1),(1,-1)]$ and there exists a $(\rho,\sigma)$-homogeneous element $R$,
such that $\ell_{\rho,\sigma}(P)=\lambda R^{qm}=\lambda R^{4m}$ and $v_{\rho,\sigma}(R)>0$. Thus $\en_{\rho,\sigma}(R)=(7,2)$, and there exists a
$(\rho,\sigma)$-homogeneous
element $G$ satisfying  $[R,G]=R^i$ for some $i$.
Therefore, we are in the setting of~\cite{GGV2}*{Proposition 3.12}. By~\cite{GGV6}*{Proposition 2.5}, necessarily
$\Pred_P(1,0)\in\{(1,-2),(1,-3)\}$.
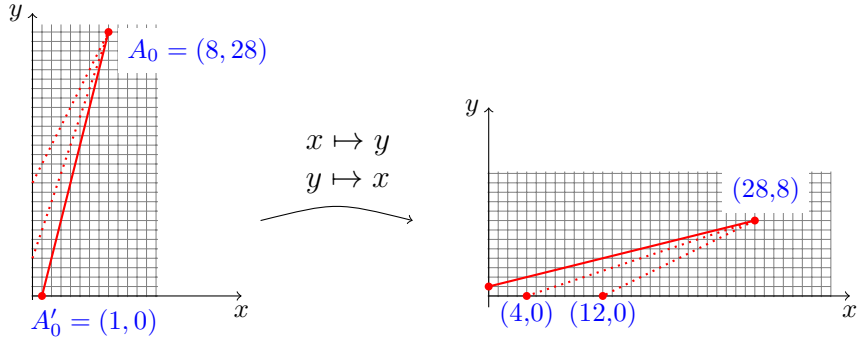
\begin{figure}[htb]
\centering
\begin{tikzpicture}[scale=0.5]
\draw[step=0.25cm,gray,very thin] (-0.03,-0.03) grid (3.3,7.2);
\draw [->] (0,0)--(5.5,0) node[anchor=north]{$x$};
\draw [->] (0,-0.3) --(0,7.5) node[anchor=east]{$y$};
\draw [red,thick,-]  (2,7)--(0.25,0);
\draw [red,thick,dotted] (0,1) -- (2,7);
\draw [red,thick,dotted] (0,3) -- (2,7);

\draw[blue] (2.25,6.5) node[fill=white, above=0pt, right=0pt]{ $A_0=(8,28)$};
\draw[blue](-0.3,-0.7) node[fill=white, above=0pt, right=0pt]{ $A_0'=(1,0)$};
\fill[red]
(2,7) circle (3pt)
(0.25,0) circle (3pt);
\draw (6.8,3) node[fill=white,right=2pt]{\Large $y\mapsto x$};

\draw (6.8,4) node[fill=white,right=2pt]{\Large $x\mapsto y$};
\draw[->] (6,2) .. controls (8,2.5) .. (10,2);

\draw[step=0.25cm,gray,very thin] (12-0.03,-0.03) grid (21,3.3);
\draw [->] (12,0)--(21.5,0) node[anchor=north]{$x$};
\draw [->] (12,-0.3) --(12,5) node[anchor=east]{$y$};
\draw [red,thick,-] (12,0.25) -- (19,2);
\draw [red,thick,dotted] (19,2)--(13,0);
\draw [red,thick,dotted] (19,2)--(15,0);

\fill[red] (12,0.25) circle (3pt)
(19,2) circle (3pt)
(15,0) circle (3pt)
(13,0) circle (3pt);
\draw[blue] (13,-0.5) node {(4,0)};
\draw[blue] (15,-0.5) node {(12,0)};

\draw[blue] (18,2.8) node[fill=white,right=2pt]{(28,8)};
\end{tikzpicture}
\caption{$\Pred_P(1,0)\in\{(1,-2),(1,-3)\}$}
\end{figure}

By the same proposition, if $\Pred_P(1,0)=(1,-2)$, then $\ell_{1,-2}(P)=\lambda (z-\lambda_1)^{8m}$, which implies that
if we apply the morphism given by $\varphi(x)=x$ and  $\varphi(y)=y+\lambda_1 x^{-2}$, then $\Pred_{\varphi(P)}(1,0)\in \{(1,-3),(2,-7)\}$. If
$\Pred{\varphi(P)}(1,0)=(2,-7)$ then the lower side of $\varphi(P)$ has only the corners $m(-2,0)$, $(0,0)$ and $m(28,8)$.

If $\Pred_{\varphi(P)}(1,0)=(1,-3)$, then there are one or two different linear factors in $\ell_{1,-3}(\varphi(P))$.
If there is only one factor, we apply the morphism
 $x\mapsto x$, $y\mapsto y+\lambda x^{-3}$ for adequate $\lambda$ and obtain that the lower side of $P$ has
only the corners $m(-3,0)$, $(0,0)$ and $m(28,8)$.
If are two factors  in $\ell_{1,-3}(P)$, then $\ell_{1,-3}(P)=\lambda x^{4m}(x^3y-\alpha_1)^{4m}(x^3y-\alpha_2)^{4m}$. We apply the morphism
 $x\mapsto x$, $y\mapsto y+\alpha_1 x^{-3}$ and obtain that the lower side of $P$ has
only the corners $m(-3,0)$, $(0,0)$, $m(16,4)$ and $m(28,8)$.

Thus after the transformations we have three possibilities for the Newton polygon of $P$ and $Q$:
\begin{itemize}
  \item[a)] $(m,n)$ times $\{(-2,0),(0,0),(28,8),(0,1)\}$,
  \item[b)] $(m,n)$ times $\{(-3,0),(0,0),(28,8),(0,1)\}$,
  \item[c)] $(m,n)$ times $\{(-3,0),(0,0),(16,4),(28,8),(0,1)\}$.
\end{itemize}

\noindent \begin{tikzpicture}[scale=0.2]
\draw(34,0)node{};
\draw[step=1cm,gray,very thin] (-3.3,-0.03) grid (29.3,10.3);
\draw [thick,->] (0,0)--(31,0) node[anchor=south]{$x$};
\draw [thick,->] (0,-0.3) --(0,11) node[anchor=east]{$y$};
\draw [red,thick,-] (0,1)--(28,8)--(0,0)--(-2,0);
\draw [red,thick,dotted] (0,1)--(-2,0);
\draw [red,thick,dotted] (0,1.1)--(-3.2,0);
\draw [red,thick,dotted] (-2,0)--(-3,0);
\fill[red] (0,0) circle (5pt)
(0,1) circle (5pt)
(-2,0) circle (5pt)
(-3,0) circle (5pt)
(28,8) circle (5pt);
\draw (25,9.5) node[fill=white,right=1pt] {\tiny $(28,8)$};
\draw (15,-2)node{$\frac{1}{2}N(P)=\frac{1}{3}N(Q)$ in the cases a) and b)};
\end{tikzpicture}
\begin{tikzpicture}[scale=0.2]
\draw[step=1cm,gray,very thin] (-3.3,-0.03) grid (29.3,10.3);
\draw [thick,->] (0,0)--(31,0) node[anchor=south]{$x$};
\draw [thick,->] (0,-0.3) --(0,11) node[anchor=east]{$y$};
\draw [red,thick,-] (0,1)--(28,8)--(16,4)--(0,0)--(-3,0)--(0,1);
\fill[red] (0,0) circle (5pt)
(0,1) circle (5pt)
(16,4) circle (5pt)
(-3,0) circle (5pt)
(28,8) circle (5pt);
\draw (25,9.5) node[fill=white,right=1pt] {\tiny $(28,8)$};
\draw (16,2.5) node[fill=white,right=1pt] {\tiny $(16,4)$};
\draw (15,-2)node{$\frac{1}{2}N(P)=\frac{1}{3}N(Q)$ in the case c)};
\end{tikzpicture}

In all the three cases the edge $\{(28,8), (1,0) \}$ must be of the form $y (x^4y - \alpha)^7$, corresponding to its form before the transformations. Apply then the automorphism $\phi_3$ given by $\phi_3(x) = x$, $\phi_3(y) = y + \alpha x^{-4}$, reducing the edge $\{(28,8), (0,1)\}$ to $\{(28,8),(24,7)\}$. As in Proposition ~\ref{case927}, one can analyze the possibilities for the opposite vertex $(a,b)$ in the other edge containing $(24,7)$ and obtain that $\Succ_P(-1,4) = \Succ_Q(-1,4) = (-2,7)$. We can apply \cite{GGV1}*{Proposition 8.2} and get the existence of $k \in \mathbb{N}$ with
	\[
		(k+1)b < a \hspace{1cm} \text{ and } \hspace{1cm} \{\en_{\rho,\sigma}(P), \en_{\rho,\sigma}(Q)\} = \{(-k,0), (k+1,1) \}.
	\]
	where $(a,b)$ is one of $\{(24,7), (17,5), (10,3), (3,1)\}$ and $(\rho,\sigma)$ is the direction corresponding to the edge in question, obtaining $k \in \{1,2\}$. The case $k = 2$ is impossible, as the edges of $P$ and $Q$ would have no way of being parallel. In the case $k = 1$, we can set $(\en_{\rho,\sigma}(Q), \en_{\rho,\sigma}(P))= ((2,1), (-1,0))$.
In the first two cases a) and b) we obtain
\begin{align*}
   &  N(P)=\{(-1,0),(0,0),2(28,8),2(24,7)\}\\
   & N(Q)=\{(2,1),(0,0),3(28,8),3(24,7)\},
\end{align*}

\noindent \begin{tikzpicture}[scale=0.16]
\draw(34,0)node{};
\draw[step=0.5cm,gray,very thin] (-3.3,-0.03) grid (29.3,10.3);
\draw [thick,->] (0,0)--(31,0) node[anchor=south]{$x$};
\draw [thick,->] (0,-0.3) --(0,11) node[anchor=east]{$y$};
\draw [red,thick,-] (-0.5,0)--(24,7)--(28,8)--(0,0)--(-0.5,0);
\fill[blue] (0,0) circle (5pt)
(-0.5,0) circle (5pt)
(24,7) circle (5pt)
(28,8) circle (5pt);
\draw (28,9.5) node[fill=white,right=1pt] {\tiny $2(28,8)$};
\draw (18,9) node[fill=white,right=1pt] {\tiny $2(24,7)$};
\draw (15,-3)node{$N(P)$ in the cases a) and b)};
\end{tikzpicture}
\begin{tikzpicture}[scale=0.24]
\draw(34,0)node{};
\draw[step=0.333cm,gray,very thin] (-3.3,-0.03) grid (29.3,10.3);
\draw [thick,->] (0,0)--(31,0) node[anchor=south]{$x$};
\draw [thick,->] (0,-0.3) --(0,11) node[anchor=east]{$y$};
\draw [red,thick,-] (0,0)--(0.667,0.333)--(24,7)--(28,8)--(0,0);
\fill[blue] (0,0) circle (5pt)
(0.667,0.333) circle (5pt)
(24,7) circle (5pt)
(28,8) circle (5pt);
\draw (-2,2) node[fill=white,right=1pt] {\tiny $(2,1)$};
\draw (28,9.5) node[fill=white,right=1pt] {\tiny $3(28,8)$};
\draw (19,8.5) node[fill=white,right=1pt] {\tiny $3(24,7)$};
\draw (15,-2)node{$N(Q)$ in the cases a) and b)};
\end{tikzpicture}

 whereas in the third
case c) we obtain
\begin{align*}
   & N(P)=\{(-1,0),(0,0),2(16,4),2(28,8),2(24,7)\}\\
   & N(Q)=\{(2,1),(0,0),3(16,4),3(28,8),3(24,7)\}.
\end{align*}

\noindent \begin{tikzpicture}[scale=0.16]
\draw(34,0)node{};
\draw[step=0.5cm,gray,very thin] (-3.3,-0.03) grid (29.3,10.3);
\draw [thick,->] (0,0)--(31,0) node[anchor=south]{$x$};
\draw [thick,->] (0,-0.3) --(0,11) node[anchor=east]{$y$};
\draw [red,thick,-] (-0.5,0)--(24,7)--(28,8)--(16,4)--(0,0)--(-0.5,0);
\fill[blue] (0,0) circle (5pt)
(-0.5,0) circle (5pt)
(16,4) circle (5pt)
(24,7) circle (5pt)
(28,8) circle (5pt);
\draw (28,9.5) node[fill=white,right=1pt] {\tiny $2(28,8)$};
\draw (18,9) node[fill=white,right=1pt] {\tiny $2(24,7)$};
\draw (15,-3)node{$N(P)$ in the case c)};
\end{tikzpicture}
\begin{tikzpicture}[scale=0.24]
\draw(34,0)node{};
\draw[step=0.333cm,gray,very thin] (-3.3,-0.03) grid (29.3,10.3);
\draw [thick,->] (0,0)--(31,0) node[anchor=south]{$x$};
\draw [thick,->] (0,-0.3) --(0,11) node[anchor=east]{$y$};
\draw [red,thick,-] (0,0)--(0.667,0.333)--(24,7)--(28,8)--(16,4)--(0,0);
\fill[blue] (0,0) circle (5pt)
(0.667,0.333) circle (5pt)
(16,4) circle (5pt)
(24,7) circle (5pt)
(28,8) circle (5pt);
\draw (-2,2) node[fill=white,right=1pt] {\tiny $(2,1)$};
\draw (28,9.5) node[fill=white,right=1pt] {\tiny $3(28,8)$};
\draw (19,8.5) node[fill=white,right=1pt] {\tiny $3(24,7)$};
\draw (15,-2)node{$N(Q)$ in the case c)};
\end{tikzpicture}

Apply the the morphism $\varphi$ with $\varphi(x) = x^{-1}$ and $\varphi(y) = x^4 y$. As in Proposition ~\ref{case927}, this is not an automorphism and the chain rule gives $[\varphi(P), \varphi(Q)] = -[P,Q] x^2$. A straightforward computation shows that in the cases a) and b)
 the Newton Polygons of $P$ and $Q$ become
\begin{align*}
	& N(P) = \{(0,0),(1,0),(8,14), (8,16)\} \\
	& N(Q) = \{(0,0),(2,1),(12,21), (12,24)\},
\end{align*}

\noindent \begin{tikzpicture}[scale=0.32]
\draw[step=0.5cm,gray,very thin] (-0.03,-0.03) grid (5.3,10.3);
\draw [thick,->] (0,0)--(6,0) node[anchor=north]{$x$};
\draw [thick,->] (0,-0.3) --(0,11) node[anchor=east]{$y$};
\draw [red,thick,-] (0,0) -- (0.5,0)--(4,7)--(4,8)--(0,0);
\fill[red] (0,0) circle (5pt)
(0.5,0) circle (5pt)
(4,7) circle (5pt)
(4,8) circle (5pt);
\draw (3,9) node[fill=white,right=1pt] {\tiny $(8,16)$};
\draw (4.3,7) node[fill=white,right=1pt] {\tiny $(8,14)$};
\draw[step=0.5cm,gray,very thin] (10-0.03,-0.03) grid (18.3,15.3);
\draw [thick,->] (10,0)--(19,0) node[anchor=north]{$x$};
\draw [thick,->] (10,-0.3) --(10,16) node[anchor=east]{$y$};
\draw [red,thick,-] (10,0) -- (11,0.5)--(16,10.5)--(16,12)--(10,0);
\fill[red] (10,0) circle (5pt)
(11,0.5) circle (5pt)
(16,10.5) circle (5pt)
(16,12) circle (5pt);
\draw (14,13) node[fill=white,right=1pt] {\tiny $(12,24)$};
\draw (16,9.5) node[fill=white,right=1pt] {\tiny $(12,21)$};
\draw (9,-2) node{$N(P)$ and $N(Q)$ in cases a) and b)};
\draw (22,9.5) node {};
\end{tikzpicture}
\begin{tikzpicture}[scale=0.32]
\draw[step=0.5cm,gray,very thin] (-0.03,-0.03) grid (5.3,10.3);
\draw [thick,->] (0,0)--(6,0) node[anchor=north]{$x$};
\draw [thick,->] (0,-0.3) --(0,11) node[anchor=east]{$y$};
\draw [red,thick,-] (0,0) -- (0.5,0)--(4,7)--(4,8)--(0,4)--(0,0);
\fill[red] (0,0) circle (5pt)
(0.5,0) circle (5pt)
(4,7) circle (5pt)
(0,4) circle (5pt)
(4,8) circle (5pt);
\draw (3,9) node[fill=white,right=1pt] {\tiny $(8,16)$};
\draw (4.3,7) node[fill=white,right=1pt] {\tiny $(8,14)$};
\draw[step=0.5cm,gray,very thin] (10-0.03,-0.03) grid (18.3,15.3);
\draw [thick,->] (10,0)--(19,0) node[anchor=north]{$x$};
\draw [thick,->] (10,-0.3) --(10,16) node[anchor=east]{$y$};
\draw [red,thick,-] (10,0) -- (11,0.5)--(16,10.5)--(16,12)--(10,6)--(10,0);
\fill[red] (10,0) circle (5pt)
(11,0.5) circle (5pt)
(16,10.5) circle (5pt)
(10,6) circle (5pt)
(16,12) circle (5pt);
\draw (14,13) node[fill=white,right=1pt] {\tiny $(12,24)$};
\draw (16,9.5) node[fill=white,right=1pt] {\tiny $(12,21)$};
\draw (9,-2) node{$N(P)$ and $N(Q)$ in case c)};

\end{tikzpicture}

and in the case c) it becomes
\begin{align*}
&	N(P) = \{(0,0),(1,0),(8,14), (8,16), (0,8)\} \\
&	N(Q) = \{(0,0),(2,1),(12,21), (12,24), (0,12)\}.
\end{align*}
\end{proof}

\begin{proposition}[Case (7,21)]
	If there is a counterexample to the Jacobian Conjecture in the case $(7,21)$, then there exist $P,Q \in K[x,y]$ with $[P,Q] = x$ and
\begin{align*}
	N(P) = \{(0,0),2(2,0),2(3,1), 2(0,7)\} \\
	N(Q) = \{(0,0),3(2,0),3(3,1), 3(0,7)\}
\end{align*}	
\end{proposition}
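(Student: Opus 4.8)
The plan is to imitate the proofs of Propositions~\ref{case927} and~\ref{case924}: reduce the Newton polygons by a sequence of transformations of $L^{(1)}=K[x,x^{-1},y]$, and pin down the last free corner with~\cite{GGV1}*{Proposition 8.2}.

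We start from the data recalled in Section~\ref{seccion 2}: by~\cite{GGV5} the polygons of $P$ and $Q$ have corners $\{(0,0),(1,0),(7,21),(0,7)\}$ multiplied by $(m,n)=(2,3)$ respectively, with $A_0=(7,21)$, $A_0'=(1,0)$, $A_1=(11/7,2)$ and $k=1$. By~\cite{GGV1}*{Corollary 7.4}, since $\enF=\tfrac17(7,21)$, the edge $\{(7,21),(0,7)\}$ is $y^7(xy^2-\alpha)^7$; and, by the computation recalled in Section~\ref{seccion 2} (via~\cite{GGV1}*{Proposition 2.11(3)}), the edge $\{(1,0),(7,21)\}$ through $A_1$ has the form $x(x^2y^7-\gamma_1)^2(x^2y^7-\gamma_2)$ with $\gamma_1\neq\gamma_2$. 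First I would apply the automorphism $\phi_1$ with $\phi_1(x)=y$, $\phi_1(y)=x$, which turns the edge $\{(7,21),(0,7)\}$ into $x^7(x^2y-\alpha)^7$, and then the translation $\phi_2$ with $\phi_2(x)=x$, $\phi_2(y)=y+\alpha x^{-2}$, which collapses that edge to the single monomial $x^{21}y^7$. Tracking the images of the remaining monomials with the algorithm recalled in Proposition~\ref{case924}, and using that $[P,Q]\in K^\times$ keeps the polygons of $P$ and $Q$ proportional, one reduces to a polygon with a single undetermined vertex; then, exactly as in Proposition~\ref{case927}, \cite{GGV1}*{Proposition 8.2} together with the attendant divisibility condition and the hypothesis $k=1$ select a single configuration, leaving the quadrilateral $\{(0,0),(-2,0),(0,1),(21,7)\}$ (times $(m,n)$) for the polygons of $P$ and $Q$.

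Finally I would apply the morphism $\varphi$ with $\varphi(x)=x^{-1}$, $\varphi(y)=x^3y$. This is an automorphism of $L^{(1)}$ but not of $K[x,y]$, and by the chain rule $[\varphi(P),\varphi(Q)]=-[P,Q]x$, so after rescaling we may assume $[\varphi(P),\varphi(Q)]=x$. Since $\varphi$ acts on exponents by $(a,b)\mapsto(3b-a,b)$, it sends $\{(0,0),(-2,0),(0,1),(21,7)\}$ to $\{(0,0),(2,0),(3,1),(0,7)\}$, which is contained in the first quadrant; hence $\varphi(P),\varphi(Q)\in K[x,y]$, and multiplying by $(m,n)$ gives
\[
	N(P)=\{(0,0),2(2,0),2(3,1),2(0,7)\},\qquad N(Q)=\{(0,0),3(2,0),3(3,1),3(0,7)\},
\]
as claimed.

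The step I expect to be the main obstacle is showing that the polygon reached after $\phi_1$ and $\phi_2$ is exactly that quadrilateral. Unlike in the case $(9,27)$, the edge through $A_1$ here carries two distinct factors $x^2y^7-\gamma_i$, and a direct expansion of its image under $\phi_2$ produces a priori a spurious vertex near $(7,0)$ --- the constant-in-$y$ part of the transformed edge reaches $x$-degree $7$ --- which would give a negative $x$-power after $\varphi$ and contradict the statement. One must therefore show, using $[P,Q]\in K^\times$ to force the offending coefficient to vanish (and, if needed, one further translation $y\mapsto y-\alpha x^{-2}$), that this vertex is not actually present. Establishing this cancellation is the heart of the argument.
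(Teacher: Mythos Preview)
Your outline---swap $x\leftrightarrow y$, kill one edge by a translation $y\mapsto y+\lambda x^{-2}$, then apply $\varphi:x\mapsto x^{-1},\ y\mapsto x^3y$---is exactly the paper's. But two points are off.

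First, the datum you quote is wrong: the paper uses $\en_{7,-2}(F)=\tfrac{5}{7}(7,21)=(5,15)$ (this is the direction of the edge $\{(1,0),(7,21)\}$, i.e.\ the one through $A_1$), hence $q=7$, not $\enF=\tfrac17(7,21)$. With $q=7$, Corollary~7.4 of \cite{GGV1} says that at the \emph{adjacent} direction---after the swap this is $(1,-2)$, the edge $\{(7,0),(21,7)\}$---one has $\ell_{1,-2}(P)=\lambda R^{qm}=\lambda R^{14}$. Since that edge has exactly $14$ lattice steps, $R$ has a single linear factor and $\ell_{1,-2}(P)=\lambda_p\,x^{14}(x^2y-\lambda)^{14}$. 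So the edge collapses after one translation; no case split is needed.

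Second, the ``main obstacle'' you flag is not there. The translation $\phi_2:y\mapsto y+\lambda x^{-2}$ moves support along the vector $(-2,-1)$, and $v_{-2,7}(-2,-1)=-3<0$; hence $\ell_{-2,7}$ is invariant under $\phi_2$. The edge through $A_1$ (after the swap this is $\{(0,1),(21,7)\}$, direction $(-2,7)$) is therefore \emph{preserved verbatim}, two factors and all---it is simply carried along, and plays no role in the reduction. There is no spurious vertex near $(7,0)$: the only contributions to $v_{1,-2}=14$ come from the right edge, which collapses to the single monomial at $(21,7)$. Consequently the polygon after $\phi_2$ is immediately $\{(-2,0),(0,0),(21,7),(0,1)\}$, with no appeal to Proposition~8.2 or any divisibility table; the paper's proof is correspondingly three lines long after the swap. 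Your final application of $\varphi$ is then correct and finishes the argument.
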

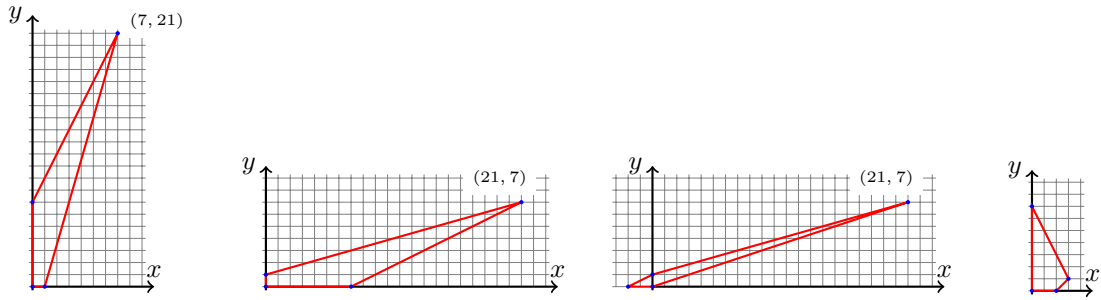
\begin{figure}[htb]
\centering
 \begin{tikzpicture}[scale=0.16]
\draw(15,0)node{};
\draw[step=1cm,gray,very thin] (-0.3,-0.03) grid (9.3,21.3);
\draw [thick,->] (0,0)--(10,0) node[anchor=south]{$x$};
\draw [thick,->] (0,-0.3) --(0,22.5) node[anchor=east]{$y$};
\draw [red,thick,-] (0,0)--(0,7)--(7,21)--(1,0)--(0,0);
\fill[blue] (0,0) circle (5pt)
(0,7) circle (5pt)
(7,21) circle (5pt)
(1,0) circle (5pt);
\draw (7,22) node[fill=white,right=1pt] {\tiny $(7,21)$};
\end{tikzpicture}
 \begin{tikzpicture}[scale=0.16]
\draw(27,0)node{};
\draw[step=1cm,gray,very thin] (-0.3,-0.03) grid (23.3,9.3);
\draw [thick,->] (0,0)--(24,0) node[anchor=south]{$x$};
\draw [thick,->] (0,-0.3) --(0,10) node[anchor=east]{$y$};
\draw [red,thick,-] (0,0)--(7,0)--(21,7)--(0,1)--(0,0);
\fill[blue] (0,0) circle (5pt)
(7,0) circle (5pt)
(21,7) circle (5pt)
(0,1) circle (5pt);
\draw (16,9) node[fill=white,right=1pt] {\tiny $(21,7)$};
\end{tikzpicture}
\begin{tikzpicture}[scale=0.16]
\draw(27,0)node{};
\draw[step=1cm,gray,very thin] (-3.3,-0.03) grid (23.3,9.3);
\draw [thick,->] (0,0)--(24,0) node[anchor=south]{$x$};
\draw [thick,->] (0,-0.3) --(0,10) node[anchor=east]{$y$};
\draw [red,thick,-] (-2,0)--(0,0)--(21,7)--(0,1)--(-2,0);
\fill[blue] (0,0) circle (5pt)
(-2,0) circle (5pt)
(21,7) circle (5pt)
(0,1) circle (5pt);
\draw (16,9) node[fill=white,right=1pt] {\tiny $(21,7)$};
\end{tikzpicture}
\begin{tikzpicture}[scale=0.16]
\draw[step=1cm,gray,very thin] (-0.3,-0.03) grid (4.3,9.3);
\draw [thick,->] (0,0)--(5,0) node[anchor=south]{$x$};
\draw [thick,->] (0,-0.3) --(0,10) node[anchor=east]{$y$};
\draw [red,thick,-] (0,0)--(2,0)--(3,1)--(0,7)--(0,0);
\fill[blue] (0,0) circle (5pt)
(2,0) circle (5pt)
(0,7) circle (5pt)
(3,1) circle (5pt);
\end{tikzpicture}
\caption{The transformation of $\frac 12 N(P)=\frac 13 N(Q)$}
\end{figure}

\begin{proof}
The corners of the polygons of $P$ and $Q$ in this case are $\{ (0,0), (1,0), (7,21), (0,7) \}$ multiplied by $(m,n) = (2,3)$ respectively (This time, $\en_{7,-2} F = \frac{5}{7}(7,21)$ for the relevant direction, and so $q=7$).
After the automorphism $\phi_1$ with $\phi_1(x) = y$ and $\phi_1(y) = x$, we have the polygon $\{ (0,0), (7,0), (21,7), (0,1) \}$. \\
Since $q=7$ for the direction $(\rho,\sigma)=(-2,7)$, by~\cite{GGV1}*{Corollary 7.4} we know that
$$
\ell_{1,-2}(P)=x^{28}\lambda_p (y-\lambda x^{-2})^{14}\quad\text{and}\quad\ell_{1,-2}(Q)=x^{42}\lambda_q (y-\lambda x^{-2})^{21}.
$$
Now we apply the automorphism $\phi_2$ of $L^{(1)}$ given by $\phi_2(x) = x$, $\phi_2(y) = y +\lambda x^{-2}$ and obtain the polygon $\{ (-2,0), (0,0), (21,7), (0,1) \}$
Finally apply the automorphism $\phi_3$ of $L^{(1)}$ given by $\phi_3(x) = x^{-1}$, $\phi_3(y) = y x^3$ and obtain the polygon $\{ (0,0), (2,0), (3,1), (0,7) \}$, as desired.
\end{proof}
\newpage

\section{Systems of polynomial equations for $(9,24)$ and $(9,27)$}
\label{seccion 4}
In this section we will prove the following theorem.
\begin{theorem}\label{teorema impossible}
There exist no pair of polynomials $P,Q\in K[x,y]$ such that
\begin{enumerate}
  \item $[P,Q]=x+g(y)$ for some $g(y)\in K[y]$,
  \item $\en_{3,-1}(P)=\st_{1,0}(P)= 2(3,8)=(6,16)$ and $\st_{-1,1}(P)=\en_{1,0}(P)= 2(3,9)=(6,18)$,
  \item $\en_{3,-1}(Q)=\st_{1,0}(Q)= 3(3,8)=(9,24)$ and $\st_{-1,1}(Q)=\en_{1,0}(Q)= 3(3,9)=(9,27)$,
\end{enumerate}
\end{theorem}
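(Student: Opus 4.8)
The plan is to convert the three equalities in~(1)--(3) into a finite triangular system of first--order ODEs in the single variable $y$ and to show it has no solution. Write $P=\sum_{k=0}^{6}x^{k}P_{k}(y)$ and $Q=\sum_{l=0}^{9}x^{l}Q_{l}(y)$; by~(2) the support of $P_{6}$ lies in $[16,18]$ with $p_{6,16}p_{6,18}\ne 0$, by~(3) the support of $Q_{9}$ lies in $[24,27]$ with $q_{9,24}q_{9,27}\ne 0$, and the remaining $P_{k},Q_{l}$ are polynomials whose supports are cut out by $N(P)$ and $N(Q)$. Since $[P,Q]-x=g(y)\in K[y]$, the coefficient of $x^{m}$ in $[P,Q]$ is $0$ for $m\ge 2$ and $1$ for $m=1$; collecting the coefficient of $x^{m}$ gives
\[
\sum_{k+l=m+1}\bigl(kP_{k}Q_{l}'-lP_{k}'Q_{l}\bigr)=\delta_{m,1}\qquad(m=14,13,\dots,1),
\]
and, notably, none of these equations sees $g$, so its precise form is irrelevant.

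First I would exploit the top equation $m=14$, namely $6P_{6}Q_{9}'=9P_{6}'Q_{9}$: it integrates to $Q_{9}^{2}=c\,P_{6}^{3}$, and comparing order and degree at $y=0$ (using the four nonvanishing leading/trailing coefficients) forces $P_{6}=p\,y^{16}(y-\alpha)^{2}$ and $Q_{9}=q\,y^{24}(y-\alpha)^{3}$ with $\alpha\ne 0$. Then I would descend one level at a time: with $P_{6},Q_{9}$ fixed, the equation at level $m$, read as an equation for the unknown $Q_{m-5}$, is the linear ODE $6P_{6}Q_{m-5}'-(m-5)P_{6}'Q_{m-5}=(\text{known from higher levels})$, whose homogeneous solutions are the polynomial solutions of $Q^{6}=c\,P_{6}^{m-5}$; a one--line divisibility check shows these exist only when $3\mid(m-5)$ and only of the predicted monomial shape. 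So the descent pins down all the $P_{k},Q_{l}$ up to one new scalar parameter roughly every third step, and terminates in a finite computation.

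The directions $(1,0)$ and $(-1,1)$ in~(2)--(3) are exactly the ones that are transversal for this bracket: the degree of $[\ell_{1,0}(P),\ell_{1,0}(Q)]$ in the $v_{1,0}$--grading is $6+9-1=14$, and the degree of $[\ell_{-1,1}(P),\ell_{-1,1}(Q)]$ in the $v_{-1,1}$--grading is $v_{-1,1}(\en_{-1,1}(P))+v_{-1,1}(\en_{-1,1}(Q))$, both strictly larger than the degree of $x+g(y)$ in that grading (a routine check bounding $\deg g$); hence both brackets vanish, which reproves the proportionality used above at the vertex $(6,16)$ and gives a matching proportionality pinning the top--left vertex data. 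The direction $(3,-1)$ is the exceptional one: there the degree of $[\ell_{3,-1}(P),\ell_{3,-1}(Q)]$ equals $2+3-(3-1)=3=v_{3,-1}(x)$, so instead of vanishing the bracket must equal $x$. Writing $\ell_{3,-1}(P)=xy\,\phi_{P}(z)$ and $\ell_{3,-1}(Q)=x\,\phi_{Q}(z)$ with $z=xy^{3}$, $\deg\phi_{P}=5$, $\deg\phi_{Q}=8$, this is a Wronskian--type identity $a\,\phi_{P}\phi_{Q}'-b\,\phi_{P}'\phi_{Q}=\text{monomial}$ of the sort analysed in~\cite{GGV2}, whose solution set is very small; feeding its solutions, together with $Q_{9}^{2}=c\,P_{6}^{3}$, into the descended system over--determines the coefficients and produces a contradiction (in practice one of the low--level equations $m=1,2$ becomes unsatisfiable). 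If the bracket at $(3,-1)$ happens to vanish, one descends one extra level there as well and argues the same way.

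The main obstacle is the bookkeeping rather than any single idea: the triangular system has many scalar unknowns and equations, and the art is to keep the number of live parameters small at every level (the non--perfect--power obstruction above is what makes this possible) and to organise the elimination so that the inconsistency surfaces by hand. This is the same style of argument as in~\cite{GGV3}; it is precisely a computation of this type that, in the single remaining case of maximal degree $108$, we were unable to push through, which is why that case is left open.
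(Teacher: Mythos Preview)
Your descent is not triangular. At level $m$ with $8\le m\le 13$ the coefficient of $x^{m}$ in $[P,Q]$ contains both the term $6P_{6}Q_{m-5}'-(m-5)P_{6}'Q_{m-5}$ and the term $(m-8)P_{m-8}Q_{9}'-9P_{m-8}'Q_{9}$, and both $Q_{m-5}$ and $P_{m-8}$ are new at that level; so the ``known from higher levels'' inhomogeneity in your ODE for $Q_{m-5}$ already contains the unknown $P_{m-8}$. You have fourteen equations for seventeen polynomial unknowns $P_{0},\dots,P_{6},Q_{0},\dots,Q_{9}$, and nothing in your outline explains how the polynomiality constraint alone closes that gap. (Your analysis of $\ell_{3,-1}(P)$ also presupposes $\st_{3,-1}(P)=(1,1)$, which is not among the hypotheses of the theorem.)

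The paper repairs this with a device your plan omits: since $P_{6}=C_{3}^{2}$ with $C_{3}=y^{8}(y+1)$, one constructs $C\in K[y,C_{3}^{-1}]((x^{-1}))$ with $C^{2}=P$ and shows (after a harmless normalization) that $Q=C^{3}+\lambda C^{-1}+F$ with $v_{1,0}(F)=-4$. This collapses the two families $\{P_{k}\}$, $\{Q_{l}\}$ into the single family $\{C_{k}\}$, whose recursion \emph{is} triangular. A first-order ODE coming from $[P,Q]\equiv x\pmod{K[y]}$ then determines $F_{-4}$ explicitly, with $f:=C_{3}^{2}F_{-4}=y(y+1)g(y)$ for a specific separable quartic $g$. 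Passing to the polynomials $D_{k}=C_{k}C_{3}^{5-2k}$, performing the shift $x\mapsto x-D_{2}$ to kill the $x^{2}$-coefficient, and eliminating $d_{-2},\dots,d_{-10}$ (with a CAS) yields the single relation
\[
18\,C_{3}^{23}\,d_{1}\,d_{-1}^{6}\,F_{-4}+8\,C_{3}^{69}\,F_{-4}^{3}+27\,d_{0}\,d_{-1}^{9}=0.
\]
The directions $(3,-1)$ and $(-1,1)$ enter not through a Wronskian identity on $\ell_{3,-1}$ but only through the valuation bounds $v_{3,-1}(C)\le 1$ and $v_{-1,1}(C)\le 6$; these translate into exact $y$-divisibility and degree bounds on $d_{1},d_{0},d_{-1}$, and a short divisibility-versus-degree argument on the displayed relation (using that $g$ is separable and coprime to $y(y+1)$) gives the contradiction. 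The endgame is thus an explicit algebraic identity plus arithmetic in $K[y]$, not the open-ended ``one of the low-level equations becomes unsatisfiable'' that you propose.
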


So we assume that such a pair exists, and we will  arrive at a contradiction.
By~\cite{GGV1}*{Propositions 1.13 and 2.1} there exists a $(1,0)$-homogeneous $R$ with $\ell_{1,0}(P)= R^2$ and $\ell_{1,0}(Q)=R^3$. Necessarily $R=x^3 C_3$ with $C_3= y^8(a_0+a_1 y)$
for some $a_0,a_1\in K^{\times}$,
and by a linear change of variables we may also assume that $C_3= y^8 (y+1)$.

\begin{proposition}\label{calculo de C}
 There exist $C,F\in K[y,C_3^{-1}]((x^{-1}))$
with $v_{1,0}(F)=-4$ and $\alpha_2,\alpha_1,\alpha_0,\alpha_{-1}\in K$ such that
  \begin{enumerate}
    \item $P=C^2$ and $Q=C^3 +\alpha_2 C^2+\alpha_1 C+\alpha_0+\alpha_{-1}C^{-1}+F$,
    \item $\ell_{1,0}(C)=x^3 C_3$,
    \item $v_{-1,1}(C)=v_{-1,1}(3,9)=6$, if we consider $K[y,C_3^{-1}]\subset K((y^{-1}))$,
  \item $v_{3,-1}(C)=v_{3,-1}(3,8)=1$, if we consider $K[y,C_3^{-1}]\subset K((y))$.
  \end{enumerate}
\end{proposition}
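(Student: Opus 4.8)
The plan is to extract a square root $C$ of $P$ inside the completed ring $\mathcal{L}:=K[y,C_3^{-1}]((x^{-1}))$, and then to strip off from $Q$ the powers $C^3,C^2,C,1,C^{-1}$ one at a time, using the smallness of $[P,Q]$ to control the $(1,0)$-leading parts at each step.

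For the construction of $C$, write $P=x^6C_3^2+p_5x^5+\dots+p_0$ with $p_i\in K[y]$, so that $P=x^6C_3^2\bigl(1+\sum_{j=1}^{6}C_3^{-2}p_{6-j}x^{-j}\bigr)$. Since $C_3$ is a unit of $K[y,C_3^{-1}]$ and $2\in K^{\times}$, the binomial series for the square root of the bracketed unit of $\mathcal{L}$ converges $x^{-1}$-adically, giving $C\in\mathcal{L}$ with $C^2=P$ and $\ell_{1,0}(C)=x^3C_3=R$; this is item (2) and the first half of item (1). Note that $v_{1,0}(C)=3$, that $C$ is a unit of $\mathcal{L}$ (so $C^{-1}\in\mathcal{L}$), and that $[C,C^e]=0$ for every $e\in\mathbb{Z}$. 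From $[P,Q]=[C^2,Q]=2C\,[C,Q]$ and $v_{1,0}(x+g(y))=1$ we get $[C,Q]=(x+g(y))(2C)^{-1}\in\mathcal{L}$ with $v_{1,0}([C,Q])=1-3=-2$.

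The key computation in $\mathcal{L}$ is: if $d:=v_{1,0}(u)$ and $\ell_{1,0}(u)=x^d h(y)$ with $h\ne0$, then $v_{1,0}([C,u])\le d+2$ and the homogeneous component of $[C,u]$ of $x$-degree $d+2$ equals $x^{d+2}\bigl(3C_3h'-dC_3'h\bigr)$; this component vanishes exactly when $h'/h=\tfrac d3\,C_3'/C_3$, which, since an element of $K[y,C_3^{-1}]$ has poles only at $0$ and $-1$, forces $3\mid d$ and $h=\lambda C_3^{d/3}$ for a $\lambda\in K^{\times}$, i.e.\ $\ell_{1,0}(u)=\lambda\,\ell_{1,0}(C^{d/3})$. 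Now $\ell_{1,0}(Q)=R^3=\ell_{1,0}(C^3)$, so $Q-C^3$ has $v_{1,0}\le8$ and, as $[C,C^3]=0$, still $v_{1,0}([C,\,\cdot\,])=-2$. Repeating the computation: whenever the current element $u$ has $v_{1,0}(u)\notin3\mathbb{Z}$ we would get $v_{1,0}([C,u])=v_{1,0}(u)+2\ne-2$, impossible; hence the $v_{1,0}$-degree descends only through $6,3,0,-3$, and at each of these values the leading part must equal $\lambda\,\ell_{1,0}(C^{e})$ with $e=2,1,0,-1$ (again so that the bracket keeps $v_{1,0}$-degree $-2$). Subtracting the corresponding $\alpha_2C^2$, then $\alpha_1C$, then $\alpha_0$, then $\alpha_{-1}C^{-1}$, with $\alpha_i\in K$ (possibly $0$) and using $[C,C^{e}]=0$ at each step to preserve $v_{1,0}([C,\,\cdot\,])=-2$, we reach
\[
F:=Q-C^3-\alpha_2C^2-\alpha_1C-\alpha_0-\alpha_{-1}C^{-1}\in\mathcal{L},\qquad v_{1,0}(F)\le-4,
\]
which is item (1). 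Since $[C,F]=[C,Q]$ has $v_{1,0}=-2$, if $v_{1,0}(F)=d<-4$ the computation would give $v_{1,0}([C,F])\le d+2<-2$, a contradiction; hence $v_{1,0}(F)=-4$. For items (3) and (4), once $K[y,C_3^{-1}]$ is embedded in $K((y^{-1}))$, resp.\ in $K((y))$, the maps $v_{-1,1}$ and $v_{3,-1}$ are valuations taking finite values on $C$ (part of the standard analysis of $(1,0)$-roots in \cite{GGV1}), so $P=C^2$ and hypothesis (2) of Theorem~\ref{teorema impossible} give $2v_{-1,1}(C)=v_{-1,1}(P)=v_{-1,1}(6,18)=12$ and $2v_{3,-1}(C)=v_{3,-1}(P)=v_{3,-1}(6,16)=2$, i.e.\ $v_{-1,1}(C)=6=v_{-1,1}(3,9)$ and $v_{3,-1}(C)=1=v_{3,-1}(3,8)$.

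The main obstacle is the bracket computation in the third paragraph: one must pin down the top $x$-degree component of $[C,u]$ and characterise its vanishing through $\ell_{1,0}(u)$ being a \emph{scalar} multiple of a power of $C$. It is exactly this scalar-multiple phenomenon that forces $\alpha_2,\alpha_1,\alpha_0,\alpha_{-1}\in K$ (rather than merely in $K[y,C_3^{-1}]$), and it is the smallness $v_{1,0}([C,Q])=-2$ --- a consequence of $[P,Q]=x+g(y)$ --- that triggers the five successive cancellations and forces $v_{1,0}(F)=-4$. A secondary point is to check that $v_{-1,1}$ and $v_{3,-1}$ really are finite on $C$, where the explicit shape $C_3=y^8(y+1)$ and the embeddings into $K((y^{\mp1}))$ are used.
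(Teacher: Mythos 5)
Your construction of $C$ via the binomial series and your handling of item (1) are sound, and the bracket computation is a nice, self-contained replacement for the paper's appeal to \cite{GGV1}*{Propositions 1.13 and 2.1} together with \cite{GGV3}*{Section 1}. Computing the top $x$-degree component of $[C,u]$ as $x^{d+2}(3C_3h'-dC_3'h)$ and characterising its vanishing by $h=\lambda C_3^{d/3}$ does pin down the scalar $\alpha_i\in K$, and the descent through $v_{1,0}\in\{6,3,0,-3\}$ followed by the observation that $v_{1,0}(F)<-4$ would force $v_{1,0}([C,Q])<-2$ is exactly right.

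The gap is in items (3) and (4). You deduce $v_{-1,1}(C)=6$ from ``$2v_{-1,1}(C)=v_{-1,1}(P)=12$'', but that step presupposes both that $v_{-1,1}$ is \emph{finite} on $C$ and that it is \emph{multiplicative} there, and neither holds automatically on $K((y^{\mp1}))((x^{-1}))$: $v_{\rho,\sigma}$ defined as a supremum over the support is only subadditive, and the supremum can be $+\infty$ unless one has a uniform bound $v_{\rho,\sigma}(x^kC_k)\le \text{const}$ over all $k$. That uniform bound is precisely what has to be proved, and it is not ``part of the standard analysis'' in \cite{GGV1} in a directly quotable form. The paper establishes it by constructing $C$ coefficient-by-coefficient and propagating the inequalities $v_{-1,1}(x^{3-j}C_{3-j})\le 6$ and $v_{3,-1}(x^{3-j}C_{3-j})\le 1$ through the inductive formula for $C_{3-k}$, using $v_{-1,1}(C_3^{-1})=-9$, $v_{3,-1}(C_3^{-1})=8$, and the Newton-polygon bounds $v_{-1,1}(P_{6-k}x^{6-k})\le 12$, $v_{3,-1}(P_{6-k}x^{6-k})\le 2$ coming from hypothesis (2) of Theorem~\ref{teorema impossible}. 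Your binomial-series construction can be made to yield the same bound --- one checks $v_{-1,1}(\epsilon)\le 0$ and $v_{3,-1}(\epsilon)\le 0$ for $\epsilon=\sum_{j=1}^{6}C_3^{-2}P_{6-j}x^{-j}$ using those same Newton-polygon inequalities, then uses subadditivity to bound each $\epsilon^k$, and finally reads off equality from the leading term $x^3C_3$ --- but as written you assert the conclusion rather than carry out this estimate, and you acknowledge the omission yourself at the end. As it stands this is the missing step; once supplied, your argument and the paper's agree in content, with yours packaging the induction into the binomial expansion.
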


\begin{proof}
We will construct inductively
$$
C=x^3 C_3+x^2 C_2+x C_1 +C_0+ x^{-1}C_{-1}+\dots,
$$
with $C_k\in K[y,C_3^{-1}]$ such that $C^2=P$ and such that items~(3) and~(4) are satisfied. It suffices to find $C_{3-k}$ for $k\ge 0$ such that
\begin{enumerate}
  \item[$a_k$)] $\left(\left(\sum_{i=0}^{k} C_{3-i}x^{3-i}\right)^2\right)_{6-j}=P_{6-j}$ for $j=0,\dots k$,
  \item[$b_k$)] $v_{-1,1}(x^{3-j} C_{3-j})\le 6$ for $j=0,\dots, k$, if we consider $K[y,C_3^{-1}]\subset K((y^{-1}))$, and
  \item[$c_k$)] $v_{3,-1}(x^{3-j} C_{3-j})\le 1$ for $j=0,\dots, k$, if we consider $K[y,C_3^{-1}]\subset K((y))$.
\end{enumerate}

We already have
\begin{enumerate}
  \item[$a_0$)] $(x^3 C_3)^2=x^6 P_6$,
  \item[$b_0$)] $v_{-1,1}(x^3 C_3)=v_{-1,1}(3,9)=6$ and
  \item[$c_0$)] $v_{3,-1}(x^3 C_3)=v_{3,-1}(3,8)=1$.
\end{enumerate}
 Assume that we have constructed $C_3,\dots,C_{3-k+1}\in K[y,C_3^{-1}]$ such that
\begin{enumerate}
  \item[$a_{k-1}$)] $\left(\left(\sum_{i=0}^{k-1} C_{3-i}x^{3-i}\right)^2\right)_{6-j}=P_{6-j}$ for $j=0,\dots k-1$,
  \item[$b_{k-1}$)] $v_{-1,1}(x^{3-j} C_{3-j})\le 6$  for $j=0,\dots, k-1$, if we consider $K[y,C_3^{-1}]\subset K((y^{-1}))$, and
  \item[$c_{k-1}$)] $v_{3,-1}(x^{3-j} C_{3-j})\le 1$  for $j=0,\dots, k-1$, if we consider $K[y,C_3^{-1}]\subset K((y))$.
\end{enumerate}
Then we want to find $C_{3-k}$ such that
$$
\left(\left(\sum_{i=0}^{k} C_{3-i}x^{3-i}\right)^2\right)_{6-j}=P_{6-j}\quad\text{for}\quad j=0,\dots, k.
$$
But the term $2C_{3-k}x^{3-k} C_3x^3=2C_{3-k}C_3x^{6-k}$ contains the highest power of $x$ whose coefficient
is touched by $C_{3-k}$. Hence it suffices to guarantee
$$
P_{6-k}=\left(\left(\sum_{i=0}^{k} C_{3-i}x^{3-i}\right)^2\right)_{6-k}=\sum_{j=0}^{k}C_{3-j}C_{3-k+j}.
$$
So we want
$$
2C_3 C_{3-k}=-P_{6-k}-\sum_{j=1}^{k-1}C_{3-j}C_{3-k+j}
$$
and it suffices to set
\begin{equation}\label{ecuacion de C 3-k}
C_{3-k}:=-\frac{1}{2C_3}\left( P_{6-k}+\sum_{j=1}^{k-1}C_{3-j}C_{3-k+j}\right)\in K[y,C_3^{-1}],
\end{equation}
and we obtain that $a_k$) is satisfied.

If we consider $K[y,C_3^{-1}]\subset K((y^{-1}))$, we have $v_{-1,1}(C_3^{-1})=-9$, since
$$
C_3^{-1}=y^{-9}(1+y^{-1})^{-1}=y^{-9}(1-y^{-1}+y^{-2}-y^{-3}+y^{-4}-\dots)
$$
and by induction hypothesis we also have $v_{-1,1}(C_{3-j}x^{3-j})\le 6$, hence $v_{-1,1}(C_{3-j})+j-3\le 6$, and so $v_{-1,1}(C_{3-j})\le 9-j$ for $j=1,\dots ,k-1$.
This implies
$$
v_{-1,1}(C_{3-j}C_{3-k+j})\le  v_{-1,1}(C_{3-j}) v_{-1,1}(C_{3-k+j})\le 9-j+9-k+j=18-k
$$
We also have $v_{-1,1}(P_{6-k}x^{6-k})\le 12$, hence $v_{-1,1}(P_{6-k})+k-6\le 12$, and so $v_{-1,1}(P_{6-k})\le 18-k$.
Now from ~\eqref{ecuacion de C 3-k} it follows that
$$
v_{-1,1}(C_{3-k})\le -v_{-1,1}(C_3)+v_{-1,1}\left( P_{6-k}+\sum_{j=1}^{k-1}C_{3-j}C_{3-k+j}\right)\le -9+18-k=9-k,
$$
which implies $v_{-1,1}(C_{3-k}x^{3-k})\le 6$ and so~$b_k$) holds.

Similarly, if we consider $K[y,C_3^{-1}]\subset K((y))$, we have $v_{3,-1}(C_3^{-1})=8$, since
$$
C_3^{-1}=y^{-8}(1+y)^{-1}=y^{-8}(1-y+y^{2}-y^{3}+y^{4}-\dots)
$$
and by induction hypothesis we also have $v_{3,-1}(C_{3-j}x^{3-j})\le 1$, hence $v_{3,-1}(C_{3-j})+9-3j\le 1$, and so $v_{3,-1}(C_{3-j})\le -8+3j$ for $j=1,\dots ,k-1$.
This implies
$$
v_{3,-1}(C_{3-j}C_{3-k+j})\le  v_{3,-1}(C_{3-j}) v_{3,-1}(C_{3-k+j})\le -8+3j-8+3(k-j)=-16+3k
$$
We also have $v_{3,-1}(P_{6-k}x^{6-k})\le 2$, hence $v_{3,-1}(P_{6-k})+18-3k\le 2$, and so $v_{3,-1}(P_{6-k})\le -16+3k$.
Now from ~\eqref{ecuacion de C 3-k} it follows that
$$
v_{3,-1}(C_{3-k})\le v_{3,-1}(C_3^{-1})+v_{3,-1}\left( P_{6-k}+\sum_{j=1}^{k-1}C_{3-j}C_{3-k+j}\right)\le 8-16+3k=-8+3k,
$$
which implies $v_{3,-1}(C_{3-k}x^{3-k})\le 1$ and so~$c_k$) holds.

In order to finish the proof it suffices to find $F\in K[y,C_3^{-1}]((x^{-1}))$
with $v_{1,0}(F)=-4$ such that
\begin{equation}\label{igualdad para Q}
Q=C^3 +\alpha_2 C^2+\alpha_1 C+\alpha_0+\alpha_{-1}C^{-1}+F.
\end{equation}

Since $\ell_{1,0}(Q)=\ell_{1,0}(C^3)$ we have that $v_{1,0}(Q-C^3)<9$. If $v_{1,0}(Q-C^3)>-4$, then
$$
v_{1,0}(Q-C^3)+v_{1,0}(P)-v_{1,0}(1,1)>-4+6-1=1=v_{1,0}(x)= v_{1,0}([Q-C^3,P])
$$
which by~\cite{GGV1}*{Proposition 1.13} implies $[\ell_{1,0}(P), \ell_{1,0}(Q-C^3)]=0$, and so, by~\cite{GGV1}*{Proposition 2.1} and the fact that $x^3C_3$ is not the positive
power of any element of $K[x,y]$, we have
$$
\ell_{1,0}(Q-C^3)=\alpha_k (x^3 C_3)^k,
$$
for some $k$ with $-2<k<3$. Using the arguments of \cite{GGV3}*{Section 1}, we find $\alpha_{2},\alpha_{1},\alpha_{0},\alpha_{-1}\in K$ such that
$$
[\ell_{1,0}(P), \ell_{1,0}(Q-C^3-\alpha_2 C^2-\alpha_1 C-\alpha_0-\alpha_{-1}C^{-1})]\ne 0,
$$
and so, again by~\cite{GGV1}*{Proposition 1.13} we know that  $F:=Q-C^3-\alpha_2 C^2-\alpha_1 C-\alpha_0-\alpha_{-1}C^{-1}$ satisfies
$$
v_{1,0}(F)+v_{1,0}(P)-v_{1,0}(1,1)=v_{1,0}([F,P]),
$$
which implies $v_{1,0}(F)=-4$ and concludes the proof.
\end{proof}

\begin{remark}
  Note that if we replace $Q$ by $\tilde Q := Q - \alpha_2 P - \alpha_0$ and $P$ by $\tilde P:= P+\frac 23 \alpha_1$, then
$$
\tilde Q = C^3 +  \alpha_1 C +  \lambda C^{-1} + F.
$$
Moreover $\tilde C$ with $\tilde C^2=\tilde P$ is given by a series
$$
\tilde C=C+\frac 13 \alpha_1 C^{-1}+\gamma_3 C^{-3}+\gamma_5 C^{-5}+\dots
$$
and so
$$
\tilde C^3=C^3+\alpha_1 C+\tilde\gamma_1 C^{-1}+\tilde\gamma_3 C^{-3}+\dots.
$$
It follows that
$$
\tilde Q=\tilde C^3+\lambda \tilde C^{-1}+\tilde F,\quad\text{for some $\lambda\in K$}.
$$
Since  $\tilde P$, $\tilde Q$ satisfy the conditions of Theorem~\ref{teorema impossible}, we can and will assume that $\alpha_2,\alpha_1,\alpha_0$ vanish in Proposition~\ref{calculo de C}.
\end{remark}

By definition $F_{-4}\in K[y,C_3^{-1}]$.

\begin{proposition}
Set $f:=C_3^2 F_{-4}$ then $f\in K[y]$ is a separable polynomial (it has no multiple roots), $y(y+1)$ divides $f$ and $\deg(f)=6$.
\end{proposition}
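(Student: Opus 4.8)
The plan is to extract a first‑order differential equation for $f$ from the $(1,0)$‑leading part of the Jacobian identity $[P,Q]=x+g(y)$, and then read off all four claimed properties from that equation.

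First, since the bracket of any two functions of $C$ vanishes (by the chain rule $[u(C),v(C)]=u'(C)v'(C)(C_xC_y-C_yC_x)=0$) and, by the previous proposition, $Q-F$ is a $K$‑linear combination of powers of $C$, we get
$$
x+g(y)=[P,Q]=[C^2,F]=2C\,(C_xF_y-C_yF_x).
$$
Now compare the coefficients of $x^{1}$ in $K[y,C_3^{-1}]((x^{-1}))$. Since $\ell_{1,0}(C)=x^3C_3$ and $v_{1,0}(F)=-4$, the top terms of $C$ and $F$ are $x^3C_3$ and $x^{-4}F_{-4}$, so the top term of $C_xF_y-C_yF_x$ is $(3C_3F_{-4}'+4C_3'F_{-4})x^{-2}$, and one checks that no lower‑order data of $C$ or $F$ can reach the coefficient of $x^{1}$ in $2C\,(C_xF_y-C_yF_x)$. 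Hence $2C_3(3C_3F_{-4}'+4C_3'F_{-4})=1$, i.e. $6C_3^2F_{-4}'+8C_3C_3'F_{-4}=1$, which in terms of $f=C_3^2F_{-4}$ reads
$$
6f'C_3-4fC_3'=C_3 .
$$

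With $C_3=y^8(y+1)$ we have $C_3'=y^7(9y+8)$, $\gcd(C_3,C_3')=y^7$ and $\gcd(y(y+1),9y+8)=1$. To see that $f$ is a polynomial, write $f=p/C_3^m$ with $p\in K[y]$ and $\gcd(p,C_3)=1$; substitution turns the equation $6f'C_3-4fC_3'=C_3$ into $6p'C_3-(6m+4)pC_3'=C_3^{m+1}$, and if $m\ge1$ then $C_3\mid(6m+4)pC_3'$, which forces $y(y+1)\mid p$ and contradicts $\gcd(p,C_3)=1$. So $m=0$, $f\in K[y]$, and dividing $6f'C_3-4fC_3'=C_3$ by $y^7$ gives
$$
6y(y+1)f'-4(9y+8)f=y(y+1).
$$
Evaluating at $y=0$ and at $y=-1$ yields $f(0)=f(-1)=0$, so $y(y+1)\mid f$; in particular $f\ne0$ and $\deg f\ge2$. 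If $\deg f=d$ with leading coefficient $c\ne0$, the coefficient of $y^{d+1}$ on the left side of the last display is $6c(d-6)$, which (since $d+1\ge3$) must vanish, so $\deg f=6$. Finally, if $\beta$ is a root of $f$ of multiplicity $\ge2$ then $f(\beta)=f'(\beta)=0$, and the last displayed equation at $y=\beta$ forces $\beta(\beta+1)=0$; but writing $f=y^a\widetilde f$ with $\widetilde f(0)\ne0$, the left side of that equation has order exactly $a$ at $y=0$ (its coefficient of $y^a$ is $(6a-32)\widetilde f(0)\ne0$) while the right side has order $1$, whence $a=1$, and the symmetric order count at $y=-1$ shows that $-1$ is also a simple root. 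Therefore $f$ has no multiple roots.

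I expect the only delicate points to be the bookkeeping in the second paragraph — verifying that the coefficient of $x^{1}$ is exactly $2C_3(3C_3F_{-4}'+4C_3'F_{-4})$, with no contribution from lower‑order terms — together with the short divisibility argument showing that $f$ carries no power of $C_3$ in its denominator; everything after that is elementary manipulation of the differential equation.
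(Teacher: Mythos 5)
Your proof is correct, and it takes a genuinely different route from the paper's.

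The paper does not compute $[P,Q]=[C^2,F]$ directly. Instead it introduces $f_1:=C_3^3F_{-4}$, observes that
$\ell_{1,0}\bigl(Q^2-P^3-2\lambda P\bigr)=2x^5C_3^3F_{-4}$, and compares leading forms in the identity
$\ell_{1,0}([P,Q^2])=2x^{10}C_3^3$ to obtain a first-order ODE for $f_1$. It then \emph{solves} this ODE
explicitly by computer, writes down
$f_1=-\tfrac{1}{910}y^9(y+1)^2(35-42y+54y^2-81y^3+243y^4)$, and reads off the properties of
$f=f_1/C_3$ by inspection of the factored form. You instead kill $[P,Q-F]$ by the chain rule, extract the
$x^1$ coefficient of $[C^2,F]=2C(C_xF_y-C_yF_x)$, and land on $6f'C_3-4fC_3'=C_3$ directly in the
variable $f$ the proposition is about. (The two ODEs are equivalent: substitute $f_1=C_3f$ into the
paper's $6C_3f_1'-10C_3'f_1=C_3^2$ and divide by $C_3$.) From there you prove polynomiality,
the divisibility by $y(y+1)$, the degree, and separability purely qualitatively from the ODE, without
integrating it.

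Each approach buys something. Yours is more self-contained and more robust: the delicate valuation
bookkeeping you flagged does check out (the second factor of $2C(C_xF_y-C_yF_x)$ has $x$-degree
$\le -2$ while $C$ has $x$-degree $\le 3$, so only the top term of each can reach $x^1$), and you never
have to trust a CAS-produced closed form. The paper's approach, via $Q^2-P^3-2\lambda P$, has the
virtue of staying entirely inside polynomial leading forms of $P$ and $Q$ and reuses lemmas already
proved about $\ell_{1,0}$ and commutators of homogeneous elements; but it then relies on an explicit
integration and a ``one checks'' step for the separability of the degree-four cofactor, whereas your
order-of-vanishing argument at $0$ and $-1$ (using $6a-32\ne 0$ and $-6b+4\ne 0$ for integers) settles
simplicity of those two roots and the rest follows because any double root must satisfy
$\beta(\beta+1)=0$.

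One small presentational point: to invoke $[P,Q-F]=0$ cleanly you should either cite the remark that
lets you take $\alpha_2=\alpha_1=\alpha_0=0$ (so $Q=C^3+\lambda C^{-1}+F$), or note explicitly that the
chain-rule identity $[C^i,C^j]=0$ holds for all integers $i,j$ in $K[y,C_3^{-1}]((x^{-1}))$, including
$j=-1$; the computation is the same either way.
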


\begin{proof}
Note that $Q^2=C^6+2\lambda C^2+2C^3 F+\lambda^2 C^{-2}+2\lambda C^{-1}F+F^2$, and so
\begin{equation}\label{F casi esta en K[y]}
  \ell_{1,0}(Q^2-P^3-2\lambda P)=\ell_{1,0}(2C^3 F)=2x^5 C_3^3 F_{-4}\in K[x,y]
\end{equation}
Moreover, $[\ell_{1,0}(P),\ell_{1,0}(F)]\ne 0$, and so
$$
[\ell_{1,0}(P),\ell_{1,0}(Q^2-P^3-2\lambda P)]\ne 0.
$$
This implies that
$$
\ell_{1,0}[P,Q^2-P^3-2\lambda P]=[\ell_{1,0}(P),\ell_{1,0}(Q^2-P^3-2\lambda P)]=[x^6 C_3^2,2x^5 C_3^3 F_{-4}]
$$
and so
\begin{equation}\label{F con cunmutador}
  2x^{10} C_3^3=\ell_{1,0}(2x Q)=\ell_{1,0}([P,Q^2])=\ell_{1,0}([P,Q^2-P^3-2\lambda P)])=[x^6 C_3^2,2x^5 C_3^3 F_{-4}]
\end{equation}
Set $f_1:= C_3^3 F_{-4}$. By~\eqref{F casi esta en K[y]} we know that $f_1\in K[y]$. From~\eqref{F con cunmutador} we obtain
$$
x^{10} C_3^3=2C_3 x^3[x^3 C_3,x^5 f_1],
$$
and so
\begin{eqnarray*}
x^7 C_3^2 &=& 2 [x^3 C_3,x^5 f_1] \\
   &=& 2[x^3,f_1]C_3 x^5+2[C_3,x^5]x^3 f_1 \\
   &=& 6x^2 [x,f_1]C_3 x^5+10x^4[C_3,x]x^3 f_1 \\
   &=& x^7 (6C_3 f_1'-10 C_3' f_1).
\end{eqnarray*}
Since $C_3'=(9y+8)y^7$, this yields
$$
y^{16}(y+1)^2=6y^8(y+1)f_1'-10(9y+8)y^7 f_1
$$
and so we arrive at the following differential equation for $f_1$:
$$
y^9(y+1)^2=6y(y+1)f_1'-10(9y+8) f_1.
$$
This equation has a unique solution that can be found using a CAS:
$$
f_1=-\frac{1}{910}y^9(y+1)^2(35-42y+54 y^2-81 y^3+243 y^4),
$$
and one also checks that neither $y$ nor $(y+1)$ divide $g(y):=-\frac{1}{910}(35-42y+54 y^2-81 y^3+243 y^4)$, which
has four different (complex) roots.
So $f:=\frac{f_1}{C_3}$ has the desired properties.
\end{proof}

The equalities
\begin{equation}\label{ecuaciones basicas}
C^2=P\quad\text{and}\quad Q=C^3+\lambda C^{-1}+F
\end{equation}
 yield a system of polynomial equations for $C_k$, similar to the systems of~\cite{GGV3}, which correspond to
the equalities
\begin{equation}\label{ecuaciones polinomiales}
P_{-k}=0\quad\text{for}\quad k=1,\dots,8\quad\text{and}\quad Q_{-k}=0\quad\text{for}\quad k=1,\dots 5.
\end{equation}

However $C_k$ in general is not a polynomial, so we will transform the system into a system for certain $D_k$'s which are polynomials.

\begin{proposition}
Set $D_k:=C_k C_3^{5-2k}$. Then $D_k\in K[y]$.
\end{proposition}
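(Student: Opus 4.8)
The plan is to prove the claim by strong induction on the index, descending from $3$ and covering every index $k\le 3$ that occurs in the series $C$, via the explicit recursion~\eqref{ecuacion de C 3-k}. Rewriting the asserted identity as $C_m = D_m C_3^{2m-5}$, the induction hypothesis is that $D_m\in K[y]$ for every index $m$ with $m>3-k$. The base case is immediate: $D_3 = C_3 C_3^{5-6} = 1\in K[y]$, and the next value $D_2 = -\frac{1}{2}P_5$ is read off directly from~\eqref{ecuacion de C 3-k} with $k=1$ (empty sum).

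For the inductive step ($k\ge 1$), multiply~\eqref{ecuacion de C 3-k} through by $C_3^{2k-1}$. Since $5-2(3-k)=2k-1$, the left-hand side becomes $D_{3-k}$, and using $\frac{1}{2C_3}\cdot C_3^{2k-1}=\frac{1}{2}C_3^{2k-2}$ the right-hand side becomes
\[
D_{3-k} = -\frac{1}{2}\left(C_3^{2k-2}\,P_{6-k} + \sum_{j=1}^{k-1} C_3^{2k-2}\,C_{3-j}\,C_{3-k+j}\right).
\]
Now substitute $C_{3-j}=D_{3-j}C_3^{1-2j}$ and $C_{3-k+j}=D_{3-k+j}C_3^{1-2k+2j}$, which is legitimate because $1\le j\le k-1$ forces both indices $3-j$ and $3-k+j$ to be strictly larger than $3-k$, so the induction hypothesis applies. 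The total power of $C_3$ carried by the product $C_{3-j}C_{3-k+j}$ equals $(1-2j)+(1-2k+2j)=2-2k$, which cancels the prefactor $C_3^{2k-2}$; hence $C_3^{2k-2}C_{3-j}C_{3-k+j}=D_{3-j}D_{3-k+j}$ and
\[
D_{3-k} = -\frac{1}{2}\left(C_3^{2k-2}\,P_{6-k} + \sum_{j=1}^{k-1} D_{3-j}\,D_{3-k+j}\right).
\]
The summands lie in $K[y]$ by the induction hypothesis; the remaining term does too, because $2k-2\ge 0$ for $k\ge 1$ makes $C_3^{2k-2}$ a genuine polynomial and $P_{6-k}$ is the coefficient of $x^{6-k}$ in $P\in K[x,y]$ (equal to $0$ when $6-k<0$). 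Therefore $D_{3-k}\in K[y]$, which closes the induction.

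I do not expect any real obstacle: the argument is entirely exponent bookkeeping, and the point is that the exponent $5-2k$ in the definition of $D_k$ is tuned so that every product $C_aC_b$ with $a+b=6-k$ contributes the same deficit $C_3^{2-2k}$ in powers of $C_3$, which is exactly compensated by the single factor $C_3^{2k-2}$ produced by the $\frac{1}{2C_3}$ in~\eqref{ecuacion de C 3-k}, while the inhomogeneous term $P_{6-k}$ is only multiplied by the nonnegative power $C_3^{2k-2}$.
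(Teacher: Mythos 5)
Your proof is correct and takes essentially the same route as the paper: both multiply the defining recursion for $C_{3-k}$ by the appropriate power of $C_3$ and observe that the exponent $5-2k$ makes every product $D_j D_{k+3-j}$ coincide with $C_3^{4-2k}C_j C_{k+3-j}$, with the inhomogeneous term picking up only a nonnegative power of $C_3$. The only difference is cosmetic: you work directly from~\eqref{ecuacion de C 3-k} (inheriting the sign as printed there, which gives $D_2=-\tfrac12 P_5$ rather than the paper's $\tfrac12 P_5$), while the paper re-derives the same relation from $P_{k+3}=\sum_{j}C_jC_{k+3-j}$.
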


\begin{proof}
  By definition $D_3=1$ and $D_2=C_2 C_3=\frac 12 P_5$, so $D_1, D_2\in K[y]$.
  Now assume that $D_j\in K[y]$ for $j>k$. Then
  $$
  P_{k+3}=\sum_{j=k}^{3} C_j C_{k+3-j}=2C_3 C_k+\sum_{j=k+1}^{2} C_j C_{k+3-j}.
  $$
  Note that $P_k\in K[y]$ for all $k$. In fact, for $k<0$, $P_k=0\in K[y]$.
  Then
  $$
  D_k=C_3^{5-2k}C_k=\frac 12\left(P_{k+3}-\sum_{j=k+1}^{2} C_j C_{k+3-j} \right)C_3^{4-2k}=\frac 12 P_{k+3}C_3^{4-2k}-\sum_{j=k+1}^{2} D_j D_{k+3-j},
  $$
  since
  $$
  D_j D_{k+3-j}=C_j C_{k+3-j}C_3^{5-2j}C_3^{5-2(k+3-j)}=C_j C_{k+3-j}C_3^{4-2k}.
  $$
  This shows inductively that $D_k\in K[y]$.
\end{proof}

If we set $D:=\sum_{k\le 3} x^k D_k$, then
$$
(D^2)_{-k}=\sum_{\overset{i,j\le 3}{i+j=-k}}D_i D_j=\sum_{\overset{i,j\le 3}{i+j=-k}}C_i C_3^{5-2i}C_jC_3^{5-2j}=\sum_{\overset{i,j\le 3}{i+j=-k}}C_i C_jC_3^{10-2(i+j)}=(C^2)_{-k}C_3^{10+2k}
$$
and
$$
(D^3)_{-k}=\sum_{\overset{i,j,l\le 3}{i+j+l=-k}}D_i D_jD_l=\sum_{\overset{i,j,l\le 3}{i+j+l=-k}}C_i C_jC_lC_3^{15-2(i+j+l)}=(C^3)_{-k}C_3^{15+2k}.
$$
Now the equalities~\eqref{ecuaciones polinomiales} imply that the following equalities
\begin{equation}\label{ecuaciones polinomiales nuevas}
(D^2)_{-k}=0\quad\text{for}\quad k=1,\dots,8,\quad (D^3)_{-1}=0,\quad (D^3)_{-2}=0,
\end{equation}
$$
(D^3)_{-3}+\lambda C_3^{20}=0\quad\text{and}\quad  (D^3)_{-4}-\lambda D_2 C_3^{20}+F_{-4}C_3^{23}=0
$$
hold. Note that we don't use the equality $Q_{-5}=0$.

So there exists $D,\hat F\in K[y]((x^{-1}))$, such that $\ord_{x^{-1}}(\hat F)\ge 5$, $\hat P:= D^2\in K[x,y]$ and
$$
\hat Q:=D^3+D^{-1}\lambda C_3^{20}+x^{-4}F_{-4}C_3^{23}+\hat F\in K[x,y].
$$
In fact, $P=C^2\in K[x,y]$ implies that $(C^2)_{-k}=0$ for all $k>0$, hence $(D^2)_{-k}=0$ for all $k>0$, so $\hat P\in K[x,y]$;
on the other hand the equalities~\eqref{ecuaciones polinomiales nuevas} imply that
$(\hat Q)_{-k}=0$ for $k=1,2,3,4$. Now set
$$
(\hat F)_{-k}:=-(D^3+D^{-1}\lambda C_3^{20}+F_{-4}C_3^{23})_{-k}\quad\text{for $k\ge 5$}
$$
which implies that $\hat Q\in K[x,y]$, as desired.

Consider the automorphism $\varphi$ of $K[y]((x^{-1}))$ given by $\varphi(y)=y$, $\varphi(x)=x-D_2$ (and so $\varphi(x^{-1})=x^{-1}+x^{-2}D_2+x^{-3}D_2^2+x^{-4}D_2^3+\dots$).
Then $\varphi(\hat P),\varphi(\hat Q)\in K[x,y]$, and if we set $\tilde D:=\varphi(D)$, then
$$
\tilde D=x^3+x d_1 +d_0+x^{-1}d_{-1}+x^{-2}d_{-2}+\dots
$$
for some $d_i\in K[y]$ (note that $d_2=0$), and the conditions $\varphi(\hat P),\varphi(\hat Q)\in K[x,y]$ imply the equalities
$$
(\tilde D^2)_{-k}=0\quad\text{for}\quad k=1,\dots,8\quad\text{and}\quad (\tilde D^3+\tilde D^{-1}\lambda C_3^{20}+x^{-4}F_{-4}C_3^{23})_{-j}=0\quad\text{for}\quad j=1,\dots 4.
$$
Note that we can take $x^{-4}$ instead of $\varphi(x^{-4})$ and that we can ignore the term $\varphi(\hat F)$, since we only consider $j\le 4$.
So we obtain the following equalities:
\begin{align*}
  0=(\tilde D^2)_{-1} & = 2 d_0 d_{-1} + 2 d_1 d_{-2} + 2 d_{-4}\\
0=(\tilde D^2)_{-2} &= d_{-1}^2 + 2 d_0 d_{-2} + 2 d_1 d_{-3} + 2 d_{-5}\\
0=(\tilde D^2)_{-3} &= 2 d_{-1} d_{-2} + 2 d_0 d_{-3} + 2 d_1 d_{-4} + 2 d_{-6}\\
0=(\tilde D^2)_{-4} &= d_{-2}^2 + 2 d_{-1} d_{-3} + 2 d_0 d_{-4} + 2 d_1 d_{-5} + 2 d_{-7}\\
0=(\tilde D^2)_{-5} &= 2 d_{-2} d_{-3} + 2 d_{-1} d_{-4} + 2 d_0 d_{-5} + 2 d_1 d_{-6} + 2 d_{-8}\\
0=(\tilde D^2)_{-7} &= 2 d_{-1}0 + 2 d_{-3} d_{-4} + 2 d_{-2} d_{-5} + 2 d_{-1} d_{-6} + 2 d_0 d_{-7} +  2 d_1 d_{-8}\\
0=(\tilde Q)_{-1}&= 3 d_0^2 d_{-1} + 3 d_1 d_{-1}^2 + 6 d_0 d_1 d_{-2} + 3 d_{-2}^2 + 3 d_1^2 d_{-3} + 6 d_{-1} d_{-3} + 6 d_0 d_{-4} + 6 d_1 d_{-5}\\
&\quad + 3 d_{-7}\\
0=(\tilde Q)_{-2}&= 3 d_0 d_{-1}^2 + 3 d_0^2 d_{-2} + 6 d_1 d_{-1} d_{-2} + 6 d_0 d_1 d_{-3} +  6 d_{-2} d_{-3} + 3 d_1^2 d_{-4} + 6 d_{-1} d_{-4}\\
&\quad + 6 d_0 d_{-5} + 6 d_1 d_{-6} +  3 d_{-8}\\
0=(\tilde Q)_{-4}&= 3 d_{-10} + 3 d_{-1}^2 d_{-2} + 3 d_0 d_{-2}^2 + 6 d_0 d_{-1} d_{-3} + 6 d_1 d_{-2} d_{-3} +  3 d_0^2 d_{-4} + 6 d_1 d_{-1} d_{-4}\\
&\quad + 6 d_{-3} d_{-4} + 6 d_0 d_1 d_{-5} + 6 d_{-2} d_{-5} +  3 d_1^2 d_{-6} + 6 d_{-1} d_{-6} + 6 d_0 d_{-7} + 6 d_1 d_{-8}\\
&\quad + F_{-4}C_3^{23}.
\end{align*}
We consider this as a system of 9 equations and using a CAS (for example Mathematica) we eliminate the variables $d_{-10}, d_{-8}, d_{-7}, d_{-6}, d_{-5}, d_{-4}, d_{-3}, d_{-2}$, obtaining

\begin{equation}\label{ecuacion principal}
  18 C_3^{23} d_1 (d_{-1})^6 F_{-4} + 8 C_3^{69} F_{-4}^3 + 27 d_0 (d_{-1})^9=0.
\end{equation}

\begin{proposition}
  We have $v_{-13,-1}(D)=-39$ and $v_{17,1}(D)=51$.
\end{proposition}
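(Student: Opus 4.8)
The plan is to peel the claimed identities back to the per-coefficient valuation bounds for $C$ that are already established (explicitly in the induction behind Proposition~\ref{calculo de C}), and then to push these through the relation $D_k=C_kC_3^{5-2k}$. Since $D=\sum_{k\le 3}x^kD_k$ with each $D_k\in K[y]$, the series $D$ lies in $K[y]((x^{-1}))$, so $v_{\rho,\sigma}(D)=\sup_{k\le 3}v_{\rho,\sigma}(x^kD_k)$: there is no cancellation between distinct powers of $x$, and we will see that the relevant suprema are finite. Hence it suffices to prove $v_{17,1}(x^kD_k)\le 51$ and $v_{-13,-1}(x^kD_k)\le -39$ for every $k\le 3$, and to exhibit one value of $k$ at which equality holds. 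The obvious candidate is $k=3$, where $D_3=C_3C_3^{-1}=1$, so that $v_{17,1}(x^3)=51$ and $v_{-13,-1}(x^3)=-39$ on the nose.

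First I would record two elementary additivity facts. Working in the field $K((y^{-1}))$ (resp. $K((y))$), in which $C_3=y^8(y+1)$ is a unit with $\deg_y C_3=9$ (resp. $\ord_y C_3=8$), the additivity of $\deg_y$ and $\ord_y$ on products gives, for every $k\le 3$,
$$
\deg_y(D_k)=\deg_y(C_k)+9(5-2k),\qquad \ord_y(D_k)=\ord_y(C_k)+8(5-2k).
$$
Next I would translate Proposition~\ref{calculo de C}(3),(4). The equality $v_{-1,1}(C)=6$ says exactly that $-k+\deg_y(C_k)=v_{-1,1}(x^kC_k)\le 6$, i.e. $\deg_y(C_k)\le 6+k$, for all $k\le 3$; and $v_{3,-1}(C)=1$ says that $3k-\ord_y(C_k)=v_{3,-1}(x^kC_k)\le 1$, i.e. $\ord_y(C_k)\ge 3k-1$. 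These are precisely the inequalities $b_k$ and $c_k$ used in that proof, now read off coefficientwise.

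Then the estimate is a one-line computation: for $k\le 3$,
$$
v_{17,1}(x^kD_k)=17k+\deg_y(D_k)=\deg_y(C_k)-k+45\le (6+k)-k+45=51,
$$
$$
v_{-13,-1}(x^kD_k)=-13k-\ord_y(D_k)=3k-\ord_y(C_k)-40\le 1-40=-39,
$$
with equality attained at $k=3$ as noted above; taking suprema over $k\le 3$ yields $v_{17,1}(D)=51$ and $v_{-13,-1}(D)=-39$. I do not anticipate a genuine obstacle: the only points requiring a moment's care are that reading $v_{-1,1}(C)=6$ and $v_{3,-1}(C)=1$ coefficientwise is legitimate (it is, because $C$ is given as the explicit series $\sum x^kC_k$ and $v_{\rho,\sigma}$ is simply the supremum of $\rho i+\sigma j$ over the monomials $x^iy^j$ in its support), and that $D$ has top $x$-term $x^3$, which guarantees that both suprema above are finite and attained.
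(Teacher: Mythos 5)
Your proof is correct and takes essentially the same route as the paper: it translates the per-coefficient bounds from Proposition~\ref{calculo de C}(3),(4) (your $\deg_y(C_k)\le 6+k$ and $\ord_y(C_k)\ge 3k-1$ are the paper's ``highest power of $y$ of a term of $C_k$ is at most $y^{k+6}$'' and ``lowest power $\ge y^{3k-1}$''), then pushes them through $D_k=C_kC_3^{5-2k}$ using $\deg_y C_3=9$, $\ord_y C_3=8$ and multiplicativity, with equality at $k=3$ since $D_3=1$. The only presentational difference is that you make the $\deg_y$/$\ord_y$ bookkeeping explicit rather than folding it directly into $v_{17,1}$ and $v_{-13,-1}$, which amounts to the same computation.
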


\begin{proof}
On the one hand, if we embed $C_3^{-1}$ in $K((y))$, then by Proposition~\ref{calculo de C} we know that $v_{3,-1}(C)=1$.  Thus the lowest power of $y$ of a term of
$C_k$ is at least $y^{3k-1}$, since
$$
v_{3,-1}(x^k y^{3k-1})=3k-(3k-1)=1=v_{3,-1}(C).
$$
But then
\begin{eqnarray*}
v_{-13,-1}(D_k x^k) &=& v_{-13,-1}(C_k)+v_{-13,-1}(x^k)+v_{-13,-1}(C_3^{5-2k}) \\
   &\le & -(3k-1)-13k+(5-2k)v_{-13,-1}(C_3)\\
   &=& -16k+1+(5-2k)(-8)\\
   &=& -39.
\end{eqnarray*}
Since $v_{-13,-1}(D_3)=-39$, this proves $v_{-13,-1}(D)=-39$.

On the other hand, if we embed $C_3^{-1}$ in $K((y^{-1}))$, then by Proposition~\ref{calculo de C} we know that $v_{-1,1}(C)=6$. Thus the highest power of $y$ of a term of
$C_k$ is at most $y^{k+6}$, since
$$
v_{-1,1}(x^k y^{k+6})=-k+k+6=6=v_{-1,1}(C).
$$
But then
\begin{eqnarray*}
v_{17,1}(D_k x^k) &=& v_{17,1}(C_k)+v_{17,1}(x^k)+v_{17,1}(C_3^{5-2k}) \\
   &\le & k+6 +17k+(5-2k)v_{17,1}(C_3)\\
   &=& 18k+6+(5-2k)9\\
   &=& 51.
\end{eqnarray*}
Since $v_{17,1}(D_3)=51$, this proves $v_{17,1}(D)=51$.
\end{proof}

\begin{proof}[Proof of Theorem~\ref{teorema impossible}]
Note that $\varphi$ preserves $v_{-13,-1}$ and $v_{17,1}$, since $v_{0,-1}(D_2)\le -13$ and $v_{0,1}(D_2)\le 17$. In particular
$$
51\ge v_{17,1}(x^{3-k}d_{3-k})=51-17k +v_{17,1}(d_{3-k})
$$
 and so $v_{0,1}(d_{3-k})=v_{17,1}(d_{3-k})\le 17k$, hence
\begin{equation}\label{cota para el grado de los d}
  \deg(d_1)\le 34\quad\text{and}\quad  \deg(d_0)\le 51.
\end{equation}
This also implies that
$$
-39\ge v_{-13,-1}(x^{3-k} d_{3-k})=-39+13k+v_{-13,-1}( d_{3-k}),
$$
and so $v_{0,-1}( d_{3-k})=v_{-13,-1}( d_{3-k})\le -13k$, i.e.,
\begin{itemize}
  \item $v_{0,-1}( d_1)\le 26$ and so $y^{26}$ divides $d_1$,
  \item $v_{0,-1}( d_0)\le 39$ and so $y^{39}$ divides $d_0$,
  \item $v_{0,-1}( d_{-1})\le 52$ and so $y^{52}$ divides $d_{-1}$.
\end{itemize}
Define the polynomials $\tilde d_1$, $\tilde d_0$ and $\tilde d_{-1}$ by
$$
d_1=y^{26}\tilde d_1,\quad d_0=y^{39}\tilde d_0\quad\text{and}\quad d_{-1}=y^{52}\tilde d_{-1}.
$$
Then, using that
$$
C_3^2 F_{-4}=f=y(y+1)g(y)\quad\text{and}\quad C_3=y^8(y+1),
$$
we obtain
\begin{align*}
  C_3^{23} d_1 d_{-1}^6 F_{-4} & =(y^8(y+1))^{21} y^{26}\tilde d_1 y^{52\cdot 6}(\tilde d_{-1})^6 y(y+1)g(y)=y^{507}(y+1)^{22}\tilde d_1 (\tilde d_{-1})^6 g(y) \\
  C_3^{69} F_{-4}^3 & = (y^8(y+1))^{63}(y(y+1)g(y))^3=y^{507}(y+1)^{66}g(y)^3\\
  d_0 d_{-1}^9&= y^{507} \tilde d_0 (\tilde d_{-1})^9.
\end{align*}
Hence, from equality~\eqref{ecuacion principal} we obtain

\begin{equation}\label{nueva ecuacion principal}
  0=18 (y+1)^{22}\tilde d_1 (\tilde d_{-1})^6 g(y)+8 (y+1)^{66}g(y)^3+ 27 \tilde d_0 (\tilde d_{-1})^9.
\end{equation}
This implies that $(\tilde d_{-1})^6$ divides $(y+1)^{66}g(y)^3$, and since $g(y)$ is separable, necessarily there exists some $k$ such that $\tilde d_{-1}=(y+1)^k$.

Now we will arrive at the desired contradiction, proving that $k\ge 8$ is impossible and that $k\le 7$ is also impossible:

Assume by contradiction that
$k\ge 8$, then $(y+1)^{70}$ divides $18 (y+1)^{22}\tilde d_1 (\tilde d_{-1})^6 g(y)$ and
$(y+1)^{72}$ divides $27 \tilde d_0 (\tilde d_{-1})^9$. But the multiplicity of $(y+1)$ in  $8 (y+1)^{66}g(y)^3$
is exactly $66$, since $(y+1)$ doesn't divide $g(y)$, so~\eqref{nueva ecuacion principal} yields the contradiction.

On the other hand, if $k\le 7$, then $\deg(\tilde d_{-1})\le 7$ and so, since by~\eqref{cota para el grado de los d} we know that $\deg(\tilde d_1)\le 8$ and
$\deg(\tilde d_0)\le 12$, we obtain
$$
\deg(18 (y+1)^{22}\tilde d_1 (\tilde d_{-1})^6 g(y))\le 22+8+42+4=76
$$
and
$$
\deg(27 \tilde d_0 (\tilde d_{-1})^9)\le 12+9\cdot 7=75,
$$
which is impossible by~\eqref{nueva ecuacion principal}, since $\deg((y+1)^{66}g(y)^3)=78$. This contradiction concludes the proof of Theorem~\ref{teorema impossible}.
\end{proof}

\begin{corollary}
There exist no $P,Q \in K[x,y]$ with $[P,Q] = x$ and
	\begin{align*}
	N(P) = \{(0,0),(1,1),(6,16), (6,18), (0,18)\} \\
	N(Q) = \{(0,0),(1,0),(9,24), (9,27), (0,27)\}
	\end{align*}
\end{corollary}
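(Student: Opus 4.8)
The statement is an immediate corollary of Theorem~\ref{teorema impossible}, of which it is simply the geometric form in the case $(9,27)$; the plan is therefore to check that any pair with the prescribed data satisfies hypotheses (1)--(3) of that theorem and then invoke it.

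Concretely, I argue by contradiction: suppose $P,Q\in K[x,y]$ with $[P,Q]=x$ and the stated Newton polygons exist. Hypothesis (1) of Theorem~\ref{teorema impossible} holds with $g=0$, since $x=x+g(y)$. For hypotheses (2) and (3) one only has to identify the supporting faces of $N(P)$ and $N(Q)$ in the directions named there. From $N(P)=\{(0,0),(1,1),(6,16),(6,18),(0,18)\}$ the face in direction $(1,0)$ is the vertical edge from $(6,16)$ to $(6,18)$, so $\st_{1,0}(P)=(6,16)$ and $\en_{1,0}(P)=(6,18)$; the two edges of $N(P)$ adjacent to this vertical edge share its endpoints $(6,16)$ and $(6,18)$, which gives $\en_{3,-1}(P)=(6,16)$ and $\st_{-1,1}(P)=(6,18)$. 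Since $(6,16)=2(3,8)$ and $(6,18)=2(3,9)$ --- the points $(3,8),(3,9)$ being the corners of $R=x^3C_3$ that underlie the whole analysis of Section~\ref{seccion 4} --- this is exactly hypothesis (2). The identical reading of $N(Q)=\{(0,0),(1,0),(9,24),(9,27),(0,27)\}$, with $(9,24)=3(3,8)$ and $(9,27)=3(3,9)$, gives hypothesis (3). Hence Theorem~\ref{teorema impossible} applies, contradicting the existence of $(P,Q)$.

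Because this is a direct specialization, there is no genuine obstacle; the one point requiring attention is to make sure the conventions for $v_{\rho,\sigma}$, $\st_{\rho,\sigma}$ and $\en_{\rho,\sigma}$ used throughout~\cite{GGV1},\cite{GGV2},\cite{GGV3} are applied correctly, so that the three pairs of marked endpoints of $N(P)$ (and of $N(Q)$) are precisely the endpoints of the three consecutive supporting faces appearing in hypotheses (2) and (3). Combined with Proposition~\ref{case927}, the corollary then discards the case $(\deg P,\deg Q)=(72,108)$ coming from the family with $A_0=(9,27)$, leaving only one $(72,108)$ case open.
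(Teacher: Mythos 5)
Your proposal contains a genuine gap, and it is exactly the gap the paper's proof is designed to close. You claim that the Newton polygon data immediately gives $\st_{-1,1}(P)=(6,18)$ and $\st_{-1,1}(Q)=(9,27)$, but with $v_{-1,1}(a,b)=-a+b$ the corner $(0,18)$ of $N(P)$ has $v_{-1,1}=18>12=v_{-1,1}(6,18)$, so the supporting face of $N(P)$ in direction $(-1,1)$ is the single point $(0,18)$, not a face having $(6,18)$ as an endpoint. The same failure occurs for $Q$, where $(0,27)$ dominates $(9,27)$. Hence hypotheses (2) and (3) of Theorem~\ref{teorema impossible} do \emph{not} hold for the given $P,Q$, and the theorem cannot be invoked directly as you suggest.

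The paper's argument removes precisely this obstruction. It first shows, via the map $\psi:K[x,y]\to L^{(2)}$ making $(\psi(P),\psi(Q))$ a $(2,3)$-pair and~\cite{GGV1}*{Corollary 7.2} with $q=3$, that the top edge has the shape $\ell_{0,1}(P)=\lambda_p y^{18}(x-\lambda)^6$ (a sixth power). It then applies the automorphism $\phi(x)=x+\lambda$, $\phi(y)=y$, which collapses the top edge to the single vertex $(6,18)$ (and correspondingly $(9,27)$ for $Q$) and changes the commutator to $[\phi(P),\phi(Q)]=x+\lambda$. One must then additionally verify $\Succ_{\phi(P)}(1,0)\ge(-1,1)$ and likewise for $Q$, so that $\st_{-1,1}(\phi(P))=(6,18)$ and $\st_{-1,1}(\phi(Q))=(9,27)$ as required. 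Only after these steps are hypotheses (1)--(3) of Theorem~\ref{teorema impossible} met, with $g(y)=\lambda$ a nonzero constant. This also shows why the theorem was deliberately stated with $[P,Q]=x+g(y)$ rather than $[P,Q]=x$: the $g$ freedom is consumed by the very translation needed to bring the polygons into the required form. Your proposal, by taking $g=0$ and reading the conditions off the raw polygons, skips the essential normalization and lands on a false assertion about the $(-1,1)$-face.
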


\begin{proof}
We claim that $\ell_{0,1}(P)=\lambda_p y^18 (x-\lambda)^6$ for some $\lambda_p,\lambda\in K^{\times}$.
If the claim is true, take $\phi\in \Aut(K[x,y])$ with $\phi(y)=y$ and $\phi(x)=x+\lambda$. Then
\begin{equation}\label{sucesores}
\Succ_{\phi(P)}(1,0)\ge (-1,1)\quad\text{and}\quad \Succ_{\phi(Q)}(1,0)\ge (-1,1).
\end{equation}

Since $[\phi(P),\phi(Q)]=x+\lambda$, the polynomials $\phi(P),\phi(Q)$ satisfy the conditions of Theorem~\ref{teorema impossible},
a contradiction which concludes the proof.

In order to prove the claim and~\eqref{sucesores}, consider the map $\psi:K[x,y]\to L^{(2)}$ given by
$\psi(x)=x^{1/2}$ and $\psi(y)=y$. Then $(\psi(P),\psi(Q))$ is an $m,n$-pair for $(m,n)=(2,3)$ (see~\cite{GGV1}*{Definition 4.3}), and
$\en_{1,0}(F)=\frac 2/3 \frac {1}{m}\en_{1,0}(P)$ for $F$ as in~\cite{GGV1}*{Theorem 2.6}. Hence $q=3$ in~\cite{GGV1}*{Corollary 7.2}
and so $\ell_{0,1}(\psi(P))$ is a sixth power, hence so is $\ell_{0,1}(P)$. By the same argument, for $(\rho,\sigma)=\Succ_{\psi(\phi(P))}(0,1)$ we also have
that $\ell_{\rho,\sigma}(\psi(\phi(P)))$ is a sixth power, and since $\st_{\rho,\sigma}(\psi(\phi(P)))=(6,18)$, we know that $(\rho,\sigma)\in\{(-1,1),(-2,1),(-3,1)\}$,
which proves~\eqref{sucesores}.
\end{proof}

\section{Systems of polynomial equations for $(7,21)$}
\label{seccion 5}
In this section we will prove the following theorem.
\begin{theorem}\label{teorema impossible para 7}
There exist no pair of polynomials $P,Q\in K[x,y]$ such that $[P,Q] = x$ and
\begin{align*}
	N(P) = \{(0,0),(4,0),(2,6), (0,14)\} \\
	N(Q) = \{(0,0),(6,0),(3,9), (0,21)\}
\end{align*}	
\end{theorem}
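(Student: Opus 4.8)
The plan is to run the argument of Theorem~\ref{teorema impossible} with all of its data adapted to the present, much smaller, Newton polygons; the only substantial simplification is that the leading $(1,0)$-datum is now a monomial, so that the auxiliary polynomial $C_3$ reduces to $y$.

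Assume for a contradiction that such $P,Q$ exist. Since $N(Q)=\tfrac32N(P)$ we are in the case $(m,n)=(2,3)$, and $v_{1,0}([P,Q])=v_{1,0}(x)=1<6+9-1=v_{1,0}(P)+v_{1,0}(Q)-v_{1,0}(1,1)$, so by~\cite{GGV1}*{Propositions 1.13 and 2.1} there is a $(1,0)$-homogeneous $R$ with $\ell_{1,0}(P)=R^2$ and $\ell_{1,0}(Q)=R^3$. As $\st_{1,0}(P)=(6,2)=2(3,1)$ is a single vertex, $R=\lambda x^3y$, and after a linear change of variables we may take $R=x^3y$, so that $C_3:=y$. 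Following Proposition~\ref{calculo de C} I would construct inductively a Laurent series $C=x^3y+x^2C_2+xC_1+C_0+x^{-1}C_{-1}+\cdots$, with $C_k\in K[y,y^{-1}]$, $C^2=P$, and prescribed values under the two valuations attached to the two slanted sides of $N(P)$; then, using~\cite{GGV1}*{Propositions 1.13 and 2.1}, the arguments of~\cite{GGV3}*{Section 1}, and the normalization of the Remark after Proposition~\ref{calculo de C} to remove the $C^2$, $C$ and constant terms, I would obtain $\lambda\in K$ and $F$ with $v_{1,0}(F)=-4$ and $Q=C^3+\lambda C^{-1}+F$.

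The next step is the differential equation for $f_1:=C_3^3F_{-4}=y^3F_{-4}$. As in~\eqref{F casi esta en K[y]}--\eqref{F con cunmutador}, the expansion $Q=C^3+\lambda C^{-1}+F$ gives $Q^2-P^3-2\lambda P=2C^3F$ plus terms of strictly smaller $v_{1,0}$, and comparing $\ell_{1,0}$ of the two sides of $[P,Q^2]=2xQ=[P,\,Q^2-P^3-2\lambda P]$ yields $x^{10}C_3^3=2C_3x^3[x^3C_3,x^5f_1]$, which reduces to $C_3^2=6C_3f_1'-10C_3'f_1$, i.e.\ $y^2=6yf_1'-10f_1$; its only polynomial solution is $f_1=\tfrac12y^2$. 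Hence $F_{-4}=\tfrac1{2y}$ and $f:=C_3^2F_{-4}=\tfrac12y\in K[y]$ is separable of degree $1$ and divisible by $y$, the analogue of the separable degree-$6$ polynomial obtained in Section~\ref{seccion 4}.

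Finally I would introduce the polynomials $D_k:=C_kC_3^{5-2k}=C_ky^{5-2k}\in K[y]$, put $D:=\sum_{k\le 3}x^kD_k$, and convert the vanishing conditions $P_{-k}=0$ together with the few needed $Q_{-k}=0$ into a polynomial system in the $D_k$, just as for~\eqref{ecuaciones polinomiales nuevas}. After the shift automorphism $x\mapsto x-D_2$, which preserves the two auxiliary valuations (the replacements for $v_{-13,-1}$ and $v_{17,1}$), a CAS eliminates every variable except $d_1,d_0,d_{-1}$ and returns a principal equation analogous to~\eqref{ecuacion principal}; substituting $C_3=y$ and $F_{-4}=\tfrac1{2y}$, together with the degree bounds on $d_1,d_0,d_{-1}$ and the powers of $y$ dividing them obtained by recomputing the analogues of $v_{-13,-1}(D)$ and $v_{17,1}(D)$ for the present polygon, one factors out the common power of $y$ and is left with a polynomial identity in $y$ that fails either by a degree count or by a multiplicity count at $y=0$, exactly as in the final paragraph of the proof of Theorem~\ref{teorema impossible}. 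The delicate point is this last step: choosing the two weight vectors so that the resulting bounds on $d_1,d_0,d_{-1}$ are sharp enough to contradict the principal equation, and organizing the elimination so that the principal equation is simple enough to exhibit that contradiction.
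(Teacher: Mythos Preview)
Your plan tracks the paper closely up to and including the derivation of the principal equation, and your computation of $F_{-4}=\tfrac1{2y}$ is correct (the paper gets it in one line from $[x^6y^2,F_{-4}x^{-4}]=x$, but your differential-equation route gives the same thing). The paper actually takes $D_k:=C_ky^{2-k}$ rather than $C_ky^{5-2k}$, but that is only a normalization.

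The genuine gap is in your last paragraph. If you follow the Section~\ref{seccion 4} template literally---using only $(\tilde D^2)_{-1},\dots,(\tilde D^2)_{-5},(\tilde D^2)_{-7}$ and $(\tilde Q)_{-1},(\tilde Q)_{-2},(\tilde Q)_{-4}$---the elimination gives (in the paper's normalization)
\[
9\,y^{9}\,d_{1}\,d_{-1}^{6}+y^{27}+27\,d_{0}\,d_{-1}^{9}=0,
\]
which forces $d_{-1}=\alpha y^{k}$. But here there is no second irreducible factor playing the role of $g(y)$ in~\eqref{nueva ecuacion principal}, and the available bounds ($\deg d_{1}\le 6$, $\deg d_{0}\le 9$) leave $k=2$ perfectly consistent: the equation then merely expresses $d_{0}$ in terms of $d_{1}$. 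So neither a degree count nor a multiplicity count at $y=0$ yields a contradiction, contrary to what you assert.

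The paper resolves this by departing from the Section~\ref{seccion 4} script exactly at this point: it also uses $(\tilde D^2)_{-6}$ and, crucially, the $\lambda$-equation $(\tilde Q)_{-3}$, which was deliberately \emph{not} used in Section~\ref{seccion 4}. From the enlarged system one extracts the auxiliary relations $3d_{-1}^{2}d_{-2}=y^{9}$ and $2d_{-3}y^{9}=3d_{0}d_{-1}^{4}$, solves for $d_{1},d_{-2},d_{-3}$ in terms of $d_{0}$ and $\alpha$ (with $k=2$), and substitutes into the combination $2(\tilde D^{3})_{-3}-3\bigl((\tilde D^{2})_{-6}+d_{1}(\tilde D^{2})_{-4}+d_{0}(\tilde D^{2})_{-3}+d_{-1}(\tilde D^{2})_{-2}+d_{-2}(\tilde D^{2})_{-1}\bigr)=0$. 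This produces
\[
\frac{(27\alpha^{9}d_{0}-2y^{9})^{2}}{108\,\alpha^{10}}=\alpha^{3}y^{8}-2\lambda y^{10},
\]
and the contradiction is that the right-hand side is not a square in $K[y]$ (since $\alpha,\lambda\ne 0$), not a degree or multiplicity argument. Your outline needs to incorporate $(\tilde Q)_{-3}$ and this non-square endgame.
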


So we assume that such a pair exists, and we will  arrive at a contradiction.
By a linear change of variables we may assume that $\ell_{1,0}(P)=x^6 y^2$ and $\ell_{1,0}(Q)=x^9 y^3$

\begin{proposition}\label{calculo de C para 7}
 There exist $C,F\in K[y,y^{-1}]((x^{-1}))$
with $v_{1,0}(F)=-4$ and $\alpha_2,\alpha_1,\alpha_0,\alpha_{-1}\in K$ such that
  \begin{enumerate}
    \item $P=C^2$ and $Q=C^3 +\alpha_2 C^2+\alpha_1 C+\alpha_0+\alpha_{-1}C^{-1}+F$,
    \item $\ell_{1,0}(C)=x^3 y$,
    \item $v_{2,1}(C)=v_{2,1}(3,1)=7$, if we consider $K[y,y^{-1}]\subset K((y^{-1}))$,
  \item $v_{1,-1}(C)=v_{1,-1}(3,1)=2$, if we consider $K[y,y^{-1}]\subset K((y))$.
  \end{enumerate}
\end{proposition}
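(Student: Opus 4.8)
The plan is to follow the proof of Proposition~\ref{calculo de C} essentially verbatim, with $C_3=y$ in the role of the old $C_3=y^8(y+1)$; because the new $C_3$ is a monomial, the ring $K[y,y^{-1}]$ and all the valuation estimates become cleaner than there. First I would build
\[
C=x^3C_3+x^2C_2+xC_1+C_0+x^{-1}C_{-1}+\cdots,\qquad C_k\in K[y,y^{-1}],
\]
inductively so that $C^2=P$. One puts $C_3:=y$, which is forced because the normalization $\ell_{1,0}(P)=x^6y^2$ means that the coefficient $P_6$ of $x^6$ in $P$ equals $y^2$, so $(x^3C_3)^2=x^6y^2=x^6P_6$. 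If $C_3,\dots,C_{3-k+1}$ have already been constructed, the only way $C_{3-k}$ enters the coefficient of $x^{6-k}$ in $C^2$ is through the term $2C_3C_{3-k}x^{6-k}$, so imposing $(C^2)_{6-k}=P_{6-k}$ forces, exactly as in the proof of Proposition~\ref{calculo de C},
\[
C_{3-k}=\frac{1}{2C_3}\Bigl(P_{6-k}-\sum_{j=1}^{k-1}C_{3-j}C_{3-k+j}\Bigr)\in K[y,y^{-1}].
\]
This yields $C$ with $C^2=P$ and $\ell_{1,0}(C)=x^3y$, giving item~(2) and the first half of item~(1).

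Next I would prove items~(3) and~(4) by the same inductive scheme, carrying along the hypotheses $v_{2,1}(x^{3-j}C_{3-j})\le 7$ (after embedding $K[y,y^{-1}]\subset K((y^{-1}))$) and $v_{1,-1}(x^{3-j}C_{3-j})\le 2$ (after embedding $K[y,y^{-1}]\subset K((y))$). The base cases hold since $v_{2,1}(x^3y)=7$ and $v_{1,-1}(x^3y)=2$, and since $x^3y=\ell_{1,0}(C)$ is never cancelled in $C$, these become the equalities $v_{2,1}(C)=7$ and $v_{1,-1}(C)=2$ asserted in~(3) and~(4). For the inductive step of~(3) one uses $v_{2,1}(C_3^{-1})=v_{2,1}(y^{-1})=-1$, the inductive bound $v_{2,1}(C_{3-j})\le 1+2j$ (whence $v_{2,1}(C_{3-j}C_{3-k+j})\le 2+2k$), and $v_{2,1}(P)=14$ -- attained on the $(2,1)$-edge of $N(P)$ joining the corners $(6,2)$ and $(0,14)$ -- which gives $v_{2,1}(P_{6-k})\le 2+2k$; the recursion then yields $v_{2,1}(C_{3-k})\le -1+(2+2k)=1+2k$, i.e.\ $v_{2,1}(x^{3-k}C_{3-k})\le 7$. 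The step for~(4) is entirely parallel, with $v_{1,-1}(C_3^{-1})=v_{1,-1}(y^{-1})=1$, $v_{1,-1}(C_{3-j})\le j-1$, $v_{1,-1}(C_{3-j}C_{3-k+j})\le k-2$, and $v_{1,-1}(P)=4$ -- attained on the $(1,-1)$-edge of $N(P)$ joining $(4,0)$ and $(6,2)$ -- so that $v_{1,-1}(P_{6-k})\le k-2$, hence $v_{1,-1}(C_{3-k})\le 1+(k-2)=k-1$ and $v_{1,-1}(x^{3-k}C_{3-k})\le 2$.

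Finally I would produce $F$ exactly as in Proposition~\ref{calculo de C}. Since $\ell_{1,0}(Q)=x^9y^3=\ell_{1,0}(C^3)$ we have $v_{1,0}(Q-C^3)<9$; if moreover $v_{1,0}(Q-C^3)>-4$, then
\[
v_{1,0}(Q-C^3)+v_{1,0}(P)-v_{1,0}(1,1)>-4+6-1=1=v_{1,0}(x)=v_{1,0}\bigl([Q-C^3,P]\bigr),
\]
so by~\cite{GGV1}*{Proposition 1.13} $[\ell_{1,0}(P),\ell_{1,0}(Q-C^3)]=0$; since $x^3y$ is not a proper power in $K[x,y]$ (as $\gcd(3,1)=1$), \cite{GGV1}*{Proposition 2.1} forces $\ell_{1,0}(Q-C^3)=\alpha_k(x^3y)^k$ for some $k$ with $-2<k<3$. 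Running the argument of~\cite{GGV3}*{Section~1}, one removes in turn the terms $\alpha_2C^2,\alpha_1C,\alpha_0,\alpha_{-1}C^{-1}$ until the bracket with $\ell_{1,0}(P)$ is nonzero; then $F:=Q-C^3-\alpha_2C^2-\alpha_1C-\alpha_0-\alpha_{-1}C^{-1}$ satisfies, again by~\cite{GGV1}*{Proposition 1.13},
\[
v_{1,0}(F)+v_{1,0}(P)-v_{1,0}(1,1)=v_{1,0}\bigl([F,P]\bigr)=1,
\]
whence $v_{1,0}(F)=-4$, completing item~(1). I do not anticipate a genuine obstacle: the whole argument is a transcription of the $(9,27)$ case, and the only point to watch is keeping straight which of the two edges of $N(P)$ meeting at the corner $(6,2)$ carries the normal $(2,1)$ and which carries the normal $(1,-1)$, so that the valuation bookkeeping in items~(3) and~(4) comes out with the right constants.
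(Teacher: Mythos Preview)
Your proposal is correct and follows essentially the same approach as the paper's own proof: the same inductive construction of $C_{3-k}$ via the recursion $C_{3-k}=\frac{1}{2y}\bigl(P_{6-k}-\sum_{j=1}^{k-1}C_{3-j}C_{3-k+j}\bigr)$, the same valuation bookkeeping for $v_{2,1}$ and $v_{1,-1}$ with the same constants, and the same argument for $F$ via \cite{GGV1}*{Propositions~1.13 and~2.1} and \cite{GGV3}*{Section~1}. Your explicit identification of the edges of $N(P)$ supporting the bounds $v_{2,1}(P)=14$ and $v_{1,-1}(P)=4$ is a welcome clarification that the paper leaves implicit.
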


\begin{proof}
We will construct inductively
$$
C=x^3 y +x^2 C_2+x C_1 +C_0+ x^{-1}C_{-1}+\dots,
$$
with $C_k\in K[y,y^{-1}]$ such that $C^2=P$ and such that items~(3) and~(4) are satisfied. It suffices to find $C_{3-k}$ for $k\ge 0$ such that
\begin{enumerate}
  \item[$a_k$)] $\left(\left(\sum_{i=0}^{k} C_{3-i}x^{3-i}\right)^2\right)_{6-j}=P_{6-j}$ for $j=0,\dots k$,
  \item[$b_k$)] $v_{2,1}(x^{3-j} C_{3-j})\le 7$ for $j=0,\dots, k$, if we consider $K[y,y^{-1}]\subset K((y^{-1}))$, and
  \item[$c_k$)] $v_{1,-1}(x^{3-j} C_{3-j})\le 2$ for $j=0,\dots, k$, if we consider $K[y,y^{-1}]\subset K((y))$.
\end{enumerate}

We already have
\begin{enumerate}
  \item[$a_0$)] $(x^3 y)^2=x^6 P_6$,
  \item[$b_0$)] $v_{2,1}(x^3 y)=v_{2,1}(3,1)=6$ and
  \item[$c_0$)] $v_{1,-1}(x^3 y)=v_{1,-1}(3,1)=2$.
\end{enumerate}
 Assume that we have constructed $C_3,\dots,C_{3-k+1}\in K[y,y^{-1}]$ such that
\begin{enumerate}
  \item[$a_{k-1}$)] $\left(\left(\sum_{i=0}^{k-1} C_{3-i}x^{3-i}\right)^2\right)_{6-j}=P_{6-j}$ for $j=0,\dots k-1$,
  \item[$b_{k-1}$)] $v_{2,1}(x^{3-j} C_{3-j})\le 7$  for $j=0,\dots, k-1$, if we consider $K[y,y^{-1}]\subset K((y^{-1}))$, and
  \item[$c_{k-1}$)] $v_{1,-1}(x^{3-j} C_{3-j})\le 2$  for $j=0,\dots, k-1$, if we consider $K[y,y^{-1}]\subset K((y))$.
\end{enumerate}
Then we want to find $C_{3-k}$ such that
$$
\left(\left(\sum_{i=0}^{k} C_{3-i}x^{3-i}\right)^2\right)_{6-j}=P_{6-j}\quad\text{for}\quad j=0,\dots, k.
$$
But the term $2C_{3-k}x^{3-k} y x^3=2C_{3-k} y x^{6-k}$ contains the highest power of $x$ whose coefficient
is touched by $C_{3-k}$. Hence it suffices to guarantee
$$
P_{6-k}=\left(\left(\sum_{i=0}^{k} C_{3-i}x^{3-i}\right)^2\right)_{6-k}=\sum_{j=0}^{k}C_{3-j}C_{3-k+j}.
$$
So we want
$$
2y C_{3-k}=P_{6-k}-\sum_{j=1}^{k-1}C_{3-j}C_{3-k+j}
$$
and it suffices to set
\begin{equation}\label{ecuacion de C 3-k para 7}
C_{3-k}:=\frac{1}{2 y}\left( P_{6-k}-\sum_{j=1}^{k-1}C_{3-j}C_{3-k+j}\right)\in K[y,y^{-1}],
\end{equation}
and we obtain that $a_k$) is satisfied.

If we consider $K[y,y^{-1}]\subset K((y^{-1}))$, we have $v_{2,1}(y^{-1})=-1$,
and by induction hypothesis we also have $v_{2,1}(C_{3-j}x^{3-j})\le 7$, hence $v_{2,1}(C_{3-j})+6-2j\le 7$, and so $v_{2,1}(C_{3-j})\le 1+2j$ for $j=1,\dots ,k-1$.
This implies
$$
v_{2,1}(C_{3-j}C_{3-k+j})\le  v_{2,1}(C_{3-j}) v_{2,1}(C_{3-k+j})\le 1+2j+1+2(k-j)=2+2k.
$$
We also have $v_{2,1}(P_{6-k}x^{6-k})\le 14$, hence $v_{2,1}(P_{6-k})+12-2k\le 14$, and so $v_{2,1}(P_{6-k})\le 2+2k$.
Now from ~\eqref{ecuacion de C 3-k para 7} it follows that
$$
v_{2,1}(C_{3-k})\le -v_{2,1}(y)+v_{2,1}\left( P_{6-k}+\sum_{j=1}^{k-1}C_{3-j}C_{3-k+j}\right)\le -1+2+2k=1+2k,
$$
which implies $v_{2,1}(C_{3-k}x^{3-k})\le 7$ and so~$b_k$) holds.

Similarly, if we consider $K[y,y^{-1}]\subset K((y))$, we have $v_{1,-1}(y^{-1})=1$,
and by induction hypothesis we also have $v_{1,-1}(C_{3-j}x^{3-j})\le 2$, hence $v_{1,-1}(C_{3-j})+3-j\le 2$, and so $v_{1,-1}(C_{3-j})\le j-1$ for $j=1,\dots ,k-1$.
This implies
$$
v_{1,-1}(C_{3-j}C_{3-k+j})\le  v_{1,-1}(C_{3-j}) v_{1,-1}(C_{3-k+j})\le j-1+ (k-j)-1=k-2
$$
We also have $v_{1,-1}(P_{6-k}x^{6-k})\le 4$, hence $v_{1,-1}(P_{6-k})+6-k\le 4$, and so $v_{1,-1}(P_{6-k})\le k-2$.
Now from ~\eqref{ecuacion de C 3-k para 7} it follows that
$$
v_{1,-1}(C_{3-k})\le v_{1,-1}(y^{-1})+v_{1,-1}\left( P_{6-k}+\sum_{j=1}^{k-1}C_{3-j}C_{3-k+j}\right)\le 1+k-2=k-1,
$$
which implies $v_{1,-1}(C_{3-k}x^{3-k})\le 2$ and so~$c_k$) holds.

In order to finish the proof it suffices to find $F\in K[y,y^{-1}]((x^{-1}))$
with $v_{1,0}(F)=-4$ such that
\begin{equation}\label{igualdad para Q}
Q=C^3 +\alpha_2 C^2+\alpha_1 C+\alpha_0+\alpha_{-1}C^{-1}+F.
\end{equation}

Since $\ell_{1,0}(Q)=\ell_{1,0}(C^3)$ we have  $v_{1,0}(Q-C^3)<9$. If $v_{1,0}(Q-C^3)>-4$, then
$$
v_{1,0}(Q-C^3)+v_{1,0}(P)-v_{1,0}(1,1)>-4+6-1=1=v_{1,0}(x)= v_{1,0}([Q-C^3,P])
$$
which by~\cite{GGV1}*{Proposition 1.13} implies $[\ell_{1,0}(P), \ell_{1,0}(Q-C^3)]=0$, and so, by~\cite{GGV1}*{Proposition 2.1} and the fact that $x^3 y$ is not the positive
power of any element of $K[x,y]$, we have
$$
\ell_{1,0}(Q-C^3)=\alpha_k (x^3 y)^k,
$$
for some $k$ with $-2<k<3$. Using the arguments of \cite{GGV3}*{Section 1}, we find $\alpha_{2},\alpha_{1},\alpha_{0},\alpha_{-1}\in K$ such that
$$
[\ell_{1,0}(P), \ell_{1,0}(Q-C^3-\alpha_2 C^2-\alpha_1 C-\alpha_0-\alpha_{-1}C^{-1})]\ne 0,
$$
and so, again by~\cite{GGV1}*{Proposition 1.13} we know that  $F:=Q-C^3-\alpha_2 C^2-\alpha_1 C-\alpha_0-\alpha_{-1}C^{-1}$ satisfies
$$
v_{1,0}(F)+v_{1,0}(P)-v_{1,0}(1,1)=v_{1,0}([F,P]),
$$
which implies $v_{1,0}(F)=-4$ and concludes the proof.
\end{proof}

\begin{remark}
  Note that if we replace $Q$ by $\tilde Q := Q - \alpha_2 P - \alpha_0$ and $P$ by $\tilde P:= P+\frac 23 \alpha_1$, then
$$
\tilde Q = C^3 +  \alpha_1 C +  \lambda C^{-1} + F.
$$
Moreover $\tilde C$ with $\tilde C^2=\tilde P$ is given by a series
$$
\tilde C=C+\frac 13 \alpha_1 C^{-1}+\gamma_3 C^{-3}+\gamma_5 C^{-5}+\dots
$$
and so
$$
\tilde C^3=C^3+\alpha_1 C+\tilde\gamma_1 C^{-1}+\tilde\gamma_3 C^{-3}+\dots.
$$
It follows that
$$
\tilde Q=\tilde C^3+\lambda \tilde C^{-1}+\tilde F,\quad\text{for some $\lambda\in K$}.
$$
Since  $\tilde P$, $\tilde Q$ satisfy the conditions of Theorem~\ref{teorema impossible}, we can and will assume that $\alpha_2,\alpha_1,\alpha_0$ vanish in Proposition~\ref{calculo de C}.
\end{remark}

By definition $F_{-4}\in K[y,y^{-1}]$. From
$$
x=[\ell_{1,0}(P),\ell_{1,0}(F)]=[x^6 y^2,F_{-4}x^{-4}]
$$
it follows immediately that $F_{-4}=\frac 12 y^{-1}$.

The equalities
\begin{equation}\label{ecuaciones basicas para 7}
C^2=P\quad\text{and}\quad Q=C^3+\lambda C^{-1}+F
\end{equation}
 yield a system of polynomial equations for $C_k$, similar to the systems of~\cite{GGV3}, which correspond to
the equalities
\begin{equation}\label{ecuaciones polinomiales para 7}
P_{-k}=0\quad\text{for}\quad k=1,\dots,8\quad\text{and}\quad Q_{-k}=0\quad\text{for}\quad k=1,\dots 5.
\end{equation}

However $C_k$ in general is not a polynomial, so we will transform the system into a system for certain $D_k$'s which are polynomials.

\begin{proposition}
Set $D_k:=C_k y^{2-k}$. Then $D_k\in K[y]$.
\end{proposition}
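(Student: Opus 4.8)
The plan is to imitate the analogous proposition in the case $(9,27)$ (the one setting $D_k=C_kC_3^{5-2k}$), but here the argument is much shorter: since $C_3=y$, the only pole $C_k$ can have is at $y=0$, and the order of that pole is already controlled by the valuation estimates of Proposition~\ref{calculo de C para 7}.

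First I would dispose of the boundary indices. By the construction of $C$ we have $C_3=y$, so $D_3=C_3y^{-1}=1\in K[y]$, and $C_k=0$ for $k>3$, so $D_k=0\in K[y]$ there. For $k\le 2$ the key input is Proposition~\ref{calculo de C para 7}(4) (equivalently, the conditions $c_k$) from its proof), which, viewing $K[y,y^{-1}]\subset K((y))$, gives $v_{1,-1}(x^{k}C_k)\le 2$. Since $v_{1,-1}(x^{a}y^{b})=a-b$, this says exactly that $C_k$ has no monomial $y^{i}$ with $i<k-2$, i.e. $\ord_y(C_k)\ge k-2$. As $2-k\ge 0$ when $k\le 2$, the Laurent polynomial $D_k=C_ky^{2-k}$ then has $\ord_y(D_k)\ge 0$; and since $C_k\in K[y,y^{-1}]$ has only finitely many terms (Proposition~\ref{calculo de C para 7}(3) even bounds $\deg_y C_k\le 7-2k$), we conclude $D_k\in K[y]$, with $\deg D_k\le 9-3k$.

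A second route, closer to the bookkeeping used in the $(9,27)$ case, is a downward induction on $k$ from the recursion $2yC_{3-k}=P_{6-k}-\sum_{j=1}^{k-1}C_{3-j}C_{3-k+j}$ obtained inside the proof of Proposition~\ref{calculo de C para 7}: multiplying by $y^{k-2}$ and using the identity $D_iD_j=C_iC_jy^{4-i-j}$, every cross term becomes a product $D_{3-j}D_{3-k+j}$ of polynomials of strictly larger index, while $P_{6-k}y^{k-2}$ stays a polynomial once $k\ge 2$. The one nonformal point in this route is the base case, namely that $D_2=C_2=P_5/(2y)$ lies in $K[y]$, i.e. that $y\mid P_5$, which holds because $(5,0)\notin N(P)$. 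I do not expect any real obstacle: the statement is an immediate consequence of the valuation bounds of Proposition~\ref{calculo de C para 7}, the only point worth an explicit line being that $C_2$ is honestly a polynomial.
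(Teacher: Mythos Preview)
Your proposal is correct. Your second route is exactly the paper's argument: downward induction on $k$ via the recursion $2yC_{3-k}=P_{6-k}-\sum_{j=1}^{k-1}C_{3-j}C_{3-k+j}$, with the one nonformal base case $D_2=P_5/(2y)\in K[y]$ handled in the paper by the valuation bound $v_{1,-1}(P_5x^5)\le 4$ (equivalent to your observation that $(5,0)$ lies outside $N(P)$). Your first argument, on the other hand, is a genuine shortcut over the paper: since the conditions $c_k$) in the proof of Proposition~\ref{calculo de C para 7} already establish $v_{1,-1}(x^kC_k)\le 2$ for every $k\le 3$, one reads off $\ord_y C_k\ge k-2$ directly, and the induction becomes unnecessary. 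The paper's inductive route buys nothing extra here---the recursive identity $2D_k=P_{k+3}y^{1-k}-\sum D_jD_{k+3-j}$ it derives is not reused afterwards---so your direct valuation argument is simply cleaner.
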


\begin{proof}
  By definition $D_3=1$ and $D_2=C_2 =\frac 1{2y} P_5$. But $v_{1,-1}(P_5 x^5)\le 4$, hence $v_{1,-1}(P_5)\le -1$, which implies that
  $y\mid P_5$. So $D_2\in K[y]$.
  Now assume that $D_j\in K[y]$ for $j>k\le 1$. Then
  $$
  P_{k+3}=\sum_{j=k}^{3} C_j C_{k+3-j}=2y C_k+\sum_{j=k+1}^{2} C_j C_{k+3-j}.
  $$
  Note that $P_k\in K[y]$ for all $k$. In fact, for $k<0$, $P_k=0\in K[y]$.
  Then
  $$
  D_k=y^{2-k}C_k=\frac 12\left(P_{k+3}-\sum_{j=k+1}^{2} C_j C_{k+3-j} \right)y^{1-k}=\frac 12 P_{k+3}y^{1-k}-\sum_{j=k+1}^{2} D_j D_{k+3-j},
  $$
  since
  $$
  D_j D_{k+3-j}=C_j C_{k+3-j}y^{2-j}C_3^{2-(k+3-j)}=C_j C_{k+3-j}y^{4-3-k}.
  $$
  This shows inductively that $D_k\in K[y]$.
\end{proof}
\begin{proof}[Proof of Theorem~\ref{teorema impossible para 7}:]
If we set $D:=\sum_{k\le 3} x^k D_k$, then
$$
(D^2)_{-k}=\sum_{\overset{i,j\le 3}{i+j=-k}}D_i D_j=\sum_{\overset{i,j\le 3}{i+j=-k}}C_i y^{2-i}C_j y^{2-j}=\sum_{\overset{i,j\le 3}{i+j=-k}}C_i C_j y^{4-(i+j)}=(C^2)_{-k}y^{4+k}
$$
and
$$
(D^3)_{-k}=\sum_{\overset{i,j,l\le 3}{i+j+l=-k}}D_i D_jD_l=\sum_{\overset{i,j,l\le 3}{i+j+l=-k}}C_i C_jC_l y^{6-(i+j+l)}=(C^3)_{-k} y^{6+k}.
$$
Now the equalities~\eqref{ecuaciones polinomiales para 7} imply that the following equalities
\begin{equation}\label{ecuaciones polinomiales nuevas para 7}
(D^2)_{-k}=0\quad\text{for}\quad k=1,\dots,8,\quad (D^3)_{-1}=0,\quad (D^3)_{-2}=0,
\end{equation}
$$
(D^3)_{-3}+\lambda y^8=0\quad\text{and}\quad  (D^3)_{-4}-\lambda D_2 y^8+F_{-4}y^{10}=0
$$
hold. Note that we don't use the equality $Q_{-5}=0$.

So there exists $D,\hat F\in K[y]((x^{-1}))$, such that $\ord_{x^{-1}}(\hat F)\ge 5$, $\hat P:= D^2\in K[x,y]$ and
$$
\hat Q:=D^3+D^{-1}\lambda y^8 +x^{-4}F_{-4}y^{10}+\hat F\in K[x,y].
$$
In fact, $P=C^2\in K[x,y]$ implies that $(C^2)_{-k}=0$ for all $k>0$, hence $(D^2)_{-k}=0$ for all $k>0$, so $\hat P\in K[x,y]$;
on the other hand the equalities~\eqref{ecuaciones polinomiales nuevas para 7} imply that
$(\hat Q)_{-k}=0$ for $k=1,2,3,4$. Now set
$$
(\hat F)_{-k}:=-(D^3+D^{-1}\lambda y^8+F_{-4} y^{10})_{-k}\quad\text{for $k\ge 5$}
$$
which implies that $\hat Q\in K[x,y]$, as desired.

Consider the automorphism $\varphi$ of $K[y]((x^{-1}))$ given by $\varphi(y)=y$, $\varphi(x)=x-D_2$ (and so $\varphi(x^{-1})=x^{-1}+x^{-2}D_2+x^{-3}D_2^2+x^{-4}D_2^3+\dots$).
Then $\varphi(\hat P),\varphi(\hat Q)\in K[x,y]$, and if we set $\tilde D:=\varphi(D)$, then
$$
\tilde D=x^3+x d_1 +d_0+x^{-1}d_{-1}+x^{-2}d_{-2}+\dots
$$
for some $d_i\in K[y]$ (note that $d_2=0$), and the conditions $\varphi(\hat P),\varphi(\hat Q)\in K[x,y]$ imply the equalities
$$
(\tilde D^2)_{-k}=0\quad\text{for}\quad k=1,\dots,8\quad\text{and}\quad (\tilde D^3+\tilde D^{-1}\lambda y^8+x^{-4}F_{-4}y^{10})_{-j}=0\quad\text{for}\quad j=1,\dots 4.
$$
Note that we can take $x^{-4}$ instead of $\varphi(x^{-4})$ and that we can ignore the term $\varphi(\hat F)$, since we only consider $j\le 4$.
So we obtain the following equalities:
\begin{align*}
  0=(\tilde D^2)_{-1} & = 2 d_0 d_{-1} + 2 d_1 d_{-2} + 2 d_{-4}\\
0=(\tilde D^2)_{-2} &= d_{-1}^2 + 2 d_0 d_{-2} + 2 d_1 d_{-3} + 2 d_{-5}\\
0=(\tilde D^2)_{-3} &= 2 d_{-1} d_{-2} + 2 d_0 d_{-3} + 2 d_1 d_{-4} + 2 d_{-6}\\
0=(\tilde D^2)_{-4} &= d_{-2}^2 + 2 d_{-1} d_{-3} + 2 d_0 d_{-4} + 2 d_1 d_{-5} + 2 d_{-7}\\
0=(\tilde D^2)_{-5} &= 2 d_{-2} d_{-3} + 2 d_{-1} d_{-4} + 2 d_0 d_{-5} + 2 d_1 d_{-6} + 2 d_{-8}\\
0=(\tilde D^2)_{-6} &= d_{-3}^2 + 2 d_{-2} d_{-4} + 2 d_{-1} d_{-5} + 2 d_0 d_{-6} + 2 d_1 d_{-7} + 2 d_{-9}\\
0=(\tilde D^2)_{-7} &= 2 d_{-1}0 + 2 d_{-3} d_{-4} + 2 d_{-2} d_{-5} + 2 d_{-1} d_{-6} + 2 d_0 d_{-7} +  2 d_1 d_{-8}\\
0=(\tilde D^2)_{-8} &= 2 d_{-11} + d_{-4}^2 + 2 d_{-3} d_{-5} + 2 d_{-2} d_{-6} + 2 d_{-1} d_{-7} + 2 d_0 d_{-8} + 2 d_1 d_{-9}\\
0=(\tilde Q)_{-1}&= 3 d_0^2 d_{-1} + 3 d_1 d_{-1}^2 + 6 d_0 d_1 d_{-2} + 3 d_{-2}^2 + 3 d_1^2 d_{-3} + 6 d_{-1} d_{-3} + 6 d_0 d_{-4} + 6 d_1 d_{-5}\\
&\quad + 3 d_{-7}\\
0=(\tilde Q)_{-2}&= 3 d_0 d_{-1}^2 + 3 d_0^2 d_{-2} + 6 d_1 d_{-1} d_{-2} + 6 d_0 d_1 d_{-3} +  6 d_{-2} d_{-3} + 3 d_1^2 d_{-4} + 6 d_{-1} d_{-4}\\
&\quad + 6 d_0 d_{-5} + 6 d_1 d_{-6} +  3 d_{-8}\\
0=(\tilde Q)_{-3}&= d_{-1}^3 + 6 d_0 d_{-1} d_{-2} + 3 d_1 d_{-2}^2 + 3 d_0^2 d_{-3} + 6 d_1 d_{-1} d_{-3} + 3 d_{-3}^2 + 6 d_0 d_1 d_{-4} + 6 d_{-2} d_{-4}\\
 &\quad + 3 d_1^2 d_{-5} + 6 d_{-1} d_{-5} + 6 d_0 d_{-6} + 6 d_1 d_{-7} + 3 d_{-9}+\lambda y^8\\
0=(\tilde Q)_{-4}&= 3 d_{-10} + 3 d_{-1}^2 d_{-2} + 3 d_0 d_{-2}^2 + 6 d_0 d_{-1} d_{-3} + 6 d_1 d_{-2} d_{-3} +  3 d_0^2 d_{-4} + 6 d_1 d_{-1} d_{-4}\\
&\quad + 6 d_{-3} d_{-4} + 6 d_0 d_1 d_{-5} + 6 d_{-2} d_{-5} +  3 d_1^2 d_{-6} + 6 d_{-1} d_{-6} + 6 d_0 d_{-7} + 6 d_1 d_{-8}\\
&\quad + F_{-4}y^{10}.
\end{align*}
We use that $F_{-4}=\frac 12 y^{-1}$ and we consider this as a system of 9 equations (excluding $(\tilde D^2)_{-8}$, $(\tilde D^2)_{-6}$ and $(\tilde D^3)_{-3}$ )
 and using a CAS (for example Mathematica) we eliminate the variables $d_{-10}, d_{-8}, d_{-7}, d_{-6}, d_{-5}, d_{-4}, d_{-3}, d_{-2}$, obtaining

\begin{equation}\label{ecuacion principal para 7}
  9 y^9 d_1 (d_{-1})^6 +  y^{27} + 27 d_0 (d_{-1})^9=0.
\end{equation}
It follows that $d_{-1}=\alpha y^k$  for some $k\in \mathds{N}_0$ and $\alpha\in K^{\times}$.
A straightforward computation using $\deg_y(d_0)=9$ and $\deg(d_1)=6$, shows that $k=2$.

From the system of equations we also obtain
$$
2 d_{-3} y^9 = 3 d_0 (d_{-1})^4\quad\text{and}\quad  3 (d_{-1})^2 d_{-2} = y^9,
$$
which, together with~\eqref{ecuacion principal para 7}, yield
\begin{align*}
  d_1 & =\frac{-27 d_0 (d_{-1})^9 - y^{27}}{9 (d_{-1})^6 y^9}, \\
  d_{-2} & = \frac{y^9}{3(d_{-1})^2},\\
  d_{-3} & = \frac{3 d_0 (d_{-1})^4}{2 y^9}.
\end{align*}

Finally, inserting these values in
$$
0=-(d_{-1})^3 - 6 d_0 d_{-1} d_{-2} - 3 d_1 (d_{-2})^2 + 3 (d_{-3})^2 + 2 \lambda y^8,
$$
which comes from
$$
0=2(\tilde D^3)_{-3}-3 \left((\tilde D^2)_{-6} + d_1(\tilde D^2)_{-4} + d_0(\tilde D^2)_{-3} + d_{-1}(\tilde D^2)_{-2} + d_{-2} (\tilde D^2)_{-1}\right),
$$
we obtain
$$
\frac{(27 \alpha^9 d_0 - 2 y^9)^2}{108 \alpha^{10}}=\alpha^3 y^8 - 2 \lambda y^{10},
$$
which yields the desired contradiction, since the right hand side is not the square of a polynomial, because $\alpha,\lambda\ne 0$.
\end{proof}

\begin{bibdiv}
\begin{biblist}

\bib{GGV1}{article}{
author={Guccione, Jorge Alberto},
author={Guccione, Juan Jos\'e},
author={Valqui, Christian},
   title={On the shape of possible counterexamples to the Jacobian
   Conjecture},
   journal={J. Algebra},
   volume={471},
   date={2017},
   pages={13--74},
   issn={0021-8693},
}

\bib{GGV2}{article}{
author={Guccione, Jorge Alberto},
author={Guccione, Juan Jos\'e},
author={Valqui, Christian},
   title={The Two-Dimensional Jacobian Conjecture and the
Lower Side of the Newton Polygon},
   eprint={arXiv:1605.09430},
   }

   \bib{GGV3}{article}{
author={Guccione, Jorge Alberto},
author={Guccione, Juan Jos\'e},
author={Valqui, Christian},
   title={A system of polynomial equations related to the Jacobian Conjecture},
   eprint={	arXiv:1406.0886},
}

   \bib{GGV4}{article}{
author={Guccione, Jorge Alberto},
author={Guccione, Juan Jos\'e},
author={Valqui, Christian},
   title={A Differential Equation for Polynomials related to the Jacobian Conjecture},
   journal={Pro Mathematica},
   volume={27, Num 53-54},
   date={2013},
   pages={83--98},
   issn={1012-3938}
}

\bib{GGV5}{article}{
author={Guccione, Jorge Alberto},
author={Guccione, Juan Jos\'e},
author={Horruitiner, Rodrigo},
author={Valqui, Christian},
   title={Some algorithms related to the Jacobian Conjecture},
   eprint={arXiv:1708.07936}
}

\bib{GGV6}{article}{
author={Guccione, Jorge Alberto},
author={Guccione, Juan Jos\'e},
author={Horruitiner, Rodrigo},
author={Valqui, Christian},
   title={The Jacobian Conjecture: Approximate roots and intersection numbers},
   journal={Pro Mathematica},
   volume={30, Num 60},
   date={2019},
   pages={51--89},
   issn={2305-2430}}

\bib{H}{article}{
   author={Heitmann, R.},
   title={On the Jacobian conjecture},
   journal={Journal of Pure and Applied Algebra},
   volume={64},
   date={1990},
   pages={35--72},
   issn={0022-4049},
   review={\MR{1055020 (91c :14018)}},
}

\bib{K}{article}{
   author={Keller, Ott-Heinrich},
   title={Ganze Cremona-Transformationen},
   language={German},
   journal={Monatsh. Math. Phys.},
   volume={47},
   date={1939},
   number={1},
   pages={299--306},
   issn={0026-9255},
   review={\MR{1550818}},
   doi={10.1007/BF01695502},
}

\bib{LCW}{article}{
   author={Wang, Lih-Chung},
   title={On the Jacobian conjecture},
   journal={Taiwanese J. Math.},
   volume={9},
   date={2005},
   number={3},
   pages={421--431},
   issn={1027-5487},
   review={\MR{2162887}},
}

\bib{M}{article}{
   author={Moh, T. T.},
   title={On the Jacobian conjecture and the configurations of roots},
   journal={J. Reine Angew. Math.},
   volume={340},
   date={1983},
   pages={140--212},
   issn={0075-4102},
   review={\MR{691964 (84m:14018)}},
}

\bib{O}{article}{
   author={Orevkov, S. Yu.},
   title={Counterexamples to the ``Jacobian conjecture at infinity''},
   language={Russian, with Russian summary},
   journal={Tr. Mat. Inst. Steklova},
   volume={235},
   date={2001},
   number={Anal. i Geom. Vopr. Kompleks. Analiza},
   pages={181--210},
   translation={
      journal={Proc. Steklov Inst. Math.},
      date={2001},
      number={4(235)},
      pages={173--201},
      issn={0081-5438},
   },
   review={\MR{1886583}},
}

\bib{vdE}{book}{
   author={van den Essen, Arno},
   title={Polynomial automorphisms and the Jacobian conjecture},
   series={Progress in Mathematics},
   volume={190},
   publisher={Birkh\"auser Verlag},
   place={Basel},
   date={2000},
   pages={xviii+329},
   isbn={3-7643-6350-9},
   review={\MR{1790619 (2001j:14082)}},
   doi={10.1007/978-3-0348-8440-2},
}

\bib{A}{book}{
   author={Abhyankar, S. S.},
   title={Lectures on expansion techniques in algebraic geometry},
   series={Tata Institute of Fundamental Research Lectures on Mathematics
   and Physics},
   volume={57},
   note={Notes by Balwant Singh},
   publisher={Tata Institute of Fundamental Research},
   place={Bombay},
   date={1977},
   pages={iv+168},
   review={\MR{542446 (80m:14016)}},
}

\end{biblist}
\end{bibdiv}

\end{document}